\newcommand{\lyxmathsym}[1]{\ifmmode\begingroup\def\b@ld{bold}
  \text{\ifx\math@version\b@ld\bfseries\fi#1}\endgroup\else#1\fi}
\numberwithin{equation}{section}
\numberwithin{figure}{section}
\theoremstyle{plain}
\newtheorem*{thm*}{\protect\theoremname}
\theoremstyle{plain}
\newtheorem{thm}{\protect\theoremname}
\theoremstyle{definition}
\newtheorem{defn}[thm]{\protect\definitionname}
\theoremstyle{remark}
\newtheorem{rem}[thm]{\protect\remarkname}
\theoremstyle{remark}
\newtheorem{claim}[thm]{\protect\claimname}
\theoremstyle{plain}
\newtheorem{lem}[thm]{\protect\lemmaname}
\theoremstyle{plain}
\newtheorem{prop}[thm]{\protect\propositionname}
\theoremstyle{remark}
\newtheorem*{rem*}{\protect\remarkname}
\providecommand{\claimname}{Claim}
\providecommand{\definitionname}{Definition}
\providecommand{\lemmaname}{Lemma}
\providecommand{\propositionname}{Proposition}
\providecommand{\remarkname}{Remark}
\providecommand{\theoremname}{Theorem}
\begin{document}
\global\long\def\rr{\mathbb{R}}%
\global\long\def\pp{\mathbb{P}}%
\global\long\def\diam{\text{diam}}%
\global\long\def\eff{\mathcal{F}}%
\global\long\def\capi{\text{Cap}}%
\global\long\def\nn{\mathbb{N}}%
\global\long\def\ee{\mathbb{E}}%
\global\long\def\oo{1}%
\global\long\def\half{\frac{1}{2}}%
\global\long\def\const{\textup{C}}%
\global\long\def\fourth{\frac{1}{4}}%
\global\long\def\zz{\mathbb{Z}}%
\global\long\def\sige{\mathcal{E}}%
\global\long\def\vol{\textup{Vol}}%
\global\long\def\qed{\hfill$\blacksquare$}%

\title[Bounds for the Correlation Length of the 2D RFIM]{Upper and Lower Bounds for the Correlation Length of the Two-Dimensional
Random-Field Ising Model}
\author{Yoav Bar-Nir}
\begin{abstract}
We study the rate of correlation decay in the two-dimensional random-field
Ising model at weak field strength $\varepsilon$. We combine elements
of the recent proof of exponential decay of correlations with a quantitative
refinement of a result of Aizenman--Burchard on the tortuosity of
random curves to obtain an upper bound of the form $\exp(\exp(O(1/\varepsilon^{2})))$
on the correlation length of the model at all temperatures. Conversely,
we show, by adapting methods of Fisher--Fr\"{o}hlich--Spencer, that
on square domains of side length as large as $\exp(O(1/\varepsilon^{2/3}))$
the model continues to exhibit strong dependence on boundary conditions
at low temperature.
\end{abstract}

\maketitle

\section{Overview}

The random-field Ising model (RFIM) is a prime example of a disordered
spin system. It is obtained by subjecting the standard Ising model
to a random (quenched) external magnetic field composed of independent
and identically distributed random variables. The model is described
by the formal random Hamiltonian
\begin{equation}
H^{h}(\sigma):=-J\sum_{u\sim v}\sigma_{u}\sigma_{v}-\sum_{v}(\eta+\varepsilon h_{v})\sigma_{v},\label{hamiltonian rfim}
\end{equation}
where the Ising spins $\sigma$ take values in $\{-1,1\}$, $J>0$
is the coupling strength, $\varepsilon>0$ is the disorder intensity
(or the strength of the random field), $\eta\in\rr$ is the intensity
of the homogeneous external field and $(h_{v})_{v\in\zz^{2}}$ is
the random field, which we take here to consist of independent standard
Gaussian random variables. The seminal work of Imry--Ma \cite{imrymaargument}
predicted that the addition of the random field to the \emph{two-dimensional}
Ising model causes the model to lose its ordered low-temperature phase
and become disordered at all temperatures (including zero temperature),
for every disorder intensity $\varepsilon>0$. This prediction was
given a rigorous proof in the celebrated work of Aizenman--Wehr \cite{aizenman1989roundingPRL,aizenman1990roundingCMP}.
We discuss here the question of quantifying the rate of correlation
decay, as captured by the order parameter
\begin{equation}
m(L):=\frac{1}{2}\left(\ee\left[\left\langle \sigma_{0}\right\rangle _{\Lambda(L)}^{+}\left(h_{v}\right)\right]-\ee\left[\left\langle \sigma_{0}\right\rangle _{\Lambda(L)}^{-}\left(h_{v}\right)\right]\right),\label{eq:order parameter def}
\end{equation}
where $\left\langle \sigma_{0}\right\rangle _{\Lambda(L)}^{+}\left(h_{v}\right)$
and $\left\langle \sigma_{0}\right\rangle _{\Lambda(L)}^{-}\left(h_{v}\right)$
denote the thermal expectation value of the spin at the origin for
the given realization $(h_{v})$ of the random field, when the model
is sampled in the discrete square $\Lambda(L):=\{-L,\ldots,L\}^{2}$
with $+$ or $-$ boundary conditions, respectively, and where the
operator $\ee$ denotes expectation over the values of the random
field $(h_{v})$. The order parameter controls several related notions
of correlation decay, as discussed in \cite[Section 1.2]{aizenman2019power}.

It is generally expected that at high enough disorder, be it thermal
or due to noisy environment, correlations decay exponentially fast.
Results in this vein for systems related to the RFIM can be found
in the works of Berretti \cite{berretti1985some}, Imbrie--Fr\"{o}hlich
\cite{frohlich1984improved} and Camia--Jiang--Newman \cite{camia2018note}.
The main challenge thus lies in analyzing the behavior at low temperature
and weak disorder strength. In recent years there has been major progress
in quantifying the rate of correlation decay for the two-dimensional
RFIM in the latter regime. Upper bounds were established in a series
of works \cite{chatterjee2018decay,aizenman2019power,expdecay,ding2019exponentialzerotemp,ding2019exponentialpostemp}
which culminated in a proof of exponential decay of correlations at
all temperatures and all positive disorder strengths. Precisely, the
following theorem was proved.
\begin{thm*}
[exponential decay of correlations \cite{ding2019exponentialpostemp,expdecay}]
In the nearest-neighbor random-field Ising model on $\zz^{2}$, as
specified by (\ref{hamiltonian rfim}), at any coupling strength $J>0$
and disorder intensity $\varepsilon>0$ there exist constants $C(J/\varepsilon),c(J/\varepsilon)>0$,
depending only on the ratio $J/\varepsilon$, such that at all temperatures
$T\geq0$ and homogeneous external field $\eta\in\rr$ the order parameter
satisfies for all integer $L\ge1$,
\begin{equation}
m(L)\leq C(J/\varepsilon)\exp(-c(J/\varepsilon)L).\label{eq:exp decay-1}
\end{equation}
\end{thm*}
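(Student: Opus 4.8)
The plan is to reduce the order parameter to the probability of a long interface encircling the origin, and then to show that such an interface is exponentially suppressed by the competition between its surface cost and the resistance offered by the random field, with the (naively enormous) entropy of interfaces tamed by a multiscale chaining/coarse-graining argument. First I would pass to a contour representation of $\langle\sigma_0\rangle^+_{\Lambda(L)}-\langle\sigma_0\rangle^-_{\Lambda(L)}$: for every fixed realization of $h$ the RFIM is still ferromagnetic, so this difference is nonnegative, and a monotone coupling of the two states (via the random-current or a random-cluster-type representation, or directly via the two ground states when $T=0$) bounds it by $2\,\pp_h(\text{the origin is separated from }\partial\Lambda(L)\text{ by a }\pm\text{-interface }\gamma)$, where $\gamma$ is a loop enclosing the origin with $|\gamma|\ge cL$. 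Splitting according to the dyadic scale $\ell$ at which $\gamma$ lives, and using that $\{0\leftrightarrow\partial\Lambda(L)\}$ forces a crossing of \emph{every} annulus $\Lambda(2\ell)\setminus\Lambda(\ell)$ with $\ell\le L$, it suffices to prove a per-scale estimate: the ($h$-averaged) probability that some interface crosses $\Lambda(2\ell)\setminus\Lambda(\ell)$ is at most $q(J/\varepsilon)<1$ for all $\ell$ and at most $\exp(-c(J/\varepsilon)\ell)$ once $\ell$ is large. Multiplying these over the $\Theta(\log L)$ annuli then gives $m(L)\le C\exp(-cL)$, dominated by the outermost scale.

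The heart of the matter is this single-scale crossing bound, and the obstacle there is precisely the Imry--Ma borderline. A crude energy--entropy count gives a contour weight $\exp(-2J|\gamma|)$ against $\exp(c|\gamma|)$ contours of that length, which already closes the argument when $J/\varepsilon$ is large \emph{if one ignores the field}; but the field cannot be ignored. Locally deforming $\gamma$ and flipping the enclosed spins changes the field energy by $\sim\varepsilon\sum\pm h_v$, so a single contour gains $\Theta(\varepsilon\sqrt{|\gamma|})$ in expectation, and the maximal gain over $\exp(c|\gamma|)$ contours is as large as $\Theta(\varepsilon|\gamma|)$ — of the same order as the surface cost. The resolution, building on the Aizenman--Wehr fluctuation mechanism and on the recent works in the series, is that the field gains of nearby contours are highly correlated, so the effective entropy is far below $\exp(c|\gamma|)$. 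I would control it by a hierarchical covering of the admissible interfaces in the spirit of Aizenman--Burchard: at each scale one pays only the logarithm of the number of coarse ``skeletons'' compatible with a crossing, and at the finest relevant scale one invokes a Gaussian concentration (hypercontractivity) estimate for the conditional free-energy difference to show that the field gain beats the surface cost with probability $\exp(-c(J/\varepsilon)\ell)$. Summing the scale contributions $\sum_{\text{dyadic}}\exp(-c\ell)$ recovers the per-annulus bound, uniformly in $T$ and $\eta$.

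The main difficulty I expect is making this union bound quantitative enough that the accumulated entropy is genuinely dominated by the surface-plus-field cost at \emph{every} scale, with constants depending on $J/\varepsilon$ alone and with the $T=0$ ground-state analysis and the $T>0$ Gibbs analysis run in parallel (the interface is a bona fide optimizing object at zero temperature, and one needs the argument not to degrade as $T\downarrow 0$). Secondary points to handle: the absence of a clean FK representation for the RFIM, since the Gaussian field has both signs, which forces working with Ising contours or the random-current representation rather than with a single monotone percolation measure; the passage from a concentration statement about the conditional free-energy difference to an actual bound on a crossing probability, which requires a finite-energy/monotonicity input; and the bookkeeping needed to turn the scale-by-scale estimates into the stated $\exp(-cL)$. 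The regime of small $J/\varepsilon$ (strong field) is comparatively soft and can be handled separately by a high-temperature-type expansion as in the works of Berretti and of Fr\"ohlich--Imbrie.
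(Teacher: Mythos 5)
You are attempting to prove a statement that this paper only quotes from \cite{ding2019exponentialpostemp,expdecay}; the relevant comparison is therefore with the proof recalled in Section \ref{sec:Upper-Bound-in} (disagreement percolation, Lemma \ref{lem:5.1 ron}, Lemma \ref{lem:ron 5.2}, and the Aizenman--Burchard input). Measured against that, your sketch has a genuine gap at its central step. Your reduction multiplies crossing estimates over the $\Theta(\log L)$ \emph{dyadic} annuli between the origin and $\partial\Lambda(L)$; a product of $\Theta(\log L)$ factors each bounded by $q<1$ yields only polynomial decay $L^{-c}$, so exponential decay in $L$ must come entirely from your claimed single-scale bound $\exp(-c(J/\varepsilon)\ell)$ at the outermost annulus. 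But that single-scale exponential bound is essentially the theorem itself, and the mechanism you offer for it --- surface cost versus field gain with the contour entropy tamed by hierarchical chaining plus Gaussian concentration --- is exactly the Imry--Ma borderline computation that does \emph{not} close in two dimensions: the entropy $\exp(c|\gamma|)$ and the maximal field gain $\Theta(\varepsilon|\gamma|)$ are both of surface order, and no known argument extracts an exponentially small crossing probability at a single scale directly from this competition. (Indeed, the crossing probability of $\Lambda(2\ell)\setminus\Lambda(\ell)$ remains near $1$ until $\ell$ reaches the correlation length, so your uniform bound $q<1$ ``for all $\ell$'' is also false as stated.)

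The actual proof circumvents this in two separate moves that are absent from your sketch. First, the Aizenman--Wehr-type perturbation argument yields only hypothesis \textbf{H2} for the disagreement percolation (Lemma \ref{lem:ron 5.2}), which via Aizenman--Burchard forces any annulus crossing to have length at least $\ell^{1+\alpha}$; combined with a \emph{linear} first-moment bound on the number of disagreement sites, Markov's inequality gives an annulus-crossing probability decaying only \emph{polynomially} in $\ell$ --- enough to drop below a fixed threshold $\kappa<1$ at some finite scale $\ell_{0}$ (which, as Theorem \ref{thm:alpha and l bounds} quantifies, may be as large as $\exp(\exp(O((J/\varepsilon)^{2})))$). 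Second, a renormalization over $\Theta(L/\ell_{1})$ \emph{translated annuli at a fixed scale} $\ell_{1}$, with a decoupling argument for the disagreement percolation across these blocks, converts ``below threshold at one scale'' into genuine exponential decay in $L$; this is the content of \cite[Theorem 5.5 and (6.48)]{expdecay}, encoded here in the bound (\ref{eq:2}). Without either the polynomial single-scale mechanism or the fixed-scale block iteration, your argument does not produce the stated $\exp(-c(J/\varepsilon)L)$.
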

While the theorem establishes exponential decay of correlations, it
leaves open the question of how the correlation length varies as the
ratio $J/\varepsilon$ tends to infinity (at low temperature). Here,
the notion of ``correlation length'' can be given several interpretations:
One standard definition is the infimum over $\zeta$ for which $m(L)\le e^{-L/\zeta}$
for all sufficiently large $L$; denote this value by $\zeta_{1}=\zeta_{1}(T,J,\eta,\varepsilon)$.
A second possibility is the minimal value of $L$ for which $m(L)$
drops below some fixed threshold $m$ (e.g., $m=1/2$); denote this
value by $\zeta_{2}:=\zeta_{2}(T,J,\eta,\varepsilon,m)$. In \cite{expdecay}
it was asked to determine the order of magnitude of the correlation
length. It was noted that $\zeta_{2}\le\exp\left(\exp\left(O\left((\frac{J}{\varepsilon})^{2}\right)\right)\right)$
was established in \cite{aizenman2019power} (for each fixed $T,\eta$
and $m$) and that the behavior $\zeta_{1}=\exp(O((J/\varepsilon)^{2}))$
was discussed in \cite{bricmont1988hierarchical}.

The goal of this work is to provide upper and lower bounds on the
correlation length of the two-dimensional Ising model. The following
is our main result.
\begin{thm}
\label{thm:clength upper-1} Consider the nearest-neighbor random-field
Ising model on $\zz^{2}$, as specified by (\ref{hamiltonian rfim}),
at coupling strength $J>0$ and disorder intensity $\varepsilon>0$
satisfying that $\frac{J}{\varepsilon}\ge1$.
\begin{enumerate}
\item There exists a universal constant $C>0$ such that at all temperatures
$T\ge0$ and homogeneous external field $\eta\in\rr$, the correlation
length $\zeta_{1}$ satisfies
\begin{equation}
\zeta_{1}(T,J,\eta,\varepsilon)\leq\exp\left(\exp\left(C\left(\frac{J}{\varepsilon}\right)^{2}\right)\right).\label{eq:corr length bound upper-1}
\end{equation}
\item For each $0<\delta<1$ there exists $c(\delta)>0$ (depending only
on $\delta$) such that at zero temperature and zero homogeneous external
field the correlation length $\zeta_{2}$ satisfies
\begin{equation}
\zeta_{2}(0,J,0,\varepsilon,1-\delta)\ge\exp\left(c(\delta)\left(\frac{J}{\varepsilon}\right)^{2/3}\right).\label{eq:lower bound}
\end{equation}
In other words, when $L<\exp\left(c(\delta)(\frac{J}{\varepsilon})^{2/3}\right)$
the spin at the origin of the ground state in $\Lambda(L)$ with $+$
boundary conditions is equal to $1$ with probability (over the random
field) greater than $1-\frac{1}{2}\delta$.
\end{enumerate}
\end{thm}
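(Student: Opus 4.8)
\section{Outline of the proof}

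\textbf{Part (1): the upper bound.} The plan is to re-run the proof of the exponential-decay theorem quoted above while tracking the dependence of every constant on $J/\varepsilon$. That proof has two stages. In the first, an Aizenman--Wehr type fluctuation (martingale) argument produces a finite scale $L_{0}=L_{0}(J/\varepsilon)$ at which the order parameter has already dropped below a universal threshold, say $m(L_{0})\le\fourth$; in the second, a multiscale renormalization scheme, whose geometric input is the Aizenman--Burchard regularity (tortuosity) bound for the phase-separation interface, upgrades this to exponential decay $m(L)\le\const\,e^{-cL}$ with rate $c$ of order $1/L_{0}$, so that $\zeta_{1}\lesssim L_{0}$ and it suffices to bound $L_{0}$. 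I would first do the quantitative form of the fluctuation argument: the variance of the ground-state energy difference between $+$ and $-$ boundary conditions in $\Lambda(L)$ is comparable to the square of an $O(JL)$ boundary modification, while by Gaussian integration by parts it is bounded below by a multiple of $\varepsilon^{2}\sum_{v\in\Lambda(L)}(\text{influence of }h_{v})^{2}\gtrsim\varepsilon^{2}L^{2}m(L)^{2}$; comparing these bounds across a geometric sequence of scales yields an estimate of the shape $m(L)\lesssim(J/\varepsilon)(\log\log L)^{-1/2}$, so that $m(L_{0})\le\fourth$ already when $\log\log L_{0}\gtrsim(J/\varepsilon)^{2}$, i.e.\ when $L_{0}=\exp(\exp(C(J/\varepsilon)^{2}))$. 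The harder ingredient is the \emph{quantitative} Aizenman--Burchard input: one has to track how the H\"older/tortuosity exponent $\rho$ it produces depends on the a priori crossing-probability bounds available at weak disorder --- it should be no smaller than $\exp(-C(J/\varepsilon)^{2})$ --- and then verify that feeding an exponent this small into the renormalization still yields exponential decay with rate $\gtrsim1/L_{0}$ rather than forcing a larger initial scale. I expect this reconciliation --- turning the tortuosity argument, which in its original form is purely qualitative, into something quantitatively compatible with the multiscale scheme so that nothing worse than $\exp(\exp(C(J/\varepsilon)^{2}))$ comes out --- to be the main obstacle of Part~(1).

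\textbf{Part (2): reduction to a droplet estimate.} At $T=0$ and $\eta=0$ the symmetry $h\mapsto-h$ gives $m(L)=\ee[\sigma_{0}^{*}]=1-2\,\pp(\sigma_{0}^{*}=-1)$, where $\sigma^{*}$ is the almost surely unique ground state in $\Lambda(L)$ with $+$ boundary conditions, so that $m(L)>1-\delta$ is equivalent to $\pp(\sigma_{0}^{*}=-1)<\half\delta$; the task is to prove the latter for $L\le\exp(c(\delta)(J/\varepsilon)^{2/3})$. Flipping to $+$ all spins in a subset $D\subseteq\Lambda(L)$ changes the energy of the all-plus configuration by $2J|\partial_{e}D|-2\varepsilon\sum_{v\in D}h_{v}$, where $\partial_{e}D$ denotes the set of edges from $D$ to its complement (with the $+$ exterior counted); hence the $-$ region of $\sigma^{*}$ is the minimizer of $D\mapsto 2J|\partial_{e}D|-2\varepsilon\sum_{v\in D}h_{v}$ over $D\subseteq\Lambda(L)$, and peeling off the connected component of that minimizer containing the origin shows
\[
\{\sigma_{0}^{*}=-1\}\ \subseteq\ \bigl\{\,\exists\ \text{connected }D\ni0,\ D\subseteq\Lambda(L):\ \varepsilon\textstyle\sum_{v\in D}h_{v}\ \ge\ J\,|\partial_{e}D|\,\bigr\}.
\]
It remains to show this event has probability below $\half\delta$ in the stated range of $L$.

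\textbf{Part (2): the multiscale droplet estimate.} For a \emph{single} connected droplet $D$ with $|\partial_{e}D|=\ell$ one has $\varepsilon\sum_{v\in D}h_{v}\sim\mathcal{N}(0,\varepsilon^{2}|D|)$ with $|D|\le\ell^{2}/16$, so it is favorable with probability at most $\exp(-8(J/\varepsilon)^{2})$; the difficulty is that there are $\exp(\Theta(\ell))$ connected droplets of a given boundary length, and a naive union over them is hopeless. Adapting the coarse-graining of Fisher--Fr\"ohlich--Spencer, I would fix a geometric hierarchy of scales $1=s_{0}<s_{1}<\cdots<s_{m}$ with $s_{m}\asymp L$, and write $\sum_{v\in D}h_{v}$ as a telescoping sum whose $k$-th term depends on $D$ only through its coarsenings at scales $s_{k}$ and $s_{k+1}$: at level $k$ the number of admissible coarsenings of $\partial D$ is only of order $\exp(O(|\partial_{e}D|/s_{k}))$, while the Gaussian increment it contributes has correspondingly small variance, so a chaining/Borell--TIS estimate caps, uniformly over all connected $D\subseteq\Lambda(L)$, the largest attainable value of $\varepsilon\sum_{v\in D}h_{v}$ by roughly $\varepsilon\,|\partial_{e}D|\,(\log L)^{3/2}$, with a residual large-deviation cost $\exp(-\Theta((J/\varepsilon)^{2}))$. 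When $\log L\le c(\delta)(J/\varepsilon)^{2/3}$ this cap is strictly below $\half J|\partial_{e}D|$, so for every droplet the event $\varepsilon\sum_{v\in D}h_{v}\ge J|\partial_{e}D|$ forces a deviation of size $\gtrsim J|\partial_{e}D|$, whose probability, summed over the $O(\log L)$ boundary-length scales, stays below $\half\delta$ (for $J/\varepsilon$ large; the remaining range of parameters is absorbed into the choice of $c(\delta)$). The main obstacle here is the bookkeeping of the coarse-graining: the hierarchy $\{s_{k}\}$ and the level-by-level competition between the entropy of droplet shapes and the Gaussian fluctuations must be organized tightly enough that the surviving exponent is exactly $2/3$ with all constants depending only on $\delta$. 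This is also precisely where the method is lossy relative to the conjectured correlation length $\exp(\Theta((J/\varepsilon)^{2}))$: it controls the effective random field felt by a droplet of diameter $R$ by $\varepsilon(\log R)^{3/2}$ rather than by the conjecturally correct $\varepsilon(\log R)^{1/2}$.
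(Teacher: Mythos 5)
Your Part (1) correctly identifies where the difficulty sits but does not resolve it. You write that one must ``track how the H\"older/tortuosity exponent\dots depends on the a priori crossing-probability bounds'' and that you ``expect this reconciliation\dots to be the main obstacle''; that reconciliation is not a detail to be checked but the entire content of the upper bound. Concretely, the disagreement-percolation estimate of Aizenman--Harel--Peled gives hypothesis \textbf{H2} with crossing parameter $\rho=1-c\exp(-C(J/\varepsilon)^{2})$, and one must prove a \emph{quantitative} Aizenman--Burchard theorem: if \textbf{H2} holds with $\rho=1-\varepsilon_{0}$, then the tortuosity exponent satisfies $\alpha\gtrsim\varepsilon_{0}^{2}/\log(1/\varepsilon_{0})^{3}$ \emph{and} the capacities obey an explicit tail bound of the form $\pp_{\delta}(T_{1+\alpha,1;\delta}<\mu)\le\mu^{K/\log(1/\varepsilon_{0})}$ uniformly in the cutoff $\delta$. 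This is obtained by rerunning the straight-runs multiscale analysis with a scaling factor $\gamma$ chosen of order $\varepsilon_{0}^{-2}\log^{2}(1/\varepsilon_{0})$, i.e.\ just large enough that the entropy $\gamma^{4dn}$ of nested cylinder configurations is beaten by the crossing cost $(1-\varepsilon_{0})^{c\sqrt{\gamma}\,n}$; both exponents in the theorem come from this choice. Without it you cannot evaluate the two quantities $\alpha$ and $\ell_{1}$ that enter the correlation-length formula $C\max(2,J/\varepsilon,1/\alpha,\ell_{1})^{C/\alpha^{2}}$, and the double exponential does not follow. Note also that the stage-one mechanism you sketch (an Aizenman--Wehr fluctuation bound giving $m(L)\lesssim(J/\varepsilon)(\log\log L)^{-1/2}$) controls $\zeta_{2}$, not the exponential rate $\zeta_{1}$; in the actual argument the double exponential arises because the tail exponent $c(\varepsilon/J)^{2}$ in the capacity bound forces the tortuosity estimate to become effective only at scale $\ell_{1}=\exp(\exp(O((J/\varepsilon)^{2})))$.

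Your Part (2) is essentially the argument used here and is sound. Your reduction---peel off the connected component through the origin of the minimizing $-$ region, which must then satisfy $\varepsilon\sum_{v\in D}h_{v}\ge J|\partial_{e}D|$---is a legitimate variant of the one in the paper, which instead descends through holes to locate a \emph{simply connected} constant-sign component somewhere in $\Lambda(L)$ and therefore only ever needs the field estimate for simply connected sets. If you insist on general connected droplets you must verify that the coarse-graining entropy counts survive the presence of holes (the boundaries of $D$ and of its coarsenings may then have many components); this does work, since the counts use only $|\partial\Gamma_{k}|\le8|\partial\Gamma|$ and the confinement of $\Gamma$ to a box of side $O(|\partial\Gamma|)$, but it is exactly the ``contours within contours'' pitfall that the simply-connected reduction is designed to sidestep, so it deserves a sentence. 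The multiscale balance you describe---entropy $\exp(O(\ell(\log\ell+k)/2^{k}))$ of coarsenings at scale $2^{k}$ against Gaussian increments of variance $O(2^{k}\ell)$ over the corridors $\Gamma_{k+1}\triangle\Gamma_{k}$, yielding a uniform cap of order $\varepsilon\ell(\log\ell)^{3/2}$ and hence the exponent $2/3$---is exactly right, as is your diagnosis of where the loss relative to $(\log R)^{1/2}$ occurs.
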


Our approach to the correlation length upper bound (\ref{eq:corr length bound upper-1})
builds upon the recent proofs of exponential decay of correlations
\cite{ding2019exponentialpostemp,expdecay}. While the available proofs
do not provide an explicit upper bound on the correlation length,
it was noted in \cite{expdecay} that such a bound will follow from
a quantitative refinement of one of the main tools of the proof, the
Aizenman--Burchard theorem on the tortuosity of random curves \cite{aizemanburchard}.
In Section \ref{sec:The-Aizenman=002013Burchard-Theorem} we provide
such a refinement, which is then used in Section \ref{sec:Upper-Bound-in}
to derive the upper bound (\ref{eq:corr length bound upper-1}). Our
quantitative refinement of the Aizenman--Burchard theorem, given
in Theorem \ref{thm:exact roughness}, may be of use in other contexts
as well.

Section \ref{sec:Lower-Bound} and Appendix \ref{sec:Appendix:-Proving-the}
are devoted to the proof of the lower bound (\ref{eq:lower bound}),
derived in a somewhat more general setting. The proof adapts to the
two-dimensional setting the ``coarse graining'' methods of Fisher--Fr\"{o}hlich--Spencer
\cite{fisher} which were developed in their discussion of the phase
transition that the three-dimensional RFIM displays at low temperature
as the random-field strength is varied (the transition was given rigorous
proofs in the celebrated works of Imbrie \cite{imbrie1985ground}
and Bricmont--Kupiainen \cite{bricmont1988phase}). This approach
will in fact give us a stronger result, which is that below the lower
bound we should expect all spins in $\Lambda(L)$, not just at the
origin, to be equal to 1 with high probability.

While this work was in progress, a sharper estimate of the correlation
length $\zeta_{2}$ was established by Ding--Wirth \cite{ding2020correlation}.
For $\frac{J}{\varepsilon}\ge1$ and $0<\delta<1$, they proved the
lower bound $\zeta_{2}(0,J,0,\varepsilon,\delta)\ge\exp\left(c(\delta)(\frac{J}{\varepsilon})^{4/3}\frac{1}{\log(J/\varepsilon)}\right)$
at zero temperature and the near-matching upper bound $\zeta_{2}(T,J,0,\varepsilon,\delta)\le\exp\left(C(\delta)(\frac{J}{\varepsilon})^{4/3}\right)$
at all temperatures $T\ge0$.

\section{\label{sec:The-Aizenman=002013Burchard-Theorem}The Aizenman--Burchard
Theorem and a Quantitative Refinement}

\subsection{A Brief Introduction}

In this section we present a quantitative refinement to a result by
Aizenman and Burchard regarding fractality of random curves in $\rr^{d}$
\cite{aizemanburchard}.

A system of random curves is a collection of set-valued random variables,
$(\mathcal{F}_{\delta})_{\delta>0}$, where each element of $\mathcal{F}_{\delta}$
is some piecewise linear curve, composed of line segments of length
$\delta$. In essence, Aizenman and Burchard showed that if a system
of random curves satisfies an assumption, which we will call \textbf{H2},
then as $\delta$ gets lower, $\eff_{\delta}$ will resemble a collection
of curves of Hausdorff dimension greater than some constant $d_{\min}>1$
independent of $\delta$. Our goal in this section is to quantify
both $d_{\min}$ and also the rate at which $\eff_{\delta}$ starts
to resemble such a collection of curves. 

The assumption \textbf{H2 }depends on three parameters, $0<\rho<1,\sigma>0$,
and $K>0$, and it goes roughly as follows; for any set of $n$ cylinders
in $\rr^{d}$ with aspect ratio $\sigma$ and length greater than
$\delta$, the probability that each of the cylinders is crossed by
some curve in $\mathcal{F}_{\delta}$ is less than $K\rho^{n}$. Our
improvement to the Aizenman--Burchard theorem will be to quantify
the fractality of a system of curves $(\mathcal{F}_{\delta})_{\delta>0}$
satisfying \textbf{H2 }in terms of the parameter $\rho$ when it's
close to 1. More specifically, assuming $\rho=1-\varepsilon$ for
small enough $\varepsilon>0$, we will show the lower bound
\[
d_{\min}\geq1+\kappa\frac{\varepsilon^{2}}{\log\left(1/\varepsilon\right)^{3}},
\]
for a universal constant $\kappa$ independent of $\rho$.

This quantification, in addition to a more technical one we will define
later, will be crucial later in this paper for obtaining the upper
bound (\ref{eq:corr length bound upper-1}).

\subsection{Definitions\label{subsec:Definitions}}
\begin{defn}
\label{def: well seperated}A finite collection of subsets in $\rr^{d}$
is called \textbf{well separated }if for any two sets $A,B$ in it:
\[
d(A,B)\geq\max(\diam(A),\diam(B)),
\]
where $d(A,B):=\inf\{d(x,y)\,|\,x\in A,\,y\in B\}$, $d(x,y)$ is
the Euclidean distance between $x,y\in\mathbb{R}^{d}$, and where
$\diam(A):=\sup\{d(x,y)\,|\,x,y\in A\}$ is the Euclidean diameter
of $A$.
\end{defn}

\begin{defn}
\label{def:polygonal path}A \textbf{$\delta$-polygonal path} in
$\rr^{d}$ is a piece-wise linear continuous path, $\gamma:I\rightarrow\rr^{d}$
(where $I\subseteq\rr$ is some closed interval), formed by the concatenation
of finitely many segments of length $\delta$.
\end{defn}

\begin{defn}
\label{def:randcurve system}A \textbf{system of random curves with
short variable cutoff in a compact set $\Lambda\subseteq\rr^{d}$
}is a collection of set-valued random variables $\mathcal{F}=\left(\mathcal{F_{\delta}}\right)_{0<\delta\leq1}$,
with each $\mathcal{\eff_{\delta}}$ equaling a random finite set
of $\delta$-polygonal paths. Such collections form closed sets in
the Hausdorff metric contained in $\Lambda$ and the sigma algebra
in this case is the one induced by the Hausdorff metric. We will usually
denote the probability measure of the space on which $\mathcal{F_{\delta}}$
is defined by $\pp_{\delta}$.
\end{defn}

In a way, a system of random curves with short variable cutoff is
a substitute for sampling a random \emph{continuous curve} (or collection
of curves) via discrete approximations. We sample random piece-wise
linear curves with step length $\delta$, and the smaller $\delta$
gets the more complex our curves become.
\begin{defn}
\label{def:h2}A system of random curves with short variable cutoff
$\mathcal{F}=\left(\mathcal{F_{\delta}}\right)_{0<\delta\leq1}$ in
some compact $\Lambda\subseteq\rr^{d}$ is said to satisfy \textbf{hypothesis
H2} with parameters $0<\rho<1$ and$\,\sigma\geq1$, if there exists
a constant $C>0$ such that for any $\delta>0$ and any finite collection
of well-separated cylinders $(A_{i})_{i=1}^{m}\subseteq\Lambda$ with
aspect ratio $\sigma$ and lengths $>\delta$ it holds that
\begin{equation}
\pp_{\delta}\left(\textup{\ensuremath{\bigcap_{i=1}^{m}\{}\ensuremath{A_{i}} is crossed in the long direction by a curve in \ensuremath{\mathcal{F}_{\delta}}\}}\right)\leq C\rho^{m},\label{eq:H2}
\end{equation}
and the constant $C$ is independent of $m$ and the chosen cylinders.
Here the length of the cylinder is the length between its two circular
bases, and the aspect ratio $\sigma$ is the ratio between its length
and the radius of the bases. By a crossing of a cylinder we mean a
path contained in the cylinder which connects its two bases. Note
that the bases need not be parallel to one of the coordinate hyper-planes
$\{x_{i}=0\}$.
\end{defn}

\begin{rem}
\label{rem:In-the-original}In the original theorem \cite{aizemanburchard},
the assumption that $\sigma\geq1$ is ommited and in its place a crossing
of the cylinder is defined as a crossing of its long side rather than
a crossing from one base to the other. Here we will simplify things
by just looking at the $\sigma\geq1$ and only at crossings from one
base to the other. As will be seen later, our proof will also work
if the assumption of $\sigma\geq1$ is replaced by $\sigma\geq\sigma_{0}$
for some $\sigma_{0}>0$.

\begin{figure}
\begin{centering}
\includegraphics[scale=0.3]{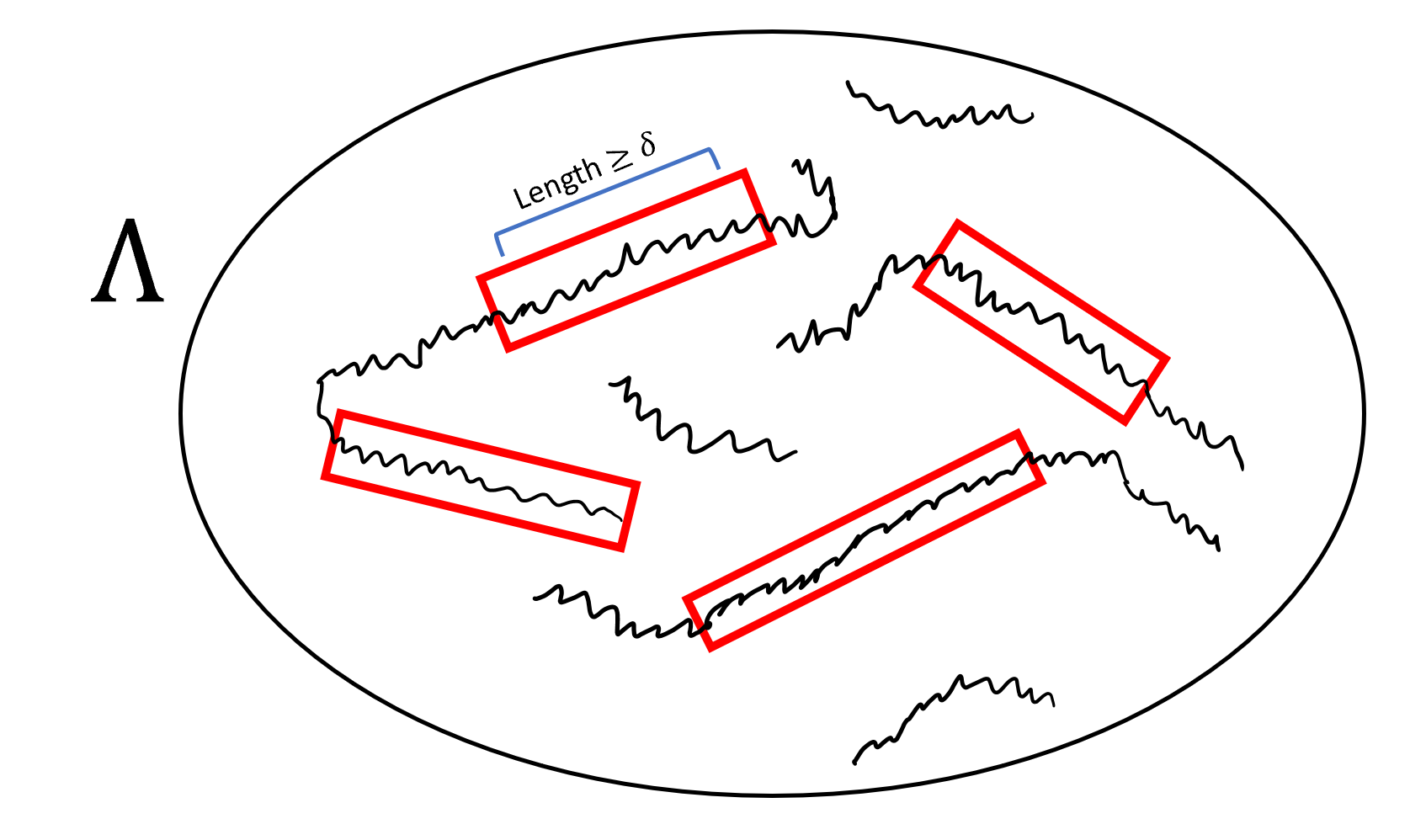}
\par\end{centering}
\caption{In order for hypothesis H2 to hold, we need that for every collection
of cylinders (with sufficient length and separation) the probability
of all of them being crossed by curves in $\mathcal{F_{\delta}}$
is exponentially decreasing with the number of cylinders.}
\end{figure}
\end{rem}

\begin{defn}
For a set $A\subseteq\rr^{d}$, parameters $\ell>0$ and $s$, we
define the $s,\ell$ capacity of $A$ to be:
\begin{equation}
\frac{1}{\capi_{s;\ell}(A)}=\inf_{\mu\geq0,\mu(A)=1}\iint_{A\times A}\frac{\mu(dx)\mu(dy)}{\max(\left|x-y\right|,\ell)^{s}},\label{eq: cap def}
\end{equation}
where the infimum is over all probability measures on $A$.

The following properties of the capacity from \cite{aizemanburchard}
will be of use to us:
\end{defn}

\begin{claim}
\label{claim:cap properties}Let $A\subset\rr^{d}$. Then:

(i) For any covering of $A$ by sets $(B_{j})$ of diameter at least
$\ell$:
\begin{equation}
\sum_{j}\diam(B_{j})^{s}\geq\capi_{s;\ell}(A).\label{eq:capprop1}
\end{equation}

(ii) The minimal number of elements in a covering of $A$ by sets
of diameter $\ell$, $\mathcal{N}(A,\ell)$, satisfies:
\begin{equation}
\mathcal{N}(A,\ell)\geq\capi_{s;\ell}(A)\cdot\ell^{-s}.\label{eq:capprop2}
\end{equation}

(iii) If $\inf_{\ell>0}\capi_{s;\ell}(A)>0$, then the Hausdorff dimension
of $A$ is at least $s$.
\end{claim}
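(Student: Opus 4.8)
The plan is to treat the three parts in turn; part (i) carries the analytic content and parts (ii), (iii) follow from it, the latter together with a standard fact from potential theory.

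For part (i) I would first normalize the covering. Intersecting each $B_{j}$ with $A$ and then disjointifying ($B_{j}\mapsto C_{j}:=(B_{j}\cap A)\setminus\bigcup_{k<j}(B_{k}\cap A)$) only decreases diameters, so we may assume the $C_{j}$ are pairwise disjoint subsets of $A$ with $\bigcup_{j}C_{j}=A$ and $\diam(C_{j})\le\diam(B_{j})$, while keeping the \emph{original} weights $\diam(B_{j})\ge\ell$. Fix a probability measure $\mu$ on $A$ and write $E_{\ell}(\mu):=\iint_{A\times A}\max(|x-y|,\ell)^{-s}\,d\mu(x)\,d\mu(y)$, so that $1/\capi_{s;\ell}(A)=\inf_{\mu}E_{\ell}(\mu)$ by (\ref{eq: cap def}). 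For $x,y\in C_{j}$ we have $\max(|x-y|,\ell)\le\max(\diam(B_{j}),\ell)=\diam(B_{j})$, hence $\mu(C_{j})^{2}\le\diam(B_{j})^{s}\iint_{C_{j}\times C_{j}}\max(|x-y|,\ell)^{-s}\,d\mu\,d\mu$. Applying Cauchy--Schwarz to $1=\mu(A)=\sum_{j}\mu(C_{j})$ with weights $\diam(B_{j})^{s}$, and using disjointness of the sets $C_{j}\times C_{j}$ together with nonnegativity of the kernel,
\[
1=\Big(\sum_{j}\mu(C_{j})\Big)^{2}\le\Big(\sum_{j}\diam(B_{j})^{s}\Big)\Big(\sum_{j}\frac{\mu(C_{j})^{2}}{\diam(B_{j})^{s}}\Big)\le\Big(\sum_{j}\diam(B_{j})^{s}\Big)E_{\ell}(\mu).
\]
Taking the supremum over $\mu$ gives $\sum_{j}\diam(B_{j})^{s}\ge1/\inf_{\mu}E_{\ell}(\mu)=\capi_{s;\ell}(A)$, which is (\ref{eq:capprop1}). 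Part (ii) is then immediate: take a covering of $A$ by $\mathcal{N}(A,\ell)$ sets of diameter at most $\ell$, enlarge each (if necessary) to a ball of diameter exactly $\ell$, which still covers $A$, and apply (i); every term equals $\ell^{s}$, so $\mathcal{N}(A,\ell)\,\ell^{s}\ge\capi_{s;\ell}(A)$, i.e. (\ref{eq:capprop2}).

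For part (iii) I would invoke the classical potential-theoretic characterization of Hausdorff dimension. Since $\max(|x-y|,\ell)$ is nondecreasing in $\ell$, so is $\capi_{s;\ell}(A)$, so the hypothesis reads $\capi_{s;\ell}(A)\ge c:=\inf_{\ell>0}\capi_{s;\ell}(A)>0$ for every $\ell>0$; equivalently, for each $\ell$ there is a probability measure $\mu_{\ell}$ on $A$ with $E_{\ell}(\mu_{\ell})\le2/c$. In the situations of interest $A$ is compact, so the probability measures on $A$ form a weak-$\ast$ compact set; let $\mu$ be a weak-$\ast$ limit of the $\mu_{\ell}$ along some sequence $\ell\to0$. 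For a fixed $\ell_{0}>0$ the kernel $\max(|x-y|,\ell_{0})^{-s}$ is bounded and continuous on $A\times A$, so $E_{\ell_{0}}$ is weak-$\ast$ continuous, and since $E_{\ell_{0}}(\mu_{\ell})\le E_{\ell}(\mu_{\ell})\le2/c$ once $\ell\le\ell_{0}$ we get $E_{\ell_{0}}(\mu)\le2/c$; letting $\ell_{0}\downarrow0$, monotone convergence produces a probability measure $\mu$ on $A$ of finite Riesz $s$-energy, $\iint_{A\times A}|x-y|^{-s}\,d\mu(x)\,d\mu(y)\le2/c$. The classical Frostman-type theorem --- a probability measure of finite $s$-energy satisfies $\mu(B(x,r))=o(r^{s})$ off a $\mu$-null set, whence the mass distribution principle gives $\mathcal{H}^{s'}(A)>0$ for every $s'<s$ --- then yields $\dim_{H}(A)\ge s$; see, e.g., the monographs of Kahane or Mattila, or \cite{aizemanburchard}.

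I expect parts (i) and (ii) to be completely routine once the Cauchy--Schwarz in (i) is set up with the correct weights. The only step needing genuine care is (iii), where one must pass from the \emph{scale-dependent} near-minimizers $\mu_{\ell}$ of the truncated energies to a \emph{single} measure of finite \emph{untruncated} $s$-energy; the weak-$\ast$ compactness used there tacitly assumes $A$ bounded, and for a general $A\subseteq\rr^{d}$ one would first intersect with a large ball and use the countable stability of Hausdorff dimension. Beyond this, (iii) is exactly the classical potential-theoretic dimension criterion, which I would cite rather than reprove.
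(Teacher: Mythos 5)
Your proposal is correct, but note that the paper does not prove this claim at all --- it is imported verbatim from Aizenman--Burchard \cite{aizemanburchard}, and your Cauchy--Schwarz argument for (i)--(ii) and the energy/Frostman criterion for (iii) are essentially the standard proofs found there. One small imprecision: in deriving (ii) from (i) you ``enlarge each set to a ball of diameter exactly $\ell$,'' but a set of diameter at most $\ell$ need not be contained in a ball of diameter $\ell$ (e.g.\ an equilateral triangle of side $\ell$); either enlarge to an arbitrary set of diameter exactly $\ell$ (append a suitably placed point), or simply rerun the energy estimate of (i) with the bound $\max(|x-y|,\ell)^{-s}\geq\ell^{-s}$ valid whenever $|x-y|\leq\ell$, which yields (ii) directly.
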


\subsection{The Main Theorem}

Aizenman and Burchard proved the following theorem relating the hypothesis
\textbf{H2 }and the capacity (and thus Hausdorff dimension and tortuosity)
of a random curve system:
\begin{thm}
\label{thm:roughness}Let $\mathcal{F}=(\mathcal{F_{\delta}})_{0<\delta\leq1}$
be a system of random curves with short variable cutoff in $\Lambda\subseteq\rr^{d}$
satisfying hypothesis \textbf{H2}, then there exists some $d_{\min}>1$
such that for any fixed $r>0$ and $s>d_{\min}$, the random variables:
\begin{equation}
T_{s,r;\delta}:=\inf_{\mathcal{C}\in\mathcal{F}_{\delta},\diam(C)\geq r}\capi_{s;\delta}(\mathcal{C})\label{eq:T var}
\end{equation}
satisfy that for every $\varepsilon>0$ there exists a $\mu>0$ such
that for all $\delta$:
\[
\pp(T_{s,r;\delta}\leq\mu)<\varepsilon.
\]
\end{thm}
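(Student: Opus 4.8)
The plan is to run the multiscale argument of Aizenman and Burchard, the point being to produce, for every curve $\mathcal{C}\in\mathcal{F}_{\delta}$ with $\diam(\mathcal{C})\ge r$, a (random) probability measure $\nu_{\mathcal{C}}$ supported on $\mathcal{C}$ whose truncated $s$-energy $\iint\max(|x-y|,\delta)^{-s}\,\nu_{\mathcal{C}}(dx)\,\nu_{\mathcal{C}}(dy)$ is bounded by a constant $\const=\const(s,r,d_{\min})$ \emph{uniformly in $\delta$}, on an event of probability $>1-\varepsilon$. Since by (\ref{eq: cap def}) the quantity $\capi_{s;\delta}(\mathcal{C})$ is exactly the reciprocal of the infimal such energy, exhibiting one good measure forces $\capi_{s;\delta}(\mathcal{C})\ge 1/\const$, hence $T_{s,r;\delta}\ge\mu:=1/\const$, on that event. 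So everything reduces to the construction of $\nu_{\mathcal{C}}$ and an energy estimate, which in turn rest on a single probabilistic input extracted from \textbf{H2}. (One must take $s<d_{\min}$ here for the energy to converge, as the exponent is negative otherwise.)

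The measure is built by a branching construction across dyadic scales. Fix a large integer scale-jump $\lambda$, to be chosen depending on $\rho,\sigma,d$ but never on $\delta$, and consider the dyadic cubes of side $2^{-j\lambda}r$, $j=0,1,\dots$, down to scale $\delta$. Call a cube $Q$ of side $L$ \emph{traversed} by $\mathcal{C}$ if $\mathcal{C}$ contains a sub-arc inside the concentric cube of side $3L$ joining $Q$ to the boundary of that larger cube, so that this sub-arc has diameter $\ge L$. Starting from a traversed cube of side $r$, one descends the tree of traversed subcubes, at each step splitting the current mass equally among the traversed children at the next scale, and freezing the mass as an atom on $\mathcal{C}$ whenever a branch produces too few children. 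Two facts must be verified: (i) a \emph{lower} bound on branching, namely that all but a controlled fraction of traversed cubes have at least $b>2^{\lambda}$ traversed children; this makes the support a Cantor-like subset of $\mathcal{C}$ of dimension $d_{\min}:=(\log_{2}b)/\lambda>1$ and yields the energy bound via the elementary geometric-series estimate $\sum_{j}2^{-j\lambda(d_{\min}-s)}<\infty$, valid for $s<d_{\min}$; and (ii) that the frozen atoms carry negligible total mass. Both reduce to bounding the probability that a traversed cube is \emph{deficient} --- traversed but with fewer than $b$ traversed children.

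That probability is controlled using \textbf{H2} together with a deterministic \emph{geometric lemma}: if a curve traverses a cube of side $L$ while meeting at most $M=M(\lambda,d)$ of its subcubes of side $2^{-\lambda}L$, then it contains at least $c(\lambda,d)\,2^{\lambda}$ pairwise well separated crossings, in the long direction, of cylinders of aspect ratio $\sigma$ and common length comparable to $2^{-\lambda}L$. Heuristically, meeting few subcubes confines the traversing sub-arc to a ``thin worm'' of diameter $\ge L$, which is necessarily strung out of $\sim 2^{\lambda}$ disjoint crossings of cylinders of the neck scale $2^{-\lambda}L$; crucially these cylinders all sit at one scale, so a single application of (\ref{eq:H2}) bounds the probability of a deficient traversal by $\const\rho^{c(\lambda,d)2^{\lambda}}$, which tends to $0$ as $\lambda\to\infty$. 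Fixing $\lambda$ (hence $b$, $M$, $d_{\min}$) so that this quantity is small, and then summing these estimates over the traversed cubes at each scale and over the $O(\log(r/\delta))$ scales --- using that a traversed cube has at most $\const(d)$ traversed children and that \textbf{H2} already forces the traversed structure to be sparse at each scale --- one obtains an event of probability $>1-\varepsilon$ on which every curve of diameter $\ge r$ has a branching tree satisfying (i) and (ii), and therefore $T_{s,r;\delta}\ge\mu$.

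I expect the main obstacle to be precisely this global bookkeeping: combining the per-cube, per-scale applications of \textbf{H2} into a bound that is \emph{simultaneously} uniform over all $\delta$, over all curves in $\mathcal{F}_{\delta}$, and compatible with the choice $d_{\min}>1$. The tension is that admitting more dyadic scales (needed to make the skeleton genuinely higher-dimensional) inflates the union bound over possible traversal-trees, so one must either count these trees economically --- exploiting that \textbf{H2} constrains the number of well separated traversed cubes at each scale --- or coarse-grain and argue scale-by-scale, and then check that $d_{\min}$ can still be kept strictly above $1$. Making all constants in this argument explicit as functions of $\rho$, in particular keeping $b$ as close to $2^{\lambda}$ as the geometry permits as $\rho\to1$, is what will eventually yield the quantitative refinement in Theorem \ref{thm:exact roughness}; for the qualitative statement here it suffices that, for each fixed $0<\rho<1$, the parameters $\lambda,b,M$ can be chosen to close the estimate. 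The reduction of the capacity bound to an energy estimate and the verification of the geometric lemma are, by contrast, routine --- the former is immediate from (\ref{eq: cap def}), and the latter is elementary, if somewhat intricate, geometry in $\rr^{d}$.
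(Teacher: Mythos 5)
You have correctly identified the shape of the argument (a Frostman-type multiscale construction whose probabilistic input is extracted from \textbf{H2}), but the step you yourself flag as "the main obstacle" is where the proof actually breaks, and the patch you sketch does not close it. Your bad event is single-scale: "a traversed cube is deficient", with probability $\le C\rho^{c(\lambda,d)2^{\lambda}}$ --- a constant $p>0$ depending on $\lambda$ but \emph{not} on the scale index. The number of dyadic generations between $r$ and $\delta$ is $\approx\lambda^{-1}\log_{2}(r/\delta)$, which is unbounded as $\delta\to0$, while the theorem demands a single $\mu$ working for all $\delta$. Consequently the event "no traversed cube at any scale is deficient" has probability tending to $0$ as $\delta\to0$, and the frozen-atom device does not rescue the construction: an atom of mass $a$ contributes $a^{2}\delta^{-s}$ to the $\delta$-truncated energy in (\ref{eq: cap def}), and with $\sim p\,b^{j}$ atoms of mass $\sim b^{-j}$ frozen at generation $j$ the diagonal contribution is $\sim p\,\delta^{-s}\sum_{j}b^{-j}$, which blows up as $\delta\to0$. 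Either way the energy bound is not uniform in $\delta$, which is the whole content of the theorem. (A secondary issue: your geometric lemma needs $M$ essentially equal to the minimal spanning count $\sim2^{\lambda}$, and even then extracting crossings of cylinders of the prescribed aspect ratio $\sigma$ from a curved chain of subcubes is not routine; but this is repairable, whereas the bookkeeping is not, as stated.)

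The mechanism that actually closes the estimate --- both in Aizenman--Burchard and in the version used in this paper (Definitions \ref{def:straight run}--\ref{def:k, gamma sparse}, Lemmas \ref{lem:actually useful thing}--\ref{lem:prob bound on existence of straigh runs}, Proposition \ref{prop:sparcity H2}) --- is to make the bad event genuinely multi-scale: what must be excluded is not a deficiency at some scale (that is essentially unavoidable for small $\delta$) but a \emph{nested} chain of straight runs through at least half of the first $2n$ scales. A single straight run at scale $L$, being a crossing of one cylinder of aspect ratio $\sim\sqrt{\gamma}$, already decomposes into $\sim\sqrt{\gamma}/\sigma$ well separated $\sigma$-cylinders, so a nested chain of length $n$ costs $\rho^{cn\sqrt{\gamma}}$, which for $\gamma$ large beats the positional entropy $\gamma^{2dk_{n}}K^{n}$ of the chain --- this is exactly the summable bound of Lemma \ref{lem:prob bound on existence of straigh runs} and (\ref{eq:really long ineqaulity}), and it is what makes the failure probability decay in $k_{0}$ uniformly in $\delta$. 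The complementary sparseness event then yields the capacity lower bound \emph{deterministically} via Lemma \ref{lem:actually useful thing}, every branch of your tree being allowed to be deficient only at a sparse set of scales rather than at none. Any correct completion of your argument converges to this nested formulation; the per-scale union bound cannot be made to work. One point in your favour: your parenthetical remark that one needs $s<d_{\min}$ (not $s>d_{\min}$ as in the statement) for the energy to converge is correct and flags what appears to be a typo in the theorem as quoted.
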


As stated before, the intuition of the theorem is that as we take
$\delta$ to be smaller and smaller, the random curves will resemble
a set of Hausdorff dimension $>d_{min}$ more and more. This intuition
can be formalized in terms of a scaling limit as in \cite{aizemanburchard},
however we will not use nor need it here. Our goal in this section
will be to give an improvement of the result, by giving quantitative
bounds for the minimal dimension $d_{min}$ and in addition probabilistic
bounds on how the random variables $T_{s,r;\delta}$ are bounded away
from zero. Both of these will be given in terms of the parameter $\rho$
from hypothesis H2\textbf{. }We shall prove the following:
\begin{thm}
\label{thm:exact roughness} Let $\mathcal{F}=(\mathcal{\mathcal{F}_{\delta}})_{0<\delta\leq1}$
be a system of random curves in a compact subset $\Lambda\subset\rr^{d}$
with a short variable cutoff. Suppose $\mathcal{F}$ satisfies hypothesis
\textbf{H2} with parameters $\rho=1-\varepsilon$, where $0<\varepsilon<\frac{1}{10}$,
and $\sigma>0$. Then there exists some constant $\alpha$ satisfying
\begin{equation}
\alpha\geq\kappa\frac{\varepsilon^{2}}{\log\left(1/\varepsilon\right)^{3}},\label{eq:alphabound1}
\end{equation}
so that for all $\delta>0$ and for all $\mu>0$
\begin{equation}
\pp_{\delta}(T_{1+\alpha,1;\delta}<\mu)\leq\mu^{\frac{K}{\log(1/\varepsilon)}},\label{eq:gbound1}
\end{equation}
with $\kappa,K>0$ depending only on $\sigma$ and $\Lambda$.
\end{thm}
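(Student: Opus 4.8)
The plan is to mirror the structure of the Aizenman--Burchard argument for Theorem \ref{thm:roughness}, but to carry every constant explicitly as a function of $\varepsilon$ (where $\rho = 1-\varepsilon$). The heart of the matter is a recursive/multiscale estimate: fix a large integer $N$ (to be optimized, roughly $N \sim 1/\varepsilon$ up to logarithmic corrections), and for a curve $\mathcal{C}\in\mathcal{F}_\delta$ crossing a ball of radius $r$, compare the number of sub-boxes of side $r/N$ that $\mathcal{C}$ must enter to the number of sub-boxes of side $r$. If $\mathcal{C}$ enters very few of the smaller boxes, then it makes a long ``traversal'' across a skinny corridor, which we cover by $O(1)$ well-separated cylinders of aspect ratio $\sigma$ and length $\gtrsim r/N$; hypothesis \textbf{H2} bounds the probability of all such cylinders being crossed by $C\rho^{m}$. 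Summing over the (polynomially many in $N$) ways to choose which smaller boxes are entered, one gets that the event ``$\mathcal{C}$ enters at most $\lambda N$ of the $N^{d}$ small boxes, for some box at scale $r$'' has probability at most (number of choices)$\times \rho^{c(\lambda)N}$. Choosing $\lambda$ and $N$ so that $\rho^{cN} = (1-\varepsilon)^{cN}$ beats the combinatorial factor $N^{O(\lambda N)}$ forces $\lambda \sim \varepsilon/\log(1/\varepsilon)$ and yields a branching lower bound: at each scale the curve enters at least $(1+\alpha')N$ small boxes with overwhelming probability, where $1+\alpha' = N^{?}\cdot$ (effective multiplicity). Taking logs, $\alpha \sim \log(\lambda N)/\log N \sim \varepsilon^{2}/\log(1/\varepsilon)^{3}$ after optimizing $N$, which is exactly the bound \eqref{eq:alphabound1}.

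Concretely I would proceed in the following steps. First, a \emph{single-scale lemma}: there is an absolute constant $c_0$ and a function $p(\varepsilon)$ such that for any ball $B$ of radius $r \ge \delta$, the probability that some curve in $\mathcal{F}_\delta$ crosses $B$ but enters fewer than $k$ of the $N^{d}$ dyadic-type sub-boxes of side $r/N$ is at most $\binom{N^{d}}{k} C\rho^{c_0(N-k)}$; this is pure geometry (a crossing missing many sub-boxes must traverse a tube, which contains $\gtrsim N-k$ disjoint aspect-$\sigma$ cylinders) plus one application of \eqref{eq:H2}. Second, \emph{optimize} the free parameters: set the right-hand side to be summable and solve for the largest $k = (1+\alpha)N^{?}$ — here one balances $\log\binom{N^{d}}{k}\approx k\log(N^{d}/k)$ against $c_0(N-k)\log(1/\varepsilon)^{-1}$... wait, against $c_0(N-k)\varepsilon$ since $\log(1/\rho)\approx\varepsilon$; this is where the $\varepsilon^2/\log(1/\varepsilon)^3$ emerges and where I expect the bookkeeping to be most delicate. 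Third, \emph{iterate over scales} $r = 1, 1/N, 1/N^2, \dots$ down to $\delta$: a union bound over the $O(N^{d})$ boxes at each of the $O(\log_N(1/\delta))$ scales, combined with the single-scale estimate, shows that with probability $\ge 1-\mu^{K/\log(1/\varepsilon)}$ every crossing curve enters a $(1+\alpha)$-fold-growing number of boxes at every scale, so that $\mathcal{N}(\mathcal{C}, \delta) \ge \mu^{-1}\delta^{-(1+\alpha)}$ on that event. Fourth, \emph{convert counting to capacity} via Claim \ref{claim:cap properties}(ii): $\capi_{1+\alpha;\delta}(\mathcal{C}) \ge \mathcal{N}(\mathcal{C},\delta)\,\delta^{1+\alpha} \ge \mu^{-1}$... run the inequality in the direction that gives $T_{1+\alpha,1;\delta} \ge \mu$ on the good event, i.e. $\pp_\delta(T_{1+\alpha,1;\delta} < \mu) \le \mu^{K/\log(1/\varepsilon)}$, which is \eqref{eq:gbound1}. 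The tail exponent $K/\log(1/\varepsilon)$ comes out of the slack between the per-scale failure probability (which is $\rho$ to a power linear in $N$, hence of size $e^{-cN\varepsilon}\sim$ a power of $\mu$ with exponent $\propto 1/\log(1/\varepsilon)$ after the scale-counting geometric sum) and must be tracked carefully through the multiscale union bound.

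The main obstacle, and the step deserving the most care, is the \emph{joint optimization} in the second step: one must simultaneously choose $N$ (the branching factor at each scale), $k=k(N,\varepsilon)$ (the guaranteed number of occupied sub-boxes), and reconcile the resulting $\alpha = \log(k/N^{?})/\log N$... more precisely $\alpha$ is determined by $k = N^{1+\alpha}$ in the $d=2$ setting after accounting for the cylinder geometry, and the constraint that the combinatorial overhead $\binom{N^d}{k}$ be dominated by $\rho^{c_0(N-k)}$ summed over scales. Pushing $N$ too large makes $\binom{N^d}{k}$ unmanageable; too small and $\rho^{c_0 N}$ is not small enough to absorb it; the sweet spot $N\asymp \varepsilon^{-1}\log(1/\varepsilon)$ (or similar) is what produces the $\varepsilon^2/\log(1/\varepsilon)^3$ rate, and getting the logarithmic powers exactly right — rather than, say, $\varepsilon^2/\log(1/\varepsilon)^2$ or $\varepsilon^2$ — requires being honest about the $\log(N^d/k)$ factor in Stirling's approximation and about the $\log(1/\delta)$ factor from the number of scales. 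A secondary technical point is ensuring the cylinders extracted from a ``tube traversal'' genuinely satisfy the well-separated condition of Definition \ref{def: well seperated} with the prescribed aspect ratio $\sigma$; this forces the sub-box scale ratio $N$ to exceed a $\sigma$-dependent threshold, which is harmless but is the source of the $\sigma$-dependence of $\kappa$ and $K$. Everything else — the geometry of the single-scale lemma, the union bounds, and the capacity conversion via Claim \ref{claim:cap properties} — is routine once these choices are pinned down.
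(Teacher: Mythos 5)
Your overall philosophy --- a multiscale ``straight versus wiggly'' dichotomy in which hypothesis \textbf{H2} penalizes the straight parts and an entropy-versus-probability balance fixes the scale ratio --- is the right one, and is how the paper proceeds; but the paper does so by quantifying the existing Aizenman--Burchard straight-run machinery (Definition \ref{def:straight run}, Lemma \ref{lem:actually useful thing}, Lemma \ref{lem:prob bound on existence of straigh runs}) rather than by re-deriving it through box counting, and your box-counting version has gaps that I do not believe can be patched without reverting to that machinery. The most serious is your step four: Claim \ref{claim:cap properties}(ii) reads $\mathcal{N}(A,\ell)\geq\capi_{s;\ell}(A)\cdot\ell^{-s}$, so a lower bound on the covering number at the single scale $\delta$ does \emph{not} lower-bound the capacity --- a curve can have covering number $\delta^{-(1+\alpha)}$ with all of that mass packed into a ball of radius $\delta^{1/2}$, in which case its $s$-energy blows up and $\capi_{s;\delta}\to0$. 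Lower-bounding $\capi_{s;\delta}$ requires building a probability measure on the curve with controlled energy, i.e.\ uniform spreading at \emph{every} scale simultaneously; that is exactly the content of Aizenman--Burchard's Lemmas 5.2/5.4 (Lemma \ref{lem:actually useful thing} here), the one genuinely nontrivial deterministic input, and it is absent from your outline. That lemma is also what forces the straight-run cylinders at scale $L$ to have radius of order $L/\sqrt{\gamma}$ rather than $L/\gamma$, so that one straight run yields only $\asymp\sqrt{\gamma}/\sigma$ well-separated aspect-$\sigma$ cylinders; this is precisely why the balance reads $\gamma^{O(d)}e^{-c\varepsilon\sqrt{\gamma}}<1$, hence $\gamma\asymp\varepsilon^{-2}\log^{2}(1/\varepsilon)$ and $\alpha\asymp1/(\gamma\log\gamma)\asymp\varepsilon^{2}/\log(1/\varepsilon)^{3}$. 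Your $N\asymp\varepsilon^{-1}\log(1/\varepsilon)$ plays the role of $\sqrt{\gamma}$, and your exponent is asserted rather than derived.

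The second gap is in the single-scale lemma itself. To gain dimension you need the curve to enter at least $k>N$ sub-boxes, but then the exponent $c_{0}(N-k)$ in your bound is negative and the estimate is vacuous; while for $k=\lambda N$ with $\lambda<1$ the conclusion is vacuous instead, since any crossing of a ball of radius $r$ automatically enters $\gtrsim N$ boxes of side $r/N$. Moreover the combinatorial factor $\binom{N^{d}}{k}=e^{\Theta(k\log N)}$ is exponential in $N$ and cannot in general be absorbed by $\rho^{O(N)}=e^{-O(\varepsilon N)}$; the paper pays only a polynomial entropy $\gamma^{O(d)}$ per scale because it union-bounds over lattice positions of a \emph{single} cylinder per scale, not over subsets of sub-boxes, and the per-scale geometric gain in Aizenman--Burchard is only an additive $O(1)$ (from $m$ to $\beta=\sqrt{m(m+1)}$), compounded through the energy estimate --- not a jump from $N$ to $N^{1+\alpha}$. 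Finally, ``enters few sub-boxes $\Rightarrow$ traverses a tube containing $\gtrsim N-k$ well-separated aspect-$\sigma$ cylinders'' is false as stated (an L-shaped or staircase crossing enters $O(N)$ boxes without lying in any single tube); the dichotomy must be formulated one cylinder crossing at a time, which is what the straight-run definition accomplishes.
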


\subsection{Straight Runs}

\textbf{Policy regarding constants: }During proofs in this section,
we will use $K$ (or $C$) and $\kappa$ (or $c$) to denote universal
constants depending only on $\sigma$ and $\Lambda$, which may increase
or decrease respectively from line to line.

Our goal in this section will be to give a lower bound for the capacity
in terms of a function on a random variable, $k_{0;s}$.
\begin{defn}
\label{def:straight run}Let $\gamma>1$ be some constant (called
the ``scaling factor''), $\mathcal{C}$ a curve in $\rr^{d}$. A\emph{
$\gamma$-straight run} at scale $L$ for $\mathcal{C}$ is a crossing
of $\mathcal{C}$ from one spherical ``face'' to the other of some
cylinder of length $L$ and radius $\frac{9}{2\sqrt{\gamma}}L$. We
say a straight run is \emph{nested} in another straight run if the
cylinder of the first is contained in the cylinder of the second one. 
\end{defn}

\begin{defn}
\label{def:k, gamma sparse}For a curve $\mathcal{C}$ in $\rr^{d}$,
positive integer $k_{0}$, scaling factor $\gamma>1$, and length
$\ell>0$, we say straight runs in $\mathcal{C}$ are $(\gamma,k_{0})$-sparse
down to length $\ell>0$, if for any $n\geq\frac{1}{2}k_{0}$ and
positive integers $1\leq k_{1}<k_{2}<...<k_{n}\leq2n$, $\gamma^{-k_{n}}\geq\ell$,
there is no nested sequence of $\gamma$-straight runs at scales $\gamma^{-k_{1}},\gamma^{-k_{2}},...,\gamma^{-k_{n}}$.
\end{defn}

Aizenman and Burchard proved the following theorem relating straight
runs and their sparsity to the Hausdorff dimension (\cite{aizemanburchard},
Theorem 5.1)
\begin{thm}
\label{thm:haus dim st runs} Let $\gamma>1$ be a scaling factor
and let $m=\left\lfloor \gamma\right\rfloor $. If in some curve $\mathcal{C}$
in $\rr^{d}$ straight runs are $(\gamma,k_{0})$-sparse (down to
all scales), then:
\begin{equation}
\dim_{\mathcal{H}}(\mathcal{C})\geq\frac{\log\left(m(m+1)\right)}{2\log(\gamma)},\label{eq:haus dim bound}
\end{equation}
where $\dim_{\mathcal{H}}(\mathcal{C})$ denotes the Hausdorff dimension
of $\mathcal{C}$.
\end{thm}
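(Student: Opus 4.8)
The plan is to deduce the Hausdorff-dimension bound from a uniform capacity (equivalently, Frostman) estimate. Write $m=\lfloor\gamma\rfloor$ and $s_{0}:=\frac{\log(m(m+1))}{2\log\gamma}$, so that $\gamma^{2s_{0}}=m(m+1)$. First I would reduce, via the mass distribution principle, to constructing a Borel probability measure $\nu$ supported on $\mathcal{C}$ together with a constant $\const$ such that
\begin{equation}
\nu\big(B(x,r)\big)\ \le\ \const\, r^{\,s_{0}}\qquad\text{for all }x\text{ and all }0<r\le1;\label{eq:haus-frostman}
\end{equation}
indeed \eqref{eq:haus-frostman} forces $\mathcal{H}^{s_{0}}(\mathcal{C})>0$ and hence $\dim_{\mathcal{H}}(\mathcal{C})\ge s_{0}$. (Alternatively, by Claim~\ref{claim:cap properties}(iii) it would be enough to obtain $\inf_{\ell>0}\capi_{s;\ell}(\mathcal{C})>0$ for each $s<s_{0}$, which the same construction yields after estimating the truncated $s$-energy of $\nu$.) One may assume $\gamma$ is large enough that the cylinders in Definition~\ref{def:straight run} are genuinely thin; for smaller $\gamma$ the hypothesis is vacuous (short, very fat cylinders are always crossed) or the asserted bound is $\ge d$, and there is nothing to prove.

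To build $\nu$, I would organize $\mathcal{C}$ into a rooted branching tree. Working at the scales $\ell_{j}=\gamma^{-j}$, the generation-$j$ nodes are sub-arcs of $\mathcal{C}$, each crossing an associated cylinder of length $\ell_{j}$, with the sub-arcs of a given generation kept spatially separated; $\nu$ is then the natural measure that splits mass equally among children and is carried by the limiting Cantor-type set. The engine of the whole argument is a geometric branching estimate, which is essentially a two-scale count: a crossing of a length-$\ell$ cylinder contains at least $m$ suitably separated sub-crossings of length-$\ell/\gamma$ cylinders, and at least $m+1$ of them whenever the crossing contains no $\gamma$-straight run at scale $\ell$ in the sense of Definition~\ref{def:straight run} — the absence of a straight run is precisely what forces the crossing to ``turn'' enough to produce the extra sub-crossing. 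Thus, along a fixed branch, the branching number $b_{j}$ at generation $j$ satisfies $b_{j}\ge m$ always, and $b_{j}\ge m+1$ at every \emph{turn scale} $j$, i.e.\ one at which no $\gamma$-straight run occurs along that branch. I would arrange, moreover, that whenever a straight run is recorded at some scale the recursion is continued inside that run's cylinder, so that the straight runs encountered along one branch form a genuinely nested sequence.

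The role of the hypothesis is then to show that turn scales have density at least $\tfrac12$ along every branch. Let $s_{1}<s_{2}<\cdots$ be the scales carrying straight runs along a given branch; since these runs are nested, if $s_{n}\le 2n-1$ for some $n\ge\tfrac12 k_{0}$ then $1\le s_{1}<\cdots<s_{n}\le 2n$ would be a forbidden nested sequence of $\gamma$-straight runs, contradicting $(\gamma,k_{0})$-sparseness; hence $s_{n}\ge 2n$ for all $n\ge\tfrac12 k_{0}$. It follows that at most $\tfrac12 N+k_{0}$ of the first $N$ scales carry a straight run, so at least $\tfrac12 N-k_{0}$ are turn scales, and therefore
\begin{equation}
\prod_{j=1}^{N}b_{j}\ \ge\ m^{N}\Big(\tfrac{m+1}{m}\Big)^{N/2-k_{0}}\ =\ \Big(\tfrac{m}{m+1}\Big)^{k_{0}}\big(m(m+1)\big)^{N/2}\ =\ c_{0}\,\gamma^{\,s_{0}N},\label{eq:haus-branchprod}
\end{equation}
where $c_{0}:=(m/(m+1))^{k_{0}}>0$ and I used $\gamma^{2s_{0}}=m(m+1)$.

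To finish, I would read off \eqref{eq:haus-frostman} from \eqref{eq:haus-branchprod}: a generation-$N$ node has $\nu$-mass $\prod_{j\le N}b_{j}^{-1}\le c_{0}^{-1}\gamma^{-s_{0}N}$ and diameter of order $\gamma^{-N}$, and, because siblings are separated (and branches at coarser generations are separated by even more), a ball of radius $r$ with $\gamma^{-(N+1)}<r\le\gamma^{-N}$ meets only boundedly many generation-$N$ nodes; hence $\nu(B(x,r))\le\const\,\gamma^{-s_{0}N}\le\const'\,r^{s_{0}}$, which is \eqref{eq:haus-frostman}. The main obstacle is the geometric branching estimate itself: quantifying precisely how the absence of a $\gamma$-straight run at a scale forces an extra separated sub-crossing at the next scale, with exact bookkeeping of the aspect-ratio constant $\tfrac{9}{2\sqrt{\gamma}}$ of Definition~\ref{def:straight run}, so that the branching numbers come out to the sharp values $m$ and $m+1$ rather than a constant times $\gamma$, and uniformly enough that the product in \eqref{eq:haus-branchprod} incurs no per-scale loss. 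This is the content of the cylinder geometry in \cite{aizemanburchard}; the remaining steps — choosing the initial cylinder, nesting the recursion inside recorded straight runs, and converting \eqref{eq:haus-branchprod} into \eqref{eq:haus-frostman} — are routine.
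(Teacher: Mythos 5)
This theorem is quoted in the paper from Aizenman--Burchard \emph{without proof}; the surrounding text treats it as a corollary of the $\ell$-uniform capacity estimate of Lemma~\ref{lem:actually useful thing} (AB's Lemmas 5.2 and 5.4) via Claim~\ref{claim:cap properties}(iii), and indeed immediately notes that this theorem ``is only of use in a scaling limit setting'' while the capacity bound is the form that survives truncation at $\ell>0$. Your proposal instead reconstructs the result from scratch by a Frostman/branching-tree construction, which is essentially the mechanism inside AB's Lemma 5.2, so you and the paper are secretly invoking the same engine; the paper simply prefers to carry the $\ell$-dependent capacity statement as the intermediate object because that is what is usable for piece-wise-linear curves, whereas your Cantor-type measure lives on an unbounded iteration and therefore yields only the scaling-limit statement. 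Your combinatorics are correct modulo a harmless off-by-one: the sparseness hypothesis already forbids $1\le s_{1}<\cdots<s_{n}\le 2n$, so the clean conclusion is $s_{n}\ge 2n+1$ for $n\ge k_{0}/2$ (not $s_n\ge 2n$); either way the turn-scale density $\ge 1/2$ and the product bound $\prod_{j\le N}b_{j}\gtrsim (m(m+1))^{N/2}=\gamma^{s_{0}N}$ follow as you write. The genuine bottleneck is the one you flag and then defer: the geometric branching lemma asserting that a crossing with no $\gamma$-straight run at a given scale forks into $m+1$ well-separated sub-crossings of the next-finer scale while any crossing forks into at least $m$, together with the claim that the recursion can be steered so that recorded straight runs along a single branch are genuinely nested, with the aspect-ratio $9/(2\sqrt{\gamma})$ of Definition~\ref{def:straight run} delivering the sharp constants $m=\lfloor\gamma\rfloor$ and $m+1$ with no per-scale loss. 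That estimate is the entire substance of AB's Section 5; in a self-contained treatment it cannot be treated as routine, and for the paper's purposes the cleaner derivation is simply to feed Lemma~\ref{lem:actually useful thing} into Claim~\ref{claim:cap properties}(iii).
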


Unfortunately for us, this theorem is only of use in a scaling limit
setting, not the piece-wise linear curves we are interested in. Luckily
though, the above theorem is actually implied from a more general
fact, also proven in \cite{aizemanburchard} (Lemma 5.2 and Lemma
5.4, equation (5.22)).
\begin{lem}
\label{lem:actually useful thing} Let $\mathcal{C}$ be a curve in
$\rr^{d}$ in which straight runs are $(\gamma,k_{0})$-sparse down
to scale $\ell$. Then:
\begin{equation}
\capi_{s;\ell}(\mathcal{C})\geq\left(\left(\frac{\gamma}{m}-1\right)\cdot\diam(\mathcal{C})\right)^{s}\left(\gamma^{s\cdot k_{0}}+\frac{\beta}{1-\gamma^{s}/\beta}\right)^{-1},\label{eq:straight runs inequality}
\end{equation}
where $m$ is any integer in $[\gamma/2,\gamma]$, $\beta=\sqrt{m(m+1)}$,
and $\gamma^{s}<\beta$.
\end{lem}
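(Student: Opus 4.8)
\textbf{Proof proposal for Lemma \ref{lem:actually useful thing}.}

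The plan is to chain the capacity comparison across scales, exploiting the sparsity hypothesis at each level. Fix an integer $m\in[\gamma/2,\gamma]$ and set $\beta=\sqrt{m(m+1)}$, so that $m(m+1)=\beta^{2}$ and by the assumption $\gamma^{s}<\beta$. I would first recall, from the construction underlying Theorem \ref{thm:haus dim st runs} in \cite{aizemanburchard}, the hierarchical covering of $\mathcal{C}$: at generation $j$ one covers $\mathcal{C}$ by at most (roughly) $\beta^{2}$ times the previous count of balls of radius $\sim\gamma^{-j}\diam(\mathcal{C})$, \emph{except} that whenever a $\gamma$-straight run occurs at the relevant scale one is forced to refine less efficiently. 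The key combinatorial input — this is Lemma 5.2 of \cite{aizemanburchard} — is that if straight runs are $(\gamma,k_{0})$-sparse, then along any branch of the covering tree the number of generations at which the ``bad'' (straight-run) branching rate applies is at most $k_{0}$ out of every window, so the total number of balls of diameter $\ell_{j}:=\gamma^{-j}(\gamma/m-1)\diam(\mathcal{C})$ needed is at most $C_{j}$ where $C_{j}$ grows like $\beta^{j}$ but with a correction factor accounting for the at-most-$k_{0}$ exceptional generations, i.e. $C_{j}\le \beta^{j}\cdot (\gamma/\beta)^{k_{0}}\cdot(\text{something summable})$.

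Next I would convert this covering count into a capacity lower bound via Claim \ref{claim:cap properties}(i): for the covering $(B_{i})$ at the stopping generation (the last $j$ with $\ell_{j}\ge\ell$), the inequality $\sum_{i}\diam(B_{i})^{s}\ge \capi_{s;\ell}(\mathcal{C})$ runs the wrong way, so instead I need the dual bound. The correct route is the standard energy/mass-distribution argument: build a probability measure $\mu$ on $\mathcal{C}$ by distributing mass uniformly (generation by generation) among the $C_{j}$ covering balls, and estimate the $(s,\ell)$-energy $\iint \max(|x-y|,\ell)^{-s}\mu(dx)\mu(dy)$ by splitting according to the generation at which $x$ and $y$ first lie in distinct balls. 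A pair separating at generation $j$ contributes at distance $\gtrsim\ell_{j}$ and has $\mu\times\mu$-mass $\lesssim 1/C_{j}$ times the branching factor at that generation; summing over $j$ gives an energy bounded by $\sum_{j} \gamma^{sj}/C_{j}\cdot(\text{local factor})$, which — using $C_{j}\gtrsim \beta^{j}(\gamma/\beta)^{-k_{0}}$ — is a geometric series in $\gamma^{s}/\beta<1$ plus the $k_{0}$ ``defect'' generations each contributing a factor $\gamma^{s}$, i.e. bounded by $\big(\gamma^{s k_{0}}+\frac{\beta}{1-\gamma^{s}/\beta}\big)\cdot\big((\gamma/m-1)\diam(\mathcal{C})\big)^{-s}$ after tracking the normalization $\ell_{0}=(\gamma/m-1)\diam(\mathcal{C})$. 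Taking reciprocals yields exactly \eqref{eq:straight runs inequality}.

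Concretely the steps are: (1) set up the $(\gamma,m,\beta)$ hierarchical cover of $\mathcal{C}$ down to scale $\ell$, with $\ell_{0}=(\gamma/m-1)\diam(\mathcal{C})$ and $\ell_{j}=\gamma^{-j}\ell_{0}$; (2) invoke the sparsity hypothesis to bound, along every tree branch, the number of exceptional (straight-run) generations by $k_{0}$, hence bound the generation-$j$ ball count by $C_{j}\le (\gamma/\beta)^{k_{0}}\beta^{j}$ up to summable corrections (this is the content of Lemmas 5.2 and 5.4 of \cite{aizemanburchard}); (3) define the uniform mass-distribution measure $\mu$ on the stopping-generation cover; (4) bound the $(s,\ell)$-energy of $\mu$ by the geometric-type sum $\sum_{j}\gamma^{sj}/C_{j}$, carefully separating the at-most-$k_{0}$ defect terms (each contributing $\gamma^{s}$, total $\le \gamma^{s k_{0}}$) from the geometric tail $\sum_{j\ge 0}(\gamma^{s}/\beta)^{j}=\frac{1}{1-\gamma^{s}/\beta}=\frac{\beta}{\beta-\gamma^{s}}$, which I rewrite as $\frac{\beta}{1-\gamma^{s}/\beta}$ after absorbing a $\beta$; (5) use $\capi_{s;\ell}(\mathcal{C})^{-1}\le \text{energy}(\mu)$ and rearrange.

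The main obstacle is step (2)–(4): matching the bookkeeping so that the defect contribution appears as the clean additive term $\gamma^{s k_{0}}$ rather than, say, $\binom{j}{k_{0}}\gamma^{s k_{0}}$ or a product of $k_{0}$ \emph{distinct-scale} factors. This requires using the sparsity definition in its precise window form (no nested straight runs at scales $\gamma^{-k_{1}},\dots,\gamma^{-k_{n}}$ with $k_{i}\le 2n$ for $n\ge k_{0}/2$), which is exactly tuned so that exceptional generations cannot cluster or proliferate — so the count $(\gamma/\beta)^{k_{0}}$ is genuinely a single global factor along each branch. Getting the constants $\gamma/m-1$ and the radius $\tfrac{9}{2\sqrt{\gamma}}L$ from Definition \ref{def:straight run} to align with the geometry of the cover (so that a ball of the next generation is not swallowed by a straight-run cylinder) is the remaining delicate point, but it is geometric rather than probabilistic and follows the template in \cite{aizemanburchard}. \qed
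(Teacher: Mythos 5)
The paper does not prove this lemma itself — it is imported verbatim from Aizenman--Burchard (their Lemma~5.2, Lemma~5.4, and equation~(5.22)), and the surrounding text only explains why this pre-scaling-limit version is what is needed. Your proposal is therefore not being compared against a proof in this paper but against what you reconstruct the cited AB argument to be. Your identification of the correct framework is sound: the lower bound on $\capi_{s;\ell}$ does indeed come from the mass-distribution / energy method, applied to a measure built on a hierarchical family of well-separated points on $\mathcal{C}$, with the sparsity hypothesis controlling how many scales can fail to branch.

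That said, the actual bookkeeping in your sketch does not reproduce the stated inequality, and you flag this yourself. Two concrete issues: first, you repeatedly write "at most" for the per-generation branching (``covers $\mathcal{C}$ by at most \dots $\beta^2$ times the previous count'', ``the total number of balls \dots is at most $C_j$''), whereas the argument requires a \emph{lower} bound on the antichain/net size at each generation — it is the abundance of well-separated points that makes the uniform measure's energy small. Second, feeding your proposed lower bound $C_j\gtrsim\beta^j(\gamma/\beta)^{-k_0}$ into $\sum_j\gamma^{sj}/C_j$ yields a \emph{multiplicative} defect $(\gamma/\beta)^{k_0}(1-\gamma^s/\beta)^{-1}$, not the \emph{additive} form $\gamma^{sk_0}+\beta/(1-\gamma^s/\beta)$ in \eqref{eq:straight runs inequality}; the ``absorbing a $\beta$'' and the ``each contributing a factor $\gamma^s$'' remarks do not bridge that gap. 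Since the paper simply cites these steps to AB, the honest assessment is that your outline orients a reader correctly but does not constitute a proof and contains slips that would mislead someone trying to fill in the details; one would still need to go to AB's Lemmas 5.2 and 5.4 to see how the separation constant $\gamma/m-1$ and the split between the $\le k_0$ degenerate generations and the geometric tail are actually extracted.
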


Note that what matters to us most is the smallest $k_{0}$ for which
straight runs are $(\gamma,k_{0})$-sparse. Our goal in this section
will be to prove a relation between hypothesis \textbf{H2 }and the
distribution of this smallest $k_{0}$. 
\begin{lem}
\label{lem:prob bound on existence of straigh runs} Let $\mathcal{F}=(\mathcal{\mathcal{F}_{\delta}})_{0<\delta\leq1}$
be a system of random curves with short variable cutoff in $\Lambda\subseteq\rr^{d}$
satisfying hypothesis \textbf{H2 }with parameters $\rho=1-\varepsilon,\sigma$.
Suppose $\gamma>\max(4d,\sigma^{2})$. Then for small enough $\varepsilon$
there exists constants $K_{0},K_{1}>0$ depending only on $\sigma$
and $\Lambda$ for which:
\begin{equation}
\pp_{\delta}\left(\begin{array}{c}
\textup{in some curve in \ensuremath{\mathcal{F}_{\delta},} there is a nested sequence of straight runs}\\
\textup{at scales }\gamma^{-k_{1}},\gamma^{-k_{2}},...,\gamma^{-k_{n}}
\end{array}\right)\label{eq:straightrunboundprob}
\end{equation}

\[
\leq K_{1}\gamma^{2dk_{n}}e^{K_{1}n-K_{0}\sqrt{\gamma}\varepsilon n},
\]
for any increasing sequence of positive integers $k_{1},\ldots,k_{n}$
such that $\gamma^{-k_{n}}\geq\delta$.
\end{lem}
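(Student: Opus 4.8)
The estimate we need bounds the probability that a \emph{specific} nested sequence of straight runs appears inside one of the curves in $\mathcal{F}_\delta$, and we want to extract the exponential gain $e^{-K_0\sqrt{\gamma}\varepsilon n}$ from hypothesis \textbf{H2}. The first step is to reduce the event in \eqref{eq:straightrunboundprob} to an \textbf{H2}-type event, i.e.\ to a collection of well-separated cylinders of aspect ratio $\sigma$ that are all crossed. A single $\gamma$-straight run at scale $\gamma^{-k}$ is the crossing of a cylinder of length $\gamma^{-k}$ and radius $\tfrac{9}{2\sqrt\gamma}\gamma^{-k}$, which is far too fat (its aspect ratio is $\tfrac{2\sqrt\gamma}{9}$, comparable to $\sqrt{\gamma}$, not $\sigma$); but inside such a fat cylinder a crossing must traverse a long thin sub-cylinder, and in fact one can tile the fat cylinder by $\asymp\sqrt{\gamma}^{\,d-1}$ translates of a thin cylinder of aspect ratio $\sigma$, of which the crossing must fully cross at least one. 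Thus from the fat straight run at scale $\gamma^{-k}$ we obtain at least one thin crossing at that scale; doing this at each of the $n$ scales and remembering we may also descend a bounded multiplicative amount within each scale (a straight run at scale $\gamma^{-k_j}$ furnishes thin crossings at all intermediate scales between $\gamma^{-k_j}$ and roughly $\sqrt\gamma\,\gamma^{-k_j}$), we can produce, by a greedy/pigeonhole selection going from the coarsest to the finest scale, a well-separated sub-collection of thin crossed cylinders whose cardinality is at least $c\sqrt\gamma\cdot n$ — the factor $\sqrt\gamma$ coming precisely from the number of thin cylinders packed across the fat radius, together with the scale separation $\gamma^{k_{j+1}-k_j}\ge\gamma$ which guarantees well-separatedness. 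This is the step I expect to be the main obstacle: making the greedy extraction genuinely produce $\Omega(\sqrt\gamma\, n)$ well-separated cylinders, checking the well-separation Definition~\ref{def: well seperated} against the stated geometry (the constant $9/2$ in Definition~\ref{def:straight run} is chosen for exactly this), and controlling the combinatorial overcounting incurred by the choices.

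Once we have that deterministic geometric reduction, the probabilistic part is immediate from \textbf{H2}: the probability that a \emph{fixed} such sub-collection of $m\ge c\sqrt\gamma\, n$ well-separated $\sigma$-cylinders is simultaneously crossed is at most $C\rho^{m}=C(1-\varepsilon)^{m}\le C e^{-\varepsilon m}\le Ce^{-c\sqrt\gamma\,\varepsilon n}$. We then take a union bound over (a) which curve in $\mathcal{F}_\delta$ realizes the runs — but since a straight run is a geometric property of the union $\bigcup\mathcal{F}_\delta$, it is cleaner to union bound directly over (b) the possible \emph{locations and orientations} of the cylinders. Here the dyadic/scale structure helps: at the finest scale $\gamma^{-k_n}\ge\delta$ there are at most $\asymp(\gamma^{k_n}/\text{diam}(\Lambda))^{d}$ choices of center on a $\gamma^{-k_n}$-net of $\Lambda$, times a bounded number of orientations (a net of the sphere at resolution $\asymp\sigma^{-1}$), and once the finest cylinder is pinned the coarser ones in a \emph{nested} sequence are constrained to within $O(1)$ further choices at each of the $n$ levels. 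This yields an enumeration factor bounded by $K_1\gamma^{2dk_n}e^{K_1 n}$ — the $\gamma^{2dk_n}$ absorbing the net cardinality (with room to spare, $d$ vs.\ $2d$) and the $e^{K_1 n}$ absorbing the per-level choices and the $\binom{2n}{n}$-type combinatorics in the indices. Multiplying the enumeration bound by the per-configuration bound $Ce^{-c\sqrt\gamma\,\varepsilon n}$ gives
\[
K_1\gamma^{2dk_n}e^{K_1 n}\cdot C e^{-c\sqrt\gamma\,\varepsilon n}\le K_1\gamma^{2dk_n}e^{K_1 n-K_0\sqrt\gamma\,\varepsilon n},
\]
which is exactly \eqref{eq:straightrunboundprob} after relabeling constants.

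A few points need care. The hypothesis $\gamma>\max(4d,\sigma^2)$ is used twice: the bound $\gamma>\sigma^2$ ensures the thin $\sigma$-cylinders genuinely fit (with the right aspect ratio) inside the fat straight-run cylinder of radius $\tfrac{9}{2\sqrt\gamma}\gamma^{-k}$ and length $\gamma^{-k}$, and $\gamma>4d$ gives enough room for the packing count to beat the dimensional constant, so that $c\sqrt\gamma$ is a genuine gain; "for small enough $\varepsilon$" is needed only so that $\rho=1-\varepsilon\in(0,1)$ and $e^{-\varepsilon m}$ dominates the lower-order terms, and can be folded into the constants. I would organize the write-up as: (1) a geometric lemma — inside a $\gamma$-straight-run cylinder there is a crossing of some $\sigma$-cylinder of the same length, and more generally of every scale between $\gamma^{-k}$ and $\tfrac29\sqrt\gamma\,\gamma^{-k}$; (2) the greedy extraction producing $\ge c\sqrt\gamma\, n$ well-separated $\sigma$-cylinders from the nested sequence; (3) application of \textbf{H2} to a fixed configuration; (4) the union bound over configurations with the scale-respecting enumeration; (5) assembling the constants.
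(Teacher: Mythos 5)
Your overall architecture --- discretize the positions of the nested cylinders onto lattices $\gamma^{-k_j-1}\zz^d$ to get the enumeration factor $K^n\gamma^{2dk_n}$, extract from the nested straight runs a well-separated family of order $\sqrt{\gamma}\,n$ cylinders of aspect ratio $\sigma$, and apply \textbf{H2} to that family --- is the paper's. But the geometric step that produces the $\sqrt{\gamma}$ gain per scale, which you yourself flag as the main obstacle, is carried out incorrectly, and as described it does not yield the claimed $c\sqrt{\gamma}\,n$ crossed cylinders. You read the straight-run cylinder (length $L$, radius $\tfrac{9}{2\sqrt{\gamma}}L$) as ``too fat'' and propose to tile it \emph{transversally} by $\asymp\sqrt{\gamma}^{\,d-1}$ thin cylinders, of which the crossing ``must fully cross at least one.'' That gives only \emph{one} guaranteed crossed $\sigma$-cylinder per scale, hence only $n$ in total, and the resulting bound $C(1-\varepsilon)^{cn}$ carries no $\sqrt{\gamma}$ in the exponent --- which is fatal downstream, since Proposition \ref{prop:sparcity H2} needs $K_0\sqrt{\gamma}\varepsilon$ to beat $4d\log\gamma+K_1$. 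Your attempt to recover the factor $\sqrt{\gamma}$ from ``the number of thin cylinders packed across the fat radius'' is a non sequitur: the crossing is only known to traverse one of them. Nor do the ``intermediate scales between $\gamma^{-k_j}$ and $\sqrt{\gamma}\,\gamma^{-k_j}$'' help, as there are only $O(\log\gamma)$ of those.

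The correct observation is the opposite one: with aspect ratio defined as length over radius, the straight-run cylinder (after the lattice discretization, length $L/2$ and radius $\tfrac{5}{\sqrt{\gamma}}L$) is too \emph{long}, with aspect ratio $\asymp\sqrt{\gamma}\gg\sigma$, and one slices it \emph{longitudinally} into $\sqrt{\gamma}/10\sigma$ sub-cylinders of aspect ratio $\sigma$ whose bases are translates of the original bases. A base-to-base crossing then crosses \emph{every} slice, giving $\asymp\sqrt{\gamma}/\sigma$ crossed $\sigma$-cylinders at each of the $n$ scales. Well-separation is obtained by keeping every second slice (adjacent slices are at distance $0$ from each other) and discarding, at each scale, the at most two slices that meet the next, finer, nested cylinder; this leaves at least $n(\sqrt{\gamma}/20\sigma-2)$ well-separated cylinders, and \textbf{H2} then gives the exponent $-K_0\sqrt{\gamma}\varepsilon n$. (A secondary point: your enumeration pins the finest cylinder first and claims $O(1)$ further choices per coarser level with constants independent of $\gamma$; the clean count goes coarsest-to-finest, each level costing a factor $K_d\gamma^{2d(k_j-k_{j-1})}$, which telescopes to $K_d^n\gamma^{2dk_n}$ exactly as in the stated bound.)
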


\begin{proof}
First note that if a curve in $\rr^{d}$ crosses a cylinder of length
$L$ and radius $\frac{9}{2\sqrt{\gamma}}L$, then it also crosses
a cylinder of radius $\frac{5}{\sqrt{\gamma}}L$ and length $\frac{L}{2}$,
which is centered on a line between two points in the discrete lattice
$\frac{L}{\gamma}\zz^{d}$. Therefore, we can instead bound the probability
of there existing a sequence of straight runs whose cylinders are
centered on a segment in $\gamma^{-k_{1}-1}\zz^{d},\ldots,\gamma^{-k_{n}-1}\zz^{d}$. 

The number of possible positions for the cylinder at scale $\gamma^{-k_{1}}$
will be at most $K_{\Lambda}\gamma^{2dk_{1}}$, where $K_{\Lambda}\approx\vol(\Lambda)$
is a constant only depending on $\Lambda$. Then the number of possible
positions for the second cylinder at scale $\gamma^{-k_{2}}$ will
be at most $K_{d}\gamma^{2d(k_{2}-k_{1})}$ for a constant $K_{d}$
depending only on the dimension. Indeed, to pick the second cylinder
given the first cylinder we need to pick two points in the lattice
$\gamma^{-k_{2}-1}\zz^{d}$ contained in a subset of $\rr^{d}$ with
volume $K_{d}\gamma^{dk_{1}}$ for appropriate constants depending
solely on the dimension. Repeating this until $k_{n}$, we get that
the number of possible cylinders at scales $\gamma^{-k_{1}},\gamma^{-k_{2}},...,\gamma^{-k_{n}}$
is bounded above by:
\begin{equation}
K_{\Lambda}\gamma^{2dk_{1}}K_{d}\gamma^{2d(k_{2}-k_{1})}\cdots K_{d}\gamma^{2d(k_{n}-k_{n-1})}=K_{d}^{n}\gamma^{2dk_{n}}.\label{eq:numberofcylbound}
\end{equation}

For a fixed collection of cylinders $A_{1},..,A_{n}$, with radius
and length as above, we want to bound the probability of all of them
being crossed at once. This will be done using the assumption regarding
hypothesis \textbf{H2. }To match the aspect ratio $\sigma$ in \textbf{H2},
cut each cylinder $A_{i}$ into $\sqrt{\gamma}/10\sigma$ smaller
cylinders of aspect ratio $\sigma$ (whose spherical bases are translates
of the bases of $A_{i}$). If the length of the original cylinder
$A_{i}$ is $L/2$, as described above its radius will be $\frac{5}{\sqrt{\gamma}}L$.
Therefore the length of each cylinder obtained from the cutting is
$\frac{5L}{\sqrt{\gamma}/\sigma}$ and so each has diameter (diameter
as defined in \ref{def: well seperated}, not diameter of the base
of the cylinder) of $L\sqrt{\frac{10^{2}}{\gamma}+\frac{(5\sigma)^{2}}{\gamma}}=5L\sqrt{\frac{4+\sigma^{2}}{\gamma}}$.
To obtain a collection of well seperated cylinders from the ones cut
from $A_{i}$, we pick every second cylinder (since $\sigma\geq1$),
which yields at least $\sqrt{\gamma}/20\sigma$ cylinders. 

Finally, note that each section of $A_{i+1}$ intersects at most two
of the smaller cylinders $A_{i}$ was cut into, so by removing those
two cylinders from each layer $A_{i}$ we get a well seperated collection
with at least $n(\sqrt{\gamma}/20\sigma-2)$. Therefore, using hypothesis
\textbf{H2}:
\begin{equation}
\pp_{\delta}(\textup{All the cylinders \ensuremath{A_{1},..,A_{n}} are crossed)\ensuremath{\leq C(1-\varepsilon)^{n(\sqrt{\gamma}/20\sigma-2)}\leq Ce^{C\sqrt{\gamma}\log(1-\varepsilon)n}\leq Ce^{-K_{0}\sqrt{\gamma}\varepsilon n}}.}\label{eq:crossing bound for fixed cylinders}
\end{equation}
Finally, combining (\ref{eq:numberofcylbound}) and (\ref{eq:crossing bound for fixed cylinders})
and applying a union bound, we get our desired result.
\end{proof}
\begin{rem}
\label{rem:sigma fail}This is the only place where the assumption
$\sigma\geq1$ was needed. Were we to replace this assumption with
$\sigma\geq\sigma_{0}$ instead, we would have had to pick instead
every $\left\lceil \frac{4+\sigma_{0}^{2}}{\sigma_{0}}\right\rceil $-th
layer instead. This wouldv'e yielded a similar result, but with the
constants depending on $\sigma_{0}$.
\end{rem}

\begin{prop}
\label{prop:sparcity H2} Let $\mathcal{F}=(\mathcal{F}_{\delta})_{0<\delta\leq1}$
be a system of random curves with short variable cutoff in $\Lambda\subseteq\rr^{d}$
satisfying hypothesis \textbf{H2 }with parameters $\rho,\sigma$.
Let $\gamma>\max(4d,\sigma^{2})$ be a scaling factor satisfying the
inequality $\gamma^{4d}e^{K_{1}-K_{0}\varepsilon\sqrt{\gamma}}<\frac{1}{8}$,
with $K_{0},K_{1}$ being the constants from the above lemma. Then
there exist constants $C,c>0$, depending only on $\sigma$ and $\Lambda$,
such that for all integer $k_{0}$,
\[
\pp_{\delta}(\textup{straight runs are }(\gamma,k_{0})\textup{-sparse down to scale \ensuremath{\delta} for all curves in \ensuremath{\mathcal{F}_{\delta}})}\geq1-Ce^{-ck_{0}}.
\]
\end{prop}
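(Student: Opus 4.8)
The plan is to bound the complementary event by a union bound over admissible scale sequences, with all the quantitative input supplied by Lemma~\ref{lem:prob bound on existence of straigh runs}. Unwinding Definition~\ref{def:k, gamma sparse}, the event that straight runs fail to be $(\gamma,k_0)$-sparse down to scale $\delta$ for every curve in $\eff_\delta$ is precisely the union, over all integers $n\ge\half k_0$ and all increasing sequences of positive integers $1\le k_1<k_2<\cdots<k_n\le 2n$ with $\gamma^{-k_n}\ge\delta$, of the events $\{$some curve in $\eff_\delta$ contains a nested sequence of $\gamma$-straight runs at scales $\gamma^{-k_1},\ldots,\gamma^{-k_n}\}$, whose probabilities are controlled by (\ref{eq:straightrunboundprob}). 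So it suffices to sum that estimate over all such $n$ and all such sequences.

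First I would fix $n$ and handle all sequences with this value of $n$ at once. There are at most $\binom{2n}{n}\le 4^{n}$ increasing sequences $1\le k_1<\cdots<k_n\le 2n$, and each of them has $k_n\le 2n$; hence Lemma~\ref{lem:prob bound on existence of straigh runs} bounds the probability in (\ref{eq:straightrunboundprob}) for each fixed such sequence by $K_1\gamma^{2dk_n}e^{K_1 n-K_0\sqrt{\gamma}\varepsilon n}\le K_1\left(\gamma^{4d}e^{K_1-K_0\sqrt{\gamma}\varepsilon}\right)^{n}$. A union bound over the (at most) $4^{n}$ sequences then gives
\[
\pp_\delta\left(\exists\ \text{such a nested sequence of straight runs with this }n\right)\ \le\ K_1\left(4\gamma^{4d}e^{K_1-K_0\sqrt{\gamma}\varepsilon}\right)^{n}.
\]

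Next I would invoke the standing hypothesis $\gamma^{4d}e^{K_1-K_0\varepsilon\sqrt{\gamma}}<\frac{1}{8}$: it forces $q:=4\gamma^{4d}e^{K_1-K_0\sqrt{\gamma}\varepsilon}<\half$, so the bound above is summable in $n$. Summing over all integers $n\ge\half k_0$,
\[
\pp_\delta\left(\text{straight runs not }(\gamma,k_0)\text{-sparse down to }\delta\text{ for some curve in }\eff_\delta\right)\ \le\ \sum_{n\ge k_0/2}K_1 q^{n}\ \le\ \frac{K_1}{1-q}\,q^{k_0/2}\ \le\ Ce^{-ck_0},
\]
with $c=\half\log(1/q)\ge\half\log 2>0$ and $C=K_1/(1-q)\le 2K_1$, both depending only on $\sigma$ and $\Lambda$ (through $K_0,K_1$ and the admissible range of $\gamma$); passing to complements gives the claim, which is vacuous when $1-Ce^{-ck_0}\le 0$. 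The only steps needing genuine care are reading off from Definition~\ref{def:k, gamma sparse} exactly which sequences enter the union --- in particular the constraint $k_n\le 2n$, which is what simultaneously keeps the per-$n$ sequence count at $4^{n}$ and the factor $\gamma^{2dk_n}$ at $\gamma^{4dn}$ --- and checking that the hypothesis on $\gamma$ leaves enough room ($q<\half$) for the geometric series in $n$ to converge and yield exponential decay in $k_0$. I expect no deeper obstacle, since all the probabilistic content is already packaged in Lemma~\ref{lem:prob bound on existence of straigh runs}.
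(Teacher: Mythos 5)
Your argument is correct and follows essentially the same route as the paper's proof: a union bound over $n\ge k_0/2$ and over the at most $\binom{2n}{n}\le 4^n$ admissible sequences, using $k_n\le 2n$ to absorb $\gamma^{2dk_n}$ into $\gamma^{4dn}$, and then the hypothesis $\gamma^{4d}e^{K_1-K_0\varepsilon\sqrt{\gamma}}<\frac{1}{8}$ to make the resulting geometric series decay like $2^{-k_0/2}$. No gaps.
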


\begin{proof}
Combining Lemma \ref{lem:prob bound on existence of straigh runs}
and the definition of sparsity of straight runs, we see that:

\begin{multline}
\pp_{\delta}(\textup{straight runs are not }(\gamma,k_{0})\textup{-sparse down to scale \ensuremath{\delta})}\\
\leq\sum_{n=\lceil k_{0}/2\rceil}^{\infty}\sum_{1\le k_{1}<\cdots<k_{n}\le2n,\,\gamma^{-k_{n}}\ge\delta}\pp_{\delta}\left(\textup{\ensuremath{\begin{array}{c}
\textup{there is a nested sequence of straight runs}\\
\textup{at scales}\gamma^{-k_{1}},\gamma^{-k_{2}},...,\gamma^{-k_{n}}
\end{array}}}\right)\\
\leq\sum_{n=\lceil k_{0}/2\rceil}^{\infty}\sum_{1\le k_{1}<\cdots<k_{n}\le2n}K_{1}\gamma^{4dn}e^{K_{1}n-K_{0}\sqrt{\gamma}\varepsilon n}\\
\le K\sum_{n=\lceil k_{0}/2\rceil}^{\infty}{2n \choose n}\gamma^{4dn}e^{K_{1}n-K_{0}\sqrt{\gamma}\varepsilon n}\\
\leq K\sum_{n=\lceil k_{0}/2\rceil}^{\infty}{2n \choose n}\left(\frac{1}{8}\right)^{n}\leq K\sum_{n=\lceil k_{0}/2\rceil}^{\infty}4^{n}\left(\frac{1}{8}\right)^{n}\leq Ce^{-ck_{0}},\label{eq:really long ineqaulity}
\end{multline}
which is our desired result.
\end{proof}
We summarize the results of this section in the following proposition:
\begin{prop}
\label{prop:capestimate aizenmann} In a system satisfying the conditions
of Theorem \ref{thm:exact roughness},there exists a random variable
$k_{0;\delta}$ such that for any single curve $\mathcal{C}\in\mathcal{F}_{\delta}$:
\begin{equation}
\capi_{s;\delta}(\mathcal{C})\geq\left(\left(\frac{\gamma}{m}-1\right)\cdot\diam(\mathcal{C})\right)^{s}\left(\gamma^{s\cdot k_{0;\delta}}+\frac{\beta}{1-\gamma^{s}/\beta}\right)^{-1},\label{eq:capestimate}
\end{equation}
where $k_{0;\delta}=\min(k_{0}\geq0\textup{ : straight runs are \ensuremath{k_{0;\delta}} sparse down to scale \ensuremath{\delta})}$
is a random variable which satisfies:
\begin{equation}
\pp_{\delta}(k_{0;\delta}>N)\leq Ce^{-cN},N\in\nn,\label{eq:k0 prob estimate}
\end{equation}
$\gamma>\max(4d,\sigma^{2})$ is a scaling factor such that $\gamma^{4d}e^{K_{1}-K_{0}\varepsilon\sqrt{\gamma}}<\frac{1}{8}$,
$m$ is an integer in $[\gamma/2,\gamma]$, $\beta=\sqrt{m(m+1)}$,
and $\gamma^{s}<\beta$.
\end{prop}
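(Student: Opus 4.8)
The plan is to obtain Proposition~\ref{prop:capestimate aizenmann} simply by combining Lemma~\ref{lem:actually useful thing} with Proposition~\ref{prop:sparcity H2}; this is a bookkeeping step that repackages the two substantive results of the section into the form in which they will be used later. I would fix a scaling factor $\gamma$ as in the statement (such a $\gamma$ exists because, for $\varepsilon$ fixed, $\gamma^{4d}e^{K_{1}-K_{0}\varepsilon\sqrt{\gamma}}\to 0$ as $\gamma\to\infty$, while the requirement $\gamma>\max(4d,\sigma^{2})$ is compatible with taking $\gamma$ large) and define
\[
k_{0;\delta}:=\min\bigl\{k_{0}\geq0\ :\ \text{straight runs are }(\gamma,k_{0})\text{-sparse down to scale }\delta\text{ in every curve of }\mathcal{F}_{\delta}\bigr\}.
\]
First I would verify that this is well defined and almost surely finite. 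By Definition~\ref{def:k, gamma sparse} the property of being $(\gamma,k_{0})$-sparse down to scale $\delta$ only weakens as $k_{0}$ grows (a larger $k_{0}$ shrinks the range $n\geq\frac{1}{2}k_{0}$ of nested sequences one must exclude), so the admissible set of $k_{0}$ is an up-set; and since $\mathcal{F}_{\delta}$ is almost surely a finite collection of $\delta$-polygonal paths in the bounded set $\Lambda$, only the finitely many scales $\gamma^{-k}\geq\delta$ are relevant, and the condition holds vacuously once $\frac{1}{2}k_{0}$ exceeds $\log_{\gamma}(1/\delta)$ (as then $n\leq k_{n}\leq\log_{\gamma}(1/\delta)<\frac{1}{2}k_{0}\leq n$ is impossible). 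Hence the minimum is attained, and by monotonicity $\{k_{0;\delta}\leq N\}$ is precisely the event that straight runs are $(\gamma,N)$-sparse down to scale $\delta$ in every curve of $\mathcal{F}_{\delta}$; measurability of this event, and hence of $k_{0;\delta}$, reduces — as in the setup of hypothesis~\textbf{H2} and the proof of Lemma~\ref{lem:prob bound on existence of straigh runs} — to measurability of finitely many events of the form ``a given cylinder is crossed by a curve of $\mathcal{F}_{\delta}$''.

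Granting this, the tail bound~(\ref{eq:k0 prob estimate}) is immediate: since $\gamma$ was chosen so that $\gamma^{4d}e^{K_{1}-K_{0}\varepsilon\sqrt{\gamma}}<\frac{1}{8}$, Proposition~\ref{prop:sparcity H2} applies and gives
\[
\pp_{\delta}(k_{0;\delta}>N)=\pp_{\delta}\bigl(\text{straight runs are not }(\gamma,N)\text{-sparse down to scale }\delta\bigr)\leq Ce^{-cN}
\]
for every integer $N\geq0$, with $C,c>0$ depending only on $\sigma$ and $\Lambda$.

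Finally, the capacity estimate~(\ref{eq:capestimate}) I would establish deterministically, on each realization. By construction, straight runs in every curve $\mathcal{C}\in\mathcal{F}_{\delta}$ are $(\gamma,k_{0;\delta})$-sparse down to scale $\delta$, so Lemma~\ref{lem:actually useful thing}, applied with $\ell=\delta$, with $k_{0}$ equal to the realized value of $k_{0;\delta}$, and with the $m,\beta,s$ furnished by the statement (for which $m\in[\gamma/2,\gamma]$, $\beta=\sqrt{m(m+1)}$ and $\gamma^{s}<\beta$ hold by assumption), yields exactly~(\ref{eq:capestimate}). I do not anticipate any genuine obstacle here: the only nontrivial inputs, Lemma~\ref{lem:actually useful thing} (from Aizenman--Burchard) and Proposition~\ref{prop:sparcity H2}, are already in hand, and the remaining work is confined to checking that $k_{0;\delta}$ is a bona fide, almost surely finite random variable and that monotonicity of sparsity identifies its upper tail with the failure event estimated in Proposition~\ref{prop:sparcity H2}.
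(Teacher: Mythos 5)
Your proposal is correct and is essentially the paper's (implicit) argument: the proposition is stated as a summary obtained by defining $k_{0;\delta}$ as the minimal sparsity parameter, feeding the realized value into Lemma~\ref{lem:actually useful thing} with $\ell=\delta$ for the deterministic capacity bound, and identifying $\{k_{0;\delta}>N\}$ with the failure event of Proposition~\ref{prop:sparcity H2} for the tail estimate. Your added checks that the admissible set of $k_{0}$ is an up-set, that the condition is vacuous once $\tfrac{1}{2}k_{0}>\log_{\gamma}(1/\delta)$, and that the event is measurable are correct and fill in details the paper leaves unstated.
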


Before moving on, note that we can simplify the required inequality
on $\gamma$ and rewrite it instead as:
\begin{equation}
\gamma>K\varepsilon^{-2}\log^{2}\left(\varepsilon\right)=:\gamma_{0}\label{eq:gamma inequality}
\end{equation}
where $K$ is a constant independent of $\rho$ (but depending on
the dimension and $\sigma$), and recalling that $\rho=1-\varepsilon$,
where $0<\varepsilon<\frac{1}{10}$.

\subsection{Completing the Argument}

\textbf{Proof of Theorem \ref{thm:exact roughness}: }Pick a value
$\gamma\in(\gamma_{0},2\gamma_{0})$ as in (\ref{eq:gamma inequality}),
with the additional property that $\gamma-\frac{1}{4}$ is an integer.
Denote $m=\left\lfloor \gamma\right\rfloor =\gamma-\frac{1}{4}$,
$\beta=\sqrt{m(m+1)}$ as before and take $s$ for which
\begin{equation}
\gamma^{s}=m\left(1+\frac{1}{m}\right)^{3/8}\leq m\left(1+\frac{1}{m}\right)^{1/2}=\beta.\label{eq:gamma bound}
\end{equation}
Then by Proposition \ref{prop:capestimate aizenmann}, for any curve
$\mathcal{C}$ in $\mathcal{F_{\delta}}$ such that $\diam(\mathcal{C})\geq1$:
\begin{equation}
\capi_{s;\delta}(\mathcal{C})\geq(\frac{\gamma}{m}-1)^{s}\left(\gamma^{s\cdot k_{0;\delta}}+\frac{\beta}{1-\gamma^{s}/\beta}\right)^{-1}=\frac{1}{4m}\left(\gamma^{s\cdot k_{0;\delta}}+\frac{\beta}{1-\gamma^{s}/\beta}\right)^{-1}.\label{eq:bettercapestimate}
\end{equation}
Additionally, the value of $s$ we chose also has the property that:
\begin{equation}
\frac{\gamma^{s}}{\beta}=\left(1+\frac{1}{m}\right)^{-1/8}=\left(\frac{m}{\beta}\right)^{1/4},\label{eq:gamma equality}
\end{equation}
so by (\ref{eq:capestimate}) and (\ref{eq:k0 prob estimate}) we
get for this value of $s$ and any $\mu>0$ small enough:
\begin{multline}
\pp(T_{s,1;\delta}<\mu)\leq\pp\left(\frac{1}{4m}\left(\gamma^{s\cdot k_{0;\delta}}+\frac{\beta}{1-\gamma^{s}/\beta}\right)^{-1}<\mu\right)=\\
=\pp\left(\gamma^{s\cdot k_{0;\delta}}+\frac{\beta}{1-\gamma^{s}/\beta}>\frac{1}{4\mu m}\right)\leq\pp\left(\gamma^{s\cdot k_{0;\delta}}+\frac{\beta}{1-(m/\beta)^{1/4}}>\frac{1}{4\mu\beta}\right)=\\
=\pp\left(\gamma^{s\cdot k_{0;\delta}}+\frac{\beta^{5/4}}{\beta^{1/4}-m^{1/4}}>\frac{1}{4\beta\mu}\right)\leq\pp\left(\gamma^{s\cdot k_{0;\delta}}+4\beta^{5/4}>\frac{1}{4\beta\mu}\right)=\\
=\pp\left(k_{0;\delta}>\frac{\log\left(\frac{1}{4\mu\beta}-4\beta^{5/4}\right)}{s\log(\gamma)}\right)\leq\exp\left(-c\frac{\log\left(\frac{1}{4\beta\mu}-4\beta^{5/4}\right)}{s\log(\gamma)}\right)\leq K(\beta\mu)^{\frac{K}{s\log(\gamma)}}\leq K\mu^{\frac{K}{s\log(\gamma)}},\label{eq:T prob bound}
\end{multline}
and in addition, for $\mu<1$: 
\begin{equation}
K\mu^{\frac{K}{s\log(\gamma)}}=K\mu^{\frac{K}{\log(\gamma^{s})}}\leq K\mu^{\frac{K}{\log(\beta)}}\leq K\mu^{\frac{K}{\log(\gamma)}}\leq K\mu^{\frac{K}{\log(1/\varepsilon)}},\label{eq:mu bound}
\end{equation}
proving (\ref{eq:gbound1}). Finally, in order to get (\ref{eq:alphabound1})
we see that:
\begin{multline}
s=\frac{\log\left(m\left(1+\frac{1}{m}\right)^{3/8}\right)}{\log(\gamma)}=\frac{\log\left(m\left(1+\frac{1}{m}\right)^{3/8}\right)}{\log(m+1/4)}=\\
\frac{\log\left(m\left(1+\frac{1}{m}\right)^{3/8}\right)}{\log\left(m(1+\frac{1}{4m})\right)}\geq\frac{\log\left(m(1+\frac{1}{m})^{3/8}\right)}{\log\left(m(1+\frac{1}{m})^{1/4}\right)}\geq1+\frac{1}{8}\frac{\log\left(1+\frac{1}{m}\right)}{\log\left(m(1+\frac{1}{m})^{1/4}\right)}\geq\\
\geq1+\frac{1}{16m\log(m)}\geq1+\frac{c}{\gamma\log(\gamma)}\geq1+c\frac{\varepsilon^{2}}{\ln(1/\varepsilon)^{3}}\label{eq:s bound}
\end{multline}
Therefore, for $\alpha\geq c\frac{\varepsilon^{2}}{\log(1/\varepsilon)^{3}}$
. In the same way, we also get an upper bound on $\alpha$.\qed

\section{\label{sec:Upper-Bound-in}Upper Bound in the Random-Field Ising
Model}

\subsection{Overview of the proof}

Our goal in this section will be to prove the first part of Theorem
\ref{thm:clength upper-1}. Namely:
\begin{thm}
\label{thm:clength upper} In the two-dimensional random-field Ising
model, for each fixed temperature $T\ge0$ and external field $\eta\in\rr$,
the correlation length satisfies
\begin{equation}
\zeta_{1}\leq\exp\left(\exp\left(O\left(\frac{J}{\varepsilon}\right)^{2}\right)\right),\label{eq:corr length bound upper}
\end{equation}
as $J/\varepsilon$ tends to infinity. 
\end{thm}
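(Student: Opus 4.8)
The plan is to run the proof of exponential decay of correlations of \cite{ding2019exponentialpostemp,expdecay}, identify the one step at which the qualitative Aizenman--Burchard theorem (Theorem \ref{thm:roughness}) enters, and replace it by our quantitative Theorem \ref{thm:exact roughness}; the novelty over the existing argument is then careful bookkeeping of how every parameter depends on $J/\varepsilon$.

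First I would recall the interface representation on which those proofs rest. For a fixed realization of the field, the $+$ and $-$ states in $\Lambda(L)$ are compared through a monotone coupling -- the ground-state comparison when $T=0$, and the random-cluster/disagreement coupling of \cite{ding2019exponentialpostemp} when $T>0$ -- whose region of disagreement is enclosed by lattice interfaces, and the event that $\sigma_0$ feels the boundary of $\Lambda(L)$ forces one of these interfaces to reach from the origin out to distance $\gtrsim L$. Rescaling $\Lambda(L)$ to a fixed box (of diameter a large constant) turns these interfaces into $\delta$-polygonal paths with $\delta=1/L$, so that $\eff=(\eff_{1/L})_{L\ge1}$ is a system of random curves with short variable cutoff, and
\[
m(L)\ \le\ \pp_{1/L}\!\left(\eff_{1/L}\text{ contains a curve of diameter }\ge 1\text{ reaching the origin}\right).
\]

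The core of the proof is the \emph{quantitative} verification of hypothesis \textbf{H2} for $\eff$. The crossing estimates of \cite{expdecay}, which at bottom apply Gaussian concentration (the Aizenman--Wehr fluctuation bound) to the free-energy cost of forcing an interface across a cylinder, show that a given cylinder of length $\ge\delta$ is crossed lengthwise by an interface with probability at most $1-\varepsilon'$, and that a well-separated family of $m$ such cylinders is jointly crossed with probability at most $C(1-\varepsilon')^m$; thus \textbf{H2} holds with $\rho=1-\varepsilon'$ and some fixed aspect ratio $\sigma$. The key quantitative fact to be extracted from that estimate -- and this is the principal bookkeeping task, and the place where the argument is lossy -- is the lower bound $\varepsilon'\ge \exp(-C(J/\varepsilon)^2)$, i.e.\ $\log(1/\varepsilon')\le C(J/\varepsilon)^2$. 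Feeding $\rho=1-\varepsilon'$ into Theorem \ref{thm:exact roughness} then produces a roughness exponent $\alpha\ge \kappa\,\varepsilon'^2/\log(1/\varepsilon')^3$ together with the tail bound $\pp_{1/L}(T_{1+\alpha,1;1/L}<\mu)\le \mu^{K/\log(1/\varepsilon')}$; by Claim \ref{claim:cap properties}(ii) this means that, outside an event of probability $\le\mu^{K/\log(1/\varepsilon')}$, every macroscopic curve in $\eff_{1/L}$ must be covered by at least $\gtrsim\mu\,L^{1+\alpha}$ boxes of side $1/L$, i.e.\ the underlying lattice interface has length $\gtrsim\mu\,L^{1+\alpha}$.

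To finish I would close the argument as in \cite{expdecay}: the super-linear length bound $\propto L^{1+\alpha}$, valid simultaneously at all scales, upgrades the single-cylinder bound into a \emph{renormalized} crossing probability that stays bounded away from $1$ uniformly in the scale once the scale exceeds $L_\ast:=\exp(C/\alpha)$, after which a standard block/renormalization argument yields honest exponential decay $m(L)\le C e^{-cL/L_\ast}$ for all $L$, hence $\zeta_1\le C'L_\ast$. Since $\log(1/\varepsilon')\le C(J/\varepsilon)^2$ gives
\[
\frac1\alpha\ \le\ \frac{\log(1/\varepsilon')^3}{\kappa\,\varepsilon'^2}\ \le\ \exp\!\big(C'(J/\varepsilon)^2\big),
\]
we get $L_\ast\le\exp\exp\big(O((J/\varepsilon)^2)\big)$ and therefore $\zeta_1\le\exp\exp\big(O((J/\varepsilon)^2)\big)$, which is (\ref{eq:corr length bound upper}). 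The main obstacle is the middle paragraph: making the verification of \textbf{H2} genuinely quantitative by tracking through the fluctuation estimates of \cite{expdecay} the dependence of the deficit $\varepsilon'=1-\rho$ on $J/\varepsilon$ (and choosing $\mu$ so that the small exponent $K/\log(1/\varepsilon')$ remains usable), together with checking that the passage from ``roughness at every scale'' to an \emph{exponential} -- as opposed to merely stretched-exponential or power-law -- decay rate survives with the explicit threshold scale $L_\ast=\exp(C/\alpha)$.
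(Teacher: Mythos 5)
Your proposal follows essentially the same route as the paper: quantify hypothesis \textbf{H2} for the disagreement percolation via the crossing estimate of \cite{expdecay} with $\rho=1-c\exp(-C(J/\varepsilon)^{2})$, feed this into the quantitative Aizenman--Burchard refinement (Theorem \ref{thm:exact roughness}) to get $\alpha=\exp(-O((J/\varepsilon)^{2}))$ and the capacity tail bound, use the covering/capacity comparison to control the probability of a short annulus crossing, and then invoke the renormalization machinery of \cite{expdecay} as a black box. The only cosmetic difference is in how that black box is quoted -- the paper uses the explicit formula $\zeta_{1}\le C\max(2,J/\varepsilon,1/\alpha,\ell_{1})^{C/\alpha^{2}}$ from \cite[(6.48)]{expdecay} rather than your $\zeta_{1}\le C'\exp(C/\alpha)$, but both yield the same $\exp(\exp(O((J/\varepsilon)^{2})))$ bound.
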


Throughout this section, we fix the temperature $T\ge0$ and external
field $\eta\in\rr$ and omit them from the notation.

The Aizenman--Harel--Peled \cite{expdecay} argument relies on analyzing
a disagreement percolation; see Section 3 there. At zero temperature,
the disagreement percolation is a random set of vertices obtained
as follows: one samples two independent instances of the Ising model
in $\Lambda(L)$ with the same realization of the random field, one
instance with $+$ boundary conditions and the other with $\lyxmathsym{\textendash}$
boundary conditions, and sets the disagreement vertices to be those
vertices where the two instances differ (at such vertices the instance
with $+$ boundary conditions lies strictly above the instance with
$-$ boundary conditions, as the temperature is zero). At positive
temperature, a more complicated construction is used and the resulting
disagreement percolation is a random set consisting of both vertices
and edges. 

The main result proved for the disagreement percolation is stated
in the following lemma taken from \cite[Lemma 5.1]{expdecay}, which
makes use of the Aizenman--Burchard theorem. The zero-temperature
version of the lemma was first proved by \cite{ding2019exponentialzerotemp}.
In the following lemma we shall use the following notation: the operator
$\left\langle \cdot\right\rangle ^{\partial\mathcal{A}_{1,2}(\ell),+\backslash-}$
denotes expectation over the disagreement percolation for a fixed
realization of the random field, and the operator $\ee$ denotes expectation
over the random field itself.
\begin{lem}
\label{lem:5.1 ron} Set $\mathcal{A}_{1,2}(\ell):=\Lambda(2\ell)\setminus\Lambda(\ell)$
to be the annulus of side length $2\ell$, and let $A_{\alpha,\ell}$
denote the event that the annulus $\mathcal{A}_{1,2}(\ell)$ is crossed
by a path of disagreement percolation of length at most $\ell^{1+\alpha}$
(the path alternates between edges and vertices, or consists only
of vertices if $T=0$, and connects the inner boundary to the outer
boundary). Then there exists an $\alpha_{0}>0$ for which:
\begin{equation}
\lim_{\ell\rightarrow\infty}\ee\left[\left\langle \oo_{A_{\alpha_{0},\ell}}\right\rangle ^{\partial\mathcal{A}_{1,2}(\ell),+\backslash-}\right]\rightarrow0.\label{eq:1}
\end{equation}
\end{lem}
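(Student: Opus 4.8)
The plan is to realise the disagreement percolation, rescaled to unit size, as a system of random curves with short variable cutoff obeying hypothesis \textbf{H2}, and then to feed this into the quantitative Aizenman--Burchard bound of Theorem \ref{thm:exact roughness}: the tortuosity it produces forces every path crossing the annulus to use many steps, so a crossing of length at most $\ell^{1+\alpha_{0}}$ becomes unlikely as $\ell\to\infty$. Concretely, fix $\ell\ge1$ and dilate the annulus $\mathcal{A}_{1,2}(\ell)$ by the factor $1/\ell$, so that it lies in a fixed compact $\Lambda\subset\rr^{2}$, the lattice spacing becomes $\delta:=1/\ell\in(0,1]$, and any path joining the inner boundary to the outer boundary has Euclidean diameter at least $1$. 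Let $\mathcal{F}_{\delta}$ be the random finite collection consisting of the disagreement paths and all of their connected subpaths, each written as a $\delta$-polygonal path (an alternating vertex/edge disagreement path is cut into its unit segments; for $T=0$ it is already a lattice path). Only the scale $\delta=1/\ell$ will be used, so the remaining members of the family $(\mathcal{F}_{\delta'})_{0<\delta'\le1}$ may be filled in arbitrarily.

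The crucial input --- and this is the combinatorial heart of the exponential-decay proofs \cite{chatterjee2018decay,aizenman2019power,ding2019exponentialzerotemp,expdecay} --- is that there exist $\rho=\rho(J,\varepsilon)\in(0,1)$ and $\sigma\ge1$, \emph{independent of $\ell$}, such that for every collection of well-separated cylinders $A_{1},\dots,A_{m}\subset\Lambda$ of aspect ratio $\sigma$ and length $>\delta$,
\[
\pp_{\delta}\Bigl(\text{each }A_{i}\text{ is crossed in its long direction by a disagreement path}\Bigr)\le C\rho^{m}.
\]
By the domain-Markov property and the monotonicity of the disagreement set, a peeling/conditioning argument over the cylinders reduces this to the single-cylinder statement that a fixed well-separated box is crossed by the disagreement percolation with probability bounded away from $1$, which is precisely the RSW/contour estimate of the cited works (and where the ratio $J/\varepsilon$ enters). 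Enlarging $\rho$ if necessary we may assume $\rho\in(9/10,1)$, i.e.\ $\varepsilon_{\rho}:=1-\rho<\tfrac{1}{10}$, so that $\mathcal{F}$ satisfies hypothesis \textbf{H2} in the sense of Definition \ref{def:h2} with parameters $(\rho,\sigma)$ and Theorem \ref{thm:exact roughness} applies (for the present qualitative statement only the existence of $\rho<1$ matters; the dependence $\alpha\gtrsim\varepsilon_{\rho}^{2}/\log(1/\varepsilon_{\rho})^{3}$ is what is exploited in the next section).

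Theorem \ref{thm:exact roughness} then yields $\alpha>0$ and $K>0$, independent of $\ell$, with $\pp_{\delta}(T_{1+\alpha,1;\delta}<\mu)\le\mu^{K/\log(1/\varepsilon_{\rho})}$ for all small $\mu>0$, where $T_{1+\alpha,1;\delta}=\inf\{\capi_{1+\alpha;\delta}(\mathcal{C}):\mathcal{C}\in\mathcal{F}_{\delta},\ \diam(\mathcal{C})\ge1\}$. Fix $\alpha_{0}:=\alpha/2$. On the event $A_{\alpha_{0},\ell}$ some disagreement path crosses $\mathcal{A}_{1,2}(\ell)$ in $N\le\ell^{1+\alpha_{0}}$ steps; after rescaling it is a curve $\mathcal{C}\in\mathcal{F}_{\delta}$ with $\diam(\mathcal{C})\ge1$ made of $N$ segments of length $\delta$, and covering $\mathcal{C}$ by these $N$ segments (each of diameter $\delta$) Claim \ref{claim:cap properties}(i) gives
\[
\capi_{1+\alpha;\delta}(\mathcal{C})\le N\,\delta^{1+\alpha}=N\,\ell^{-(1+\alpha)}\le\ell^{\,\alpha_{0}-\alpha}<\mu_{\ell}:=2\ell^{\,\alpha_{0}-\alpha}.
\]
Hence $A_{\alpha_{0},\ell}\subseteq\{T_{1+\alpha,1;\delta}<\mu_{\ell}\}$, and since $\ee[\langle\,\cdot\,\rangle^{\partial\mathcal{A}_{1,2}(\ell),+\backslash-}]$ is exactly the probability $\pp_{\delta}$ with respect to the random field and the two independent configurations,
\[
\ee\bigl[\langle\oo_{A_{\alpha_{0},\ell}}\rangle^{\partial\mathcal{A}_{1,2}(\ell),+\backslash-}\bigr]=\pp_{\delta}(A_{\alpha_{0},\ell})\le\mu_{\ell}^{\,K/\log(1/\varepsilon_{\rho})}=\bigl(2\ell^{-\alpha/2}\bigr)^{K/\log(1/\varepsilon_{\rho})}\xrightarrow{\ell\to\infty}0,
\]
which is (\ref{eq:1}).

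The one genuinely hard step is the verification of hypothesis \textbf{H2} with a fixed $\rho<1$ --- equivalently, the single-box non-crossing estimate for the disagreement percolation --- which is the model-specific analytic core imported from \cite{chatterjee2018decay,aizenman2019power,ding2019exponentialzerotemp,expdecay}; everything downstream is the soft tortuosity argument above. Points still needing routine care: the faithful presentation of alternating vertex/edge disagreement paths as $\delta$-polygonal paths, the bookkeeping turning a single-scale estimate at $\delta=1/\ell$ into a bona fide system of random curves indexed by all $\delta\in(0,1]$, and checking that a rescaled inner-to-outer crossing really has diameter $\ge1$ and so enters the infimum defining $T_{1+\alpha,1;\delta}$.
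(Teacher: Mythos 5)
Your proposal is correct and follows essentially the same route the paper itself takes: the paper quotes this lemma from \cite{expdecay}, and its own quantitative treatment in Section \ref{sec:Upper-Bound-in} is exactly your argument --- rescale the disagreement percolation into a system of random curves with short variable cutoff, verify hypothesis \textbf{H2} with $\ell$-independent parameters via the crossing estimate imported from \cite{expdecay}, and combine Theorem \ref{thm:exact roughness} with Claim \ref{claim:cap properties} to show that a crossing using only $\ell^{1+\alpha_{0}}$ steps would force the capacity below the threshold that occurs with vanishing probability. The only place you are more casual than the paper is the verification of \textbf{H2}: what \cite{expdecay} actually supplies (Lemma \ref{lem:ron 5.2}) is a crossing bound for $1{:}5$ rectangles of short side at least $10$ under a weaker $\ell_{1}$-separation condition, and turning that into \textbf{H2} for well-separated cylinders of fixed aspect ratio requires the small geometric conversion carried out in Section \ref{sec:Upper-Bound-in} (shrinking each rectangle, checking the separation arithmetic, and discarding the bounded number of too-large rectangles), rather than a domain-Markov peeling, which the disagreement percolation does not obviously possess.
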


As in \cite[Theorem 5.5]{expdecay}, we define $\alpha$ to be $\frac{\alpha_{0}}{2}$,
where $\alpha_{0}$ is any value for which (\ref{eq:1}) holds. In
addition to that, we define $\ell_{1}=\ell_{0}^{2}$ where $\ell_{0}>0$
is the minimal value of $\ell$ for which the expression inside the
limit (\ref{eq:1}) is less then some universal constant $\kappa<1$
independent of $J,\varepsilon$ \cite[Theorem 5.5, see (5.47)]{expdecay}.
Then, as stated in \cite[(6.48)]{expdecay}, an upper bound for the
correlation length $\zeta_{1}$ is given by
\begin{equation}
C\max\left(2,\frac{J}{\varepsilon},\frac{1}{\alpha},\ell_{1}\right)^{\frac{C}{\alpha^{2}}},\label{eq:2}
\end{equation}
with $C$ being a universal constant. Our goal then, is to give estimates
for $\ell_{1}$ and $\alpha$ in terms of $J/\varepsilon$. The main
technical results of this section are the following bounds.
\begin{thm}
(Upper bound for correlation length) \label{thm:alpha and l bounds}The
values of $\alpha$ and $\ell_{1}$ satisfy:
\begin{equation}
\alpha=\exp\left[-O\left((\frac{J}{\varepsilon})^{2}\right)\right],\label{eq:3}
\end{equation}
\begin{equation}
\ell_{1}=\exp\left(\exp\left(O\left(\frac{J}{\varepsilon}\right)^{2}\right)\right).\label{eq:4}
\end{equation}
Consequently, due to (\ref{eq:2}), the correlation length $\zeta_{1}$
satisfies the upper bound $\zeta_{1}=\exp\left(\exp\left(O\left((\frac{J}{\varepsilon})^{2}\right)\right)\right)$,
proving Theorem \ref{thm:clength upper}.
\end{thm}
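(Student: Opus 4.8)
The plan is to trace through the Aizenman--Harel--Peled argument with explicit bookkeeping of the $J/\varepsilon$-dependence, using the quantitative Aizenman--Burchard estimate (Theorem \ref{thm:exact roughness}) to pin down the value of the tortuosity exponent $\alpha_0$ in Lemma \ref{lem:5.1 ron}, and then separately estimating the scale $\ell_0$ (hence $\ell_1=\ell_0^2$) at which the disagreement-percolation crossing probability in \eqref{eq:1} drops below the universal constant $\kappa$. Once \eqref{eq:3} and \eqref{eq:4} are in hand, the bound \eqref{eq:corr length bound upper} follows by plugging into \eqref{eq:2}: indeed $\max(2,J/\varepsilon,1/\alpha,\ell_1)=\ell_1=\exp(\exp(O((J/\varepsilon)^2)))$ for large $J/\varepsilon$, and raising this to the power $C/\alpha^2=\exp(O((J/\varepsilon)^2))$ still leaves a bound of the form $\exp(\exp(O((J/\varepsilon)^2)))$, since $\log(\ell_1^{C/\alpha^2})=\frac{C}{\alpha^2}\log\ell_1=\exp(O((J/\varepsilon)^2))\cdot\exp(O((J/\varepsilon)^2))=\exp(O((J/\varepsilon)^2))$.

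First I would identify the relevant system of random curves: the boundaries of disagreement clusters of the $+\backslash-$ disagreement percolation form (after the standard interface construction) a system of random curves with short variable cutoff $\delta=1/\ell$ after rescaling $\Lambda(2\ell)$ to a fixed box. The key input from \cite{expdecay} is that this system satisfies hypothesis \textbf{H2} with a parameter $\rho=1-\varepsilon'$ where $\varepsilon'=c(\varepsilon/J)$ — i.e.\ the per-cylinder crossing suppression is only of order $\varepsilon/J$ in the weak-disorder regime, because the cost of an interface segment relative to the random-field gain scales this way (this is precisely the content of the contour/coarse-graining estimates underlying \cite[Lemma 5.1]{expdecay}). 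Feeding $\rho=1-c\varepsilon/J$ into Theorem \ref{thm:exact roughness} gives a valid tortuosity exponent $\alpha$ with $\alpha\geq\kappa\frac{(\varepsilon/J)^2}{\log(J/\varepsilon)^3}$, which is $\exp(-O((J/\varepsilon)^2))$ up to the negligible polylog factor (absorbed since $\log(J/\varepsilon)^3=\exp(o((J/\varepsilon)^2))$); this, after halving to pass from $\alpha_0$ to $\alpha=\alpha_0/2$, yields \eqref{eq:3}, and the matching upper bound on $\alpha$ from the corresponding direction of \eqref{eq:s bound} shows the estimate is two-sided.

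For \eqref{eq:4} I would use the probabilistic tail \eqref{eq:gbound1}: it bounds $\pp_\delta(T_{1+\alpha,1;\delta}<\mu)\leq\mu^{K/\log(1/\varepsilon')}=\mu^{K/\log(J/\varepsilon)}$. Translating back, the expected crossing quantity $\ee[\langle \oo_{A_{\alpha_0,\ell}}\rangle^{\partial\mathcal{A}_{1,2}(\ell),+\backslash-}]$ in \eqref{eq:1} is controlled by this tail evaluated at a threshold $\mu$ that is polynomial in $1/\ell$ (a short crossing of length $\le\ell^{1+\alpha}$ forces the capacity $\capi_{s;\delta}$ to be small, of order $\ell^{-c\alpha\cdot}$, via Claim \ref{claim:cap properties}(i)); so the quantity in \eqref{eq:1} is at most $\ell^{-c\alpha\cdot K/\log(J/\varepsilon)}$, and requiring this to be below the universal $\kappa$ forces $\log\ell_0=O\!\big(\frac{\log(J/\varepsilon)}{\alpha}\big)=\exp(O((J/\varepsilon)^2))$, whence $\ell_1=\ell_0^2=\exp(\exp(O((J/\varepsilon)^2)))$. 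The main obstacle I anticipate is the careful conversion in this last step — namely verifying that a short crossing really does force the capacity below an explicit polynomial-in-$1/\ell$ threshold with the right exponent, and tracking how the constants from \cite[Section 5, 6]{expdecay} (in particular the passage from Lemma \ref{lem:5.1 ron} to the estimate \eqref{eq:2}) interact with the $\varepsilon$-dependence hidden in $\rho=1-c\varepsilon/J$; everything else is a matter of combining \eqref{eq:alphabound1}, \eqref{eq:gbound1}, and \eqref{eq:2} with routine asymptotic bookkeeping.
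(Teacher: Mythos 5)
Your overall architecture is the same as the paper's: feed the H2 parameter $\rho$ of the disagreement-percolation curve system into Theorem \ref{thm:exact roughness} to get $\alpha$ and the tail exponent, use the capacity bound for short crossings to estimate $\ell_0$ and hence $\ell_1=\ell_0^2$, and plug into \eqref{eq:2}. The final bookkeeping step (raising $\ell_1$ to the power $C/\alpha^2$) is also handled correctly.

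However, there is a genuine gap in the key quantitative input. You assert that the disagreement percolation satisfies \textbf{H2} with $\rho=1-c\,\varepsilon/J$, justified by a heuristic about the energetic cost of an interface segment versus the random-field gain. This is not what the cited results provide. The actual crossing estimate, \cite[Theorem 5.2]{expdecay} (restated as Lemma \ref{lem:ron 5.2}), gives only
\[
\rho=1-c\exp\left(-C\left(\tfrac{J}{\varepsilon}\right)^{2}\right),
\]
i.e.\ the per-rectangle suppression is \emph{exponentially} small in $(J/\varepsilon)^{2}$, not polynomially small in $\varepsilon/J$; this is the source of the double exponential in the final answer. With the correct $\rho$, Theorem \ref{thm:exact roughness} yields $\alpha\geq\kappa\,\exp(-2C(J/\varepsilon)^{2})/(C(J/\varepsilon)^{2})^{3}=\exp(-O((J/\varepsilon)^{2}))$ and a tail exponent $K/\log(1/\varepsilon')\asymp(\varepsilon/J)^{2}$, whence $\log\ell_0=\exp(O((J/\varepsilon)^{2}))$. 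Had your claimed $\rho=1-c\varepsilon/J$ been available, the same pipeline would deliver $\alpha$ polynomial in $\varepsilon/J$ and $\zeta_1=\exp(\mathrm{poly}(J/\varepsilon))$ --- a single-exponential correlation length, far beyond what this method (or the literature you are invoking) can produce; indeed, obtaining bounds of that strength is precisely the content of the later work of Ding--Wirth by different means. So the input parameter must be corrected before the rest of your argument goes through. A secondary, minor point: the curve system used here consists of paths \emph{within} the disagreement set (alternating edges and vertices), not cluster-boundary interfaces, and one still has to upgrade the weak separation condition of Lemma \ref{lem:ron 5.2} to genuine well-separatedness (the paper does this by shrinking each rectangle to a concentric $3a\times15a$ sub-rectangle, giving $\sigma=200$); you do not address this, though it is a technicality rather than an idea.
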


\subsection{Technical arguments regarding the disagreement percolation}

The proof of Lemma \ref{lem:5.1 ron} relies on regarding the disagreement
percolation as a system of random curves to which the Aizenman--Burchard
framework may be applied, and showing that this system satisfies hypothesis
\textbf{H2} (see Section \ref{subsec:Definitions}).

We may consider the disagreement percolation as a system of random
curves with short variable cutoff in the following way: For the disagreement
percolation in the discrete annulus $\mathcal{A}_{1\text{,2}}(\ell)$,
we first dilate this annulus by a factor proportional to $1/\ell$
to make it contained inside $[-8,8]^{2}\setminus(-4,4)^{2}$. The
system of random curves is defined to be the collection of all paths
in the rescaled disagreement percolation, alternating between edges
and vertices, and embedding these paths in $\rr^{2}$ in the natural
way. In addition, we also only consider the intersection of the curves
with the sub annulus $[-7,7]^{2}\backslash(-5,5)^{2}$. Overall this
allows us to define a system of random curves with short variable
cutoff $(\mathcal{F}_{\delta})_{0<\delta\le1}$ in $[-7,7]^{2}\backslash(-5,5)^{2}$,
by sampling for a given $\delta$ the disagreement curves in the measure
$\left\langle \cdot\right\rangle ^{\partial\mathcal{A}_{1\text{,2}}(\ell),+\backslash-}$
for $\ell$ proportional to $1/\delta$ and re-scaling as earlier.
Later, we will denote the obtained measure on this system of random
curves by $\pp_{\delta}$ (this measure is obtained by averaging over
both $\left\langle \cdot\right\rangle ^{\partial\mathcal{A}_{1\text{,2}}(\ell),+\backslash-}$
and the random field).

The following was proven in \cite[Theorem 5.2]{expdecay}
\begin{lem}
\label{lem:ron 5.2} Let $\mathcal{R}$ be a collection of rectangles
contained in the annulus $[-\frac{7}{4}\ell,\frac{7}{4}\ell]\backslash[-\frac{5}{4}\ell,\frac{5}{4}\ell]$
with the following properties:
\begin{enumerate}
\item The side lengths of each $R\in\mathcal{R}$ are $\ell(R)\times5\ell(R)$
with $\ell(R)\in[10,\frac{1}{160}\ell]$.
\item The $\ell_{1}(\rr^{2})$ distance between distinct $R_{1},R_{2}\in\mathcal{R}$
is at least $60\max(\ell(R_{1}),\ell(R_{2}))$.
\end{enumerate}
Let $\mathcal{D}(\mathcal{R})$ be the event that all the rectangles
in $\mathcal{R}$ are crossed by the disagreement percolation (after
embedding the disagreement percolation in $\rr^{2}$ in the natural
way). Then there exist universal constants $c,C>0$ for which:
\begin{equation}
\ee\left[\left\langle \oo_{\mathcal{D}(\mathcal{R})}\right\rangle ^{\partial\mathcal{A}_{1\text{,2}}(\ell),+\backslash-}\right]\leq\left(1-c\exp\left(-C\left(\frac{J}{\varepsilon}\right)^{2}\right)\right)^{\left|\mathcal{R}\right|}.\label{eq:lemma 5 2 expectation bound}
\end{equation}
\end{lem}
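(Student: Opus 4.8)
The plan is to bound the simultaneous-crossing probability by a product over $\mathcal{R}$ of single-rectangle non-crossing probabilities, each at least $c\exp(-C(J/\varepsilon)^{2})$ \emph{uniformly in the size of the rectangle}. The size-uniformity is the crucial point: it is what makes the right-hand side depend on $|\mathcal{R}|$ rather than on the scales $\ell(R)$, and it is where the exponent $(J/\varepsilon)^{2}$ enters.

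The structural reduction is as follows. Enumerate $\mathcal{R}=\{R_{1},\dots,R_{m}\}$ and attach to each $R_{i}$ a square-like neighbourhood $U_{i}$ of diameter comparable to $\ell(R_{i})$, chosen so that the presence of an agreement barrier across the short direction of $R_{i}$ — a monochromatic path of the coupled configuration spanning the two long sides of $R_{i}$, which automatically blocks every long crossing of $R_{i}$ — is measurable with respect to the random field in $U_{i}$ together with the coupled configuration on $\partial U_{i}$. The two hypotheses on $\mathcal{R}$ are used here: the $\ell_{1}$-separation by $60\max(\ell(R_{1}),\ell(R_{2}))$ makes the $U_{i}$ pairwise disjoint, while $\ell(R)\le\ell/160$ keeps every $U_{i}$ well inside $\Lambda(L)$, so that the conditioning below is legitimate. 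One then reveals the field block by block, in the order $U_{1},U_{2},\dots$: because the $U_{i}$ are disjoint, the field in $U_{i}$ is fresh at step $i$, so — conditionally on the past and on the configurations already exposed — the probability that a barrier is created inside $U_{i}$, hence that $R_{i}$ is not crossed in the long direction, is at least $c\exp(-C(J/\varepsilon)^{2})$. Iterating this conditional estimate gives
\[
\ee\left[\left\langle \oo_{\mathcal{D}(\mathcal{R})}\right\rangle ^{\partial\mathcal{A}_{1,2}(\ell),+\backslash-}\right]\le\prod_{i=1}^{m}\left(1-c\exp\left(-C(J/\varepsilon)^{2}\right)\right)=\left(1-c\exp\left(-C(J/\varepsilon)^{2}\right)\right)^{|\mathcal{R}|}.
\]
At zero temperature, where the disagreement set is $\{\sigma^{+}=1,\sigma^{-}=-1\}$ with $\sigma^{+}\ge\sigma^{-}$, the ground states restricted to $U_{i}$ are monotone deterministic functions of the field there, so the conditioning simplifies and it even suffices that $\sigma^{-}$ alone has a $+1$ crossing of the short direction of $R_{i}$.

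The heart of the matter is the single-rectangle blocking estimate, which is \cite[Theorem~5.2]{expdecay} and which I would take essentially from there. Conceptually it is an Imry--Ma computation: to nucleate a monochromatic crossing of $R$ across its short direction one needs the random field to be favourable over a region of area $\asymp\ell(R)^{2}$, i.e.\ to overcome a surface-energy cost of order $J\ell(R)$; the required field fluctuation has total size of order $J\ell(R)/\varepsilon$, which is $\asymp J/\varepsilon$ standard deviations of the Gaussian field-sum over an area $\asymp\ell(R)^{2}$ — a number \emph{independent} of $\ell(R)$ — whence the probability $\ge c\exp(-C(J/\varepsilon)^{2})$ with universal $c,C$. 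Making this rigorous as a temperature-uniform energy/Peierls estimate is the main obstacle: the delicate point is that a merely favourable \emph{total} fluctuation of the field could concentrate away from where a spanning wrong-sign cluster sits, and excluding this is what forces the neighbourhood $U_{i}$ to be of roughly square shape — for a thin strip the blocking probability would degrade like $\exp(-C(J/\varepsilon)^{2}\ell(R))$ — which in turn is why the aspect ratio $5$ of the rectangles is assumed. At positive temperature one needs in addition a finite-energy / cluster-expansion-type estimate, uniform in $T$, to pass from "the field in $U_{i}$ is favourable" to "the coupled configuration contains a barrier in $U_{i}$"; since the underlying energy comparisons do not see the temperature, this is expected to go through with the constants unchanged.
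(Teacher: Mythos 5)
The paper offers no proof of this lemma: it is imported verbatim as \cite[Theorem 5.2]{expdecay}, and the only work done around it in Section \ref{sec:Upper-Bound-in} is the separate geometric argument converting its conclusion into hypothesis \textbf{H2}. Your proposal is therefore not in conflict with the paper in the sense that you, too, ultimately take the essential estimate from \cite{expdecay}; but the scaffolding you build around that citation is both unnecessary and, as written, unsound. It is unnecessary because \cite[Theorem 5.2]{expdecay} is already the full multi-rectangle statement with the product bound $\left(1-c\exp\left(-C(J/\varepsilon)^{2}\right)\right)^{|\mathcal{R}|}$, not merely a single-rectangle blocking estimate, so there is nothing left to iterate; your attribution of only the one-rectangle bound to that theorem is a misreading.

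The unsound step is the passage from disjointness of the neighbourhoods $U_{i}$ to conditional independence of the crossing events. The disagreement percolation is sampled from a coupling of Gibbs measures on all of $\mathcal{A}_{1,2}(\ell)$, so the event that $R_{i}$ is crossed is \emph{not} measurable with respect to the field in $U_{i}$ together with the configuration on $\partial U_{i}$ without an explicit conditioning scheme, and your zero-temperature remark that the ground state restricted to $U_{i}$ is a (monotone, deterministic) function of the field in $U_{i}$ alone is false: the ground state in $\Lambda(L)$ depends on the field everywhere through the global energy minimization. The assertion that, conditionally on everything previously revealed, a blocking barrier appears in $U_{i}$ with probability at least $c\exp(-C(J/\varepsilon)^{2})$ uniformly over that conditioning is precisely the content of the cited theorem (proved there via a field-resampling/perturbation argument combined with monotonicity in the field and in boundary conditions), so your sketch presupposes what it sets out to establish. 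The Imry--Ma heuristic for the exponent $(J/\varepsilon)^{2}$ is the right intuition, but if you intend to use the lemma you should cite it as the paper does, and if you intend to prove it, the localization/conditioning step is the part that requires an actual argument.
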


Unfortunately, this weaker notion of well separatedness, alongside
the requirement of only having rectangles with length greater than
10 does not give us hypothesis \textbf{H2 }quite yet. What follows
is a rather technical argument on how we can convert this into a system
of random curves satisfying hypothesis \textbf{H2 }with parameters:
\begin{equation}
\sigma=200,\:\rho=1-c\exp\left(-C\left(\frac{J}{\varepsilon}\right)^{2}\right).\label{eq:h2 parameters}
\end{equation}
Let $\mathcal{R}$ be a collection of well-separated rectangles in
$[-7,7]^{2}\backslash(-5,5)^{2}$ of aspect ratio $\sigma=200$ and
length $>\delta$. For each rectangle $R\in\mathcal{R}$, we define
a rectangle $R'$ via the following: if the short side of $R$ is
of length $a$, we define $R'$ to be the $3a\times15a$ rectangle
with the same center as $R$, such that the sides of length $10a$
in $R'$ are parallel to those of length $a$ in $R$ . This way,
if $R$ is crossed by a curve, so is $R'$. Denote by $\mathcal{R}'$
the collection of all such sub-rectangles with the additional constraint
that their diameter is less than $1/1600$. Then for any pair $R_{1}',R_{2}'\in\mathcal{R}'$,
as the distance from $R_{1}$ to $R_{2}$ is at most $\max(\diam(R_{1}),\diam(R_{2}))$.
Denote $a_{1},a_{2}$ the lengths of the short sides of $R_{1},R_{2}$
respectively. Then the euclidian distance between $R_{1}',R_{2}'$
is at least:
\begin{multline}
\max(\diam(R_{1}),\diam(R_{2}))-2a_{1}-2a_{2}\geq200\max(a_{1},a_{2})-2a_{1}-2a_{2}\\
\geq194\max(a_{1},a_{2})\geq180\max(a_{1},a_{2})=60\max(\ell(R_{1}'),\ell(R_{2}').\label{eq:technical argument}
\end{multline}
and since $\ell_{1}$ distance is greater than euclidean distance,
we get that the collection $\mathcal{R}'$ satisfies the conditions
of Lemma \ref{lem:ron 5.2}, and so the probability of there being
a crossing of all rectangles in $\mathcal{R}'$ and in particular
of all rectangles in $\mathcal{R}$ is less than $\left(1-c\exp\left(-C\left(\frac{J}{\varepsilon}\right)^{2}\right)\right)^{\left|\mathcal{R}'\right|}$.
Finally, since the size of $\mathcal{R}'$ is only a constant less
than that of $\mathcal{R}$ (as there is a bounded finite number of
rectangles that can fail the diameter condition), we get that the
system of random curves $(\mathcal{F}_{\delta})$ we defined indeed
satisfies hypothesis \textbf{H2 }with parameters (\ref{eq:h2 parameters}).

\subsection{Proof of Theorem \ref{thm:alpha and l bounds}}

Applying Theorem \ref{thm:exact roughness} to the curves $(\mathcal{F_{\delta}})$
defined earlier, we get that for $\alpha=e^{-C(\frac{J}{\varepsilon})^{2}}$
and every $\mu,\delta>0$,
\begin{equation}
\pp_{\delta}(T_{1+2\alpha,1;\delta}<\mu)\leq\mu^{c\left(\frac{\varepsilon}{J}\right)^{2}},\label{eq:T variable reminder}
\end{equation}
where $T_{1+2\alpha,1;\delta}=\inf_{C\in\mathcal{F}_{\delta},\diam(C)\geq1}\capi_{1+2\alpha;\delta}(C)$.

Should the event $A_{\alpha,\ell}$ from Lemma \ref{lem:5.1 ron}
occur for this value of $\alpha$ and some integer $\ell>0$, we would
get a crossing of the annulus in the disagreement percolation which
has less than $\ell^{1+\alpha}$ steps, and so a curve in $\mathcal{F}_{\delta}$,
where $\delta:=c/\ell$, of length at most $C\ell^{\alpha}$ which
crosses $[-7,7]^{2}\backslash(-5,5)^{2}$. Denoting this crossing
curve by $\gamma$, note that $\gamma$ must have diameter greater
than 1, and that we may cover $\gamma$ by at most $\ell^{1+\alpha}$
disks of diameter $\delta$ (centered at each of the vertices in the
rescaled disagreement path). So by (\ref{eq:capprop2}) we obtain
that:
\begin{equation}
\ell^{1+\alpha}\geq\capi_{1+2\alpha;\delta}(\gamma)\cdot\left(C/\ell\right)^{-1-2\alpha}.\label{eq:capbound peniul}
\end{equation}
Therefore $\capi_{1+2\alpha;\delta}\leq C\ell^{-\alpha}$, and in
particular $T_{1+2\alpha,1;\delta}\leq C\ell^{-\alpha}$. We conclude
by (\ref{eq:T variable reminder})
\begin{multline}
\ee\left[\left\langle \oo_{A_{\alpha,\ell}}\right\rangle ^{\partial\mathcal{A}_{1\text{,2}}(\ell),+\backslash-}\right]\leq\pp_{c/\ell}(T_{1+2\alpha,1;c/\ell}\leq C\ell^{-\alpha})\\
\leq\left(C\ell^{-\alpha}\right)^{c\left(\frac{\varepsilon}{J}\right)^{2}}\leq e^{-c\log(\ell)\alpha(\frac{\varepsilon}{J})^{2}}.\label{eq:bound on prob of crossing}
\end{multline}
The last expression is less than a universal constant when $\ell\geq e^{e^{C(\frac{J}{\varepsilon})^{2}}}$.
Recalling that we seek $\ell_{1}=\ell_{0}^{2}$, where $\ell_{0}$
is the minimal value for which $\ee\left[\left\langle \oo_{A_{\alpha,\ell}}\right\rangle ^{\partial\mathcal{A}_{1\text{,2}}(\ell),+\backslash-}\right]\leq\kappa<1$
we conclude that $\ell_{1}=\exp\left(\exp\left(O\left(\frac{J}{\varepsilon}\right)^{2}\right)\right))$,
which combined with (\ref{eq:2}) gives us our desired bound on the
correlation length.\qed

\section{\label{sec:Lower-Bound}Lower Bound}

\subsection{A Brief Overview}

We will now give a lower bound for the correlation length in the following
sense: 
\begin{thm}
\label{thm:lowerbound}In the \textbf{zero-temperature} 2D RFIM Ising
model with sufficiently small field strength $\varepsilon$ ,coupling
constant $J>0$, and inside the box $\Lambda(L)$ centered at 0 with
positive boundary conditions
\[
H(\sigma)=-J\sum_{u\sim v}\sigma_{u}\sigma_{v}+\varepsilon\sum_{v}h_{v}\sigma_{v}.
\]
Where the random fields $h_{i}$ are independent random variables
satisfying $\ee[h_{i}]=0$ and a sub-Gaussian bound $\pp(\left|h_{i}\right|>t)<e^{-\half t^{2}}$.
Then for all $0<\delta<\frac{1}{2}$ there exists a constant $C=C(\delta)>0$
independent of $\varepsilon,J,L$ such whenever $L<e^{C\left(\frac{J}{\varepsilon}\right)^{\frac{2}{3}}}$then
\[
\pp(\forall i,\sigma_{i}^{+}=1)>1-\delta.
\]
Where $\sigma_{i}^{+}$ denotes the spin at $i$ in the ground state.
In particular, we get a lower bound $\zeta_{2}\geq e^{C\left(\frac{J}{\varepsilon}\right)^{\frac{2}{3}}}$
on the correlation length, at zero temperature.
\end{thm}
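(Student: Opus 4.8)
The plan is to implement the Fisher--Fr\"{o}hlich--Spencer coarse-graining (or Peierls-type contour counting with a multiscale bookkeeping of the field energy) adapted to two dimensions. Recall that at zero temperature the ground state $\sigma^{+}$ in $\Lambda(L)$ with $+$ boundary conditions minimizes $H$, so the set $\{i:\sigma_i^+=-1\}$ is a union of Peierls contours, and flipping all spins inside a contour $\Gamma$ back to $+1$ changes the energy by $2J|\Gamma|$ (the surface cost) minus $2\varepsilon \sum_{v\in\mathrm{int}(\Gamma)} h_v$ (the field gain). Thus a necessary condition for some spin to be $-1$ is the existence of a contour $\Gamma$ surrounding it (or, more precisely, of a contour in the ground-state contour ensemble) for which $\varepsilon\,\bigl|\sum_{v\in\mathrm{int}(\Gamma)}h_v\bigr|\ge J|\Gamma|$. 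First I would reduce Theorem~\ref{thm:lowerbound} to a statement that with probability $>1-\delta$ there is \emph{no} such "profitable" contour anywhere in $\Lambda(L)$: by a standard argument (monotonicity / the fact that the minimizer's contours are pairwise compatible and each must individually be non-disadvantageous relative to its interior flip) the event $\{\exists i:\sigma_i^+=-1\}$ is contained in the event that a profitable contour exists.

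The core estimate is then a union bound over contours. The hard part --- and the reason the bound is only $L\le \exp(C(J/\varepsilon)^{2/3})$ rather than polynomial --- is that a naive Peierls bound fails: the number of contours of length $n$ through a fixed point is $\sim C^n$, while for a \emph{fixed} contour of length $n$ enclosing area $A\lesssim n^2$ the Gaussian field sum $\sum_{v\in\mathrm{int}(\Gamma)}h_v$ has standard deviation $\sqrt{A}$, so $\pp\bigl(\varepsilon|\sum|\ge Jn\bigr)\approx \exp(-c J^2 n^2/(\varepsilon^2 A))$, which for the "fattest" contours ($A\sim n^2$) is only $\exp(-c(J/\varepsilon)^2)$ --- too weak to beat $C^n$ once $n\gtrsim 1$. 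The fix, following Fisher--Fr\"{o}hlich--Spencer, is a multiscale/coarse-graining decomposition of the contour: one does not estimate the field energy of the whole interior at once but organizes it hierarchically over dyadic scales, so that a contour of diameter $R$ is built from $\sim R/R'$ pieces at scale $R'$, and the entropy factor $C^n$ at each scale is paid for by a Gaussian small-ball/large-deviation factor at that scale; summing the geometric series over scales one extracts an effective cost that, balanced against the entropy, yields a critical diameter of order $\exp(c(J/\varepsilon)^{2/3})$. Concretely I would: (1) fix a contour $\Gamma$ of diameter $R$ and cover it by boxes of a carefully chosen intermediate scale; (2) use the assumed sub-Gaussian tail $\pp(|h_i|>t)<e^{-t^2/2}$ together with independence to bound, for each coarse box, the probability that the partial field sum over that box is large enough to "pay" for the portion of $\Gamma$'s perimeter attributable to it; (3) combine these via independence across boxes into an exponential-in-$(R/\ell)$ bound with the right constant; (4) sum over all contours of diameter $R$ through a fixed point (entropy $\le e^{CR}$, up to the number-of-shapes factor) and over all $R$ and all starting points in $\Lambda(L)$.

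The optimization in step (2)--(3) is where the exponent $2/3$ emerges, and I would present it as follows. If one works at a single scale $\ell$, a contour of diameter $R$ contains $\gtrsim R/\ell$ disjoint $\ell\times\ell$ boxes along a "diameter-realizing" path; for $\Gamma$ to be profitable one roughly needs the field energy collected from the interior to match $J\cdot(\text{perimeter})$, and distributing this demand over the $\sim R/\ell$ boxes, a typical box must contribute $\gtrsim J\ell$ worth of field energy from its $\ell^2$ sites, an event of Gaussian probability $\le \exp(-c J^2\ell^2/\varepsilon^2)$ per box (the field sum over an $\ell\times\ell$ box has standard deviation $\varepsilon\ell$, so demanding it reach $J\ell$ costs $\exp(-c(J/\varepsilon)^2)$ --- \emph{independent of $\ell$}, which is the crucial point, wait: rather, demanding the \emph{energy} $\varepsilon|\sum_{\text{box}}h|\ge J\ell$ with $|\sum_{\text{box}}h|$ of order $\ell$ costs $\exp(-c (J/\varepsilon)^2 \cdot (\text{something}))$ --- I would set up the scale $\ell$ precisely so that the per-box cost is $\exp(-c(J/\varepsilon)^2/\ell^{?})$ and then balance). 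The honest bookkeeping, which I would carry out in Section~\ref{sec:Lower-Bound} and Appendix~\ref{sec:Appendix:-Proving-the}, is: the total cost of $\Gamma$ is $\ge \exp(-C n (\varepsilon/J)^2 \cdot \ell)$ while the number of such contours is $\le \exp(Cn/\ell)$ after coarse-graining at scale $\ell$ (each coarse-step has $O(1)$ choices and there are $n/\ell$ of them), so the union bound over contours of coarse-length $n/\ell$ through a point is controlled once $n/\ell \le c n (\varepsilon/J)^2 \ell$, i.e. $\ell^2 \ge C(J/\varepsilon)^2$, i.e. $\ell \ge C J/\varepsilon$ --- and then the surviving exponential decay rate in the number of coarse steps $n/\ell$ is a constant, giving summability over $n$ and over the $L^2$ choices of base point, hence $L^2 e^{-c \cdot 1}$-type control, which forces $L \lesssim$ ... hmm, this single-scale version only gives $L$ polynomial, so the genuine multiscale hierarchy of Fisher--Fr\"{o}hlich--Spencer is essential to push to $\exp(c(J/\varepsilon)^{2/3})$. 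I therefore expect the main obstacle to be getting this hierarchical energy-vs-entropy bookkeeping exactly right: one must allocate a "budget" of perimeter cost across dyadic scales, at each scale pay entropy $e^{C\cdot(\#\text{pieces at that scale})}$ and gain a Gaussian factor, and verify the geometric series over scales sums to give effective decay $\exp(-c R^{3}(\varepsilon/J)^2/(\log R)^{?})$ or similar, beating the entropy and the $L^2$ union bound precisely when $R \le \exp(c(J/\varepsilon)^{2/3})$. The rest --- the reduction to "no profitable contour," the sub-Gaussian tail estimates, and the final union bound --- is routine given that hierarchical core.
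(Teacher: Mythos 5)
Your overall architecture --- reduce to the nonexistence of a ``profitable'' region, then a coarse-grained union bound in the style of Fisher--Fr\"{o}hlich--Spencer --- is the same as the paper's, but the two steps you defer as ``routine'' or ``to be gotten exactly right'' are precisely where the content lies, and as written both have gaps. First, the reduction. Flipping the interior of an outermost ground-state contour $\gamma$ changes the energy by $-2J|\gamma|+2\varepsilon\sum_{v\in\mathrm{int}(\gamma)}h_v\sigma_v$; since the interior is generally not monochromatic, you only learn that $\varepsilon\sum_v h_v\sigma_v\ge J|\gamma|$, an event involving the field-dependent configuration $\sigma$ that cannot be union-bounded over a deterministic family of sets. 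The paper's fix is a deterministic iteration into holes: a non--simply-connected monochromatic component has a hole; pick a vertex there and repeat until you land on a \emph{simply connected, monochromatic} component $\Gamma$; flipping that one yields the clean inequality $\left|\sum_{i\in\Gamma}h_i\right|\ge\frac{J}{2\varepsilon}\left|\partial\Gamma\right|$ over the deterministic class of simply connected subsets of $\Lambda(L)$. This is exactly the device that circumvents the ``no contours within contours'' assumption of \cite{fisher}, and your appeal to ``monotonicity / compatibility of the minimizer's contours'' does not supply it.

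Second, the multiscale bookkeeping. You correctly diagnose that the single-scale Peierls bound fails, but you never derive the exponent $2/3$, and your guessed forms of the final estimate (effective decay $\exp(-cR^{3}(\varepsilon/J)^{2})$, or a per-box balance at $\ell\ge CJ/\varepsilon$) are not what comes out. The actual mechanism: for $\Gamma$ with $|\partial\Gamma|=\ell$ one builds coarse grainings $\Gamma=\Gamma_0,\Gamma_1,\dots,\Gamma_{N_\ell}$ at dyadic scales $2^k$ with $N_\ell=\lfloor\log_2\ell\rfloor$, and telescopes $\sum_{i\in\Gamma}h_i$ through the increments $\Gamma_{k+1}\setminus\Gamma_k$ and $\Gamma_k\setminus\Gamma_{k+1}$. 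If the total exceeds $\frac{J\ell}{2\varepsilon}$, some level-$k$ increment must exceed $c_\ell=\frac{J\ell}{8N_\ell\varepsilon}$; the threshold is split over $N_\ell\sim\log\ell$ levels, which is where a factor $\log(\ell)^2$ enters the Gaussian exponent. The level-$k$ increment has $O(2^k\ell)$ sites (Key Lemma 1), giving a tail probability $\exp\bigl(-c(J/\varepsilon)^2\,\ell/(2^k\log(\ell)^2)\bigr)$, while the number of level-$k$ coarse sets is $L^2\exp\bigl(C(\ell k+\ell\log\ell)/2^k\bigr)$ (Key Lemma 2). The per-level balance is thus $\frac{\ell}{2^k}\bigl(C\log\ell-c(J/\varepsilon)^2/\log(\ell)^2\bigr)$, negative exactly when $\log(\ell)^3\lesssim(J/\varepsilon)^2$; since $\ell$ ranges up to $4L^2$, this is the sole source of the condition $L\le\exp\bigl(c(J/\varepsilon)^{2/3}\bigr)$. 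Without this specific allocation --- threshold divided evenly over $\log\ell$ scales, entropy per unit of coarse perimeter of order $\log\ell$ --- the exponent does not emerge, so this is a genuine gap rather than a deferred computation.
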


Note that we do not require the random variables $h_{i}$ to be identically
distributed. This gives us a slightly more generalized result. We
can easily work around this limitation by using the following Hoeffding
type inequality \cite[Theorem 2.2.6]{vershynin2018high}
\begin{lem}
\label{lem:gaussianhoefsding}Let $X_{1},X_{2},..$ be a collection
of independent random variables with $\ee[X_{i}]=0$ for all $i$.
Suppose there exists a constant $\psi$ such that $\ee[e^{X_{i}^{2}/\psi^{2}}]\leq2$
for all $i$. Then there exists a universal constant $c$ such that:
\[
\pp\left(\left|\sum_{i=1}^{n}X_{i}\right|\geq t\right)\leq\exp\left(-\frac{ct}{n\psi^{2}}\right).
\]
\end{lem}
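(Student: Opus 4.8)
\emph{Proof plan.} This is a standard sub-Gaussian concentration inequality — indeed essentially \cite[Theorem 2.2.6]{vershynin2018high} — so in a final writeup one may simply cite it; here I outline the argument one would carry out. The plan is to pass from the Orlicz-type hypothesis $\ee[e^{X_i^2/\psi^2}]\le 2$ to a moment-generating-function bound for each $X_i$, tensorize over the independent summands, and close with an exponential Chernoff bound. In fact it is no harder to prove the (stronger) two-sided estimate
\[
\pp\!\left(\Big|\textstyle\sum_{i=1}^{n} X_i\Big|\ge t\right)\le 2\exp\!\left(-\frac{ct^2}{n\psi^2}\right),
\]
from which the bound asserted in the lemma follows after adjusting the constant.

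\emph{Step 1: from the Orlicz bound to a moment-generating-function bound for a single variable.} Markov's inequality applied to $e^{X_i^2/\psi^2}$ first gives the sub-Gaussian tail $\pp(|X_i|>u)\le 2e^{-u^2/\psi^2}$ for all $u\ge0$. Using $\ee[X_i]=0$ I would then show
\[
\ee\!\left[e^{\lambda X_i}\right]\le e^{C\lambda^2\psi^2}\qquad\text{for all }\lambda\in\rr,
\]
with $C$ a universal constant. For $|\lambda|\psi$ below a small absolute threshold one expands $e^{\lambda X_i}=1+\lambda X_i+\sum_{k\ge 2}\lambda^k X_i^k/k!$, notes that the linear term has vanishing expectation, bounds $\ee|X_i|^k = k\int_0^\infty u^{k-1}\pp(|X_i|>u)\,du \le C\,k!\,\psi^k$ via the tail estimate and a $\Gamma$-function computation, and sums the resulting geometric-type series. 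For $|\lambda|\psi$ above that threshold one instead uses $\lambda X_i\le \half(\lambda^2\psi^2+X_i^2/\psi^2)$, so that $\ee[e^{\lambda X_i}]\le e^{\lambda^2\psi^2/2}\,\ee[e^{X_i^2/(2\psi^2)}]\le \sqrt{2}\,e^{\lambda^2\psi^2/2}$ by Cauchy--Schwarz and the hypothesis, the residual factor $\sqrt2$ being absorbed because $\lambda^2\psi^2$ is bounded below in this range. (This is precisely the standard equivalence between the $\psi_2$-Orlicz norm, the sub-Gaussian tail, and the sub-Gaussian MGF; cf.\ \cite[Prop.~2.5.2]{vershynin2018high}.)

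\emph{Step 2: tensorize and apply Chernoff.} By independence,
\[
\ee\!\left[e^{\lambda\sum_{i=1}^{n} X_i}\right]=\prod_{i=1}^{n}\ee\!\left[e^{\lambda X_i}\right]\le e^{Cn\lambda^2\psi^2},
\]
so for every $\lambda>0$ the exponential Markov inequality gives $\pp(\sum_i X_i\ge t)\le e^{-\lambda t+Cn\lambda^2\psi^2}$. Optimizing over $\lambda$ (taking $\lambda=t/(2Cn\psi^2)$) yields $\pp(\sum_i X_i\ge t)\le \exp(-ct^2/(n\psi^2))$; the same bound applied to $-\sum_i X_i$ and a union bound give the displayed two-sided estimate, which in turn implies the statement.

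\emph{Main obstacle.} The only step with genuine content is the equivalence in Step~1 between the hypothesis $\ee[e^{X_i^2/\psi^2}]\le 2$ and a bound on $\ee[e^{\lambda X_i}]$ valid for \emph{all} $\lambda$: the split into small and large $\lambda$, together with the moment/$\Gamma$-function bookkeeping, is the part demanding care. The tensorization and the Chernoff optimization in Step~2 are entirely routine, and — as already noted — the whole argument can be replaced by a citation to \cite[Theorem 2.2.6]{vershynin2018high}.
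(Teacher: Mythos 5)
Your approach is essentially the same as the paper's: the paper simply quotes \cite[Theorem 2.2.6]{vershynin2018high} without giving a self-contained proof, and your outline is the standard route from the Orlicz hypothesis to the sub-Gaussian MGF bound, tensorization, and Chernoff, exactly as in Vershynin. One small caveat: the lemma as printed has $t$ in the exponent rather than $t^{2}$, and dropping the prefactor $2$; this is almost certainly a typo in the paper, since the downstream application in (\ref{eq:probability that the field is bigger}) invokes the $t^{2}$ form (with $t$ of order $\frac{J}{\varepsilon}\left|\partial\Gamma\right|$ and $n=\left|\Gamma\right|$, the isoperimetric bound $\left|\partial\Gamma\right|\gtrsim\sqrt{\left|\Gamma\right|}$ only yields the claimed $e^{-c(J/\varepsilon)^{2}}$ if the exponent is quadratic in $t$). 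Your derivation correctly produces $2\exp\!\bigl(-ct^{2}/(n\psi^{2})\bigr)$, which is what is actually used; however, your final remark that this ``implies the statement after adjusting the constant'' is not a valid deduction for all $t$ (for $t\psi^{-1}\sqrt{n}$ below a threshold the two-sided $t^{2}$ bound exceeds $1$ while $\exp\!\bigl(-c\,t/(n\psi^{2})\bigr)<1$); the honest resolution is simply to correct the exponent in the statement to $t^{2}$.
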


In order to prove the theorem, we give a modified version of the proof
sketch presented by Fisher, Fr\"{o}hlich and Spencer\cite{fisher}. They
gave a proof of magnetization in the 3D RFIM, but under the assumption
what is called ``no contours within contours''. Essentially, they
assume an argument that is only known to be true in the case that
each $-$ component does not have any ``holes''. E.g, all components
of $-$ spins are simply connected.

Thankfully, since we only care about correlation length in 2D, as
it turns out we can circumvent this issue via the following steps.
\begin{enumerate}
\item First, we prove that for that sufficiently small box size, the probability
of seeing a \textbf{simply connected }set containing 0 with disagreements
in the boundary is low. This will be done using the methods developed
in \cite{fisher}.
\item Then, we extend this argument to simply connected sets with disagreements
in the boundary, but not necessarily ones containing the origin 0.
That is, we will show that up to length $e^{C\varepsilon^{-\frac{2}{3}}}$
we should expect with high probability that there will be \textbf{no}
simply connected sets with constant $+$ or $-$ signs with disagreements
on the boundary.
\item After that, we deduce that there must be no sites with a ``$-$''
configuration with high probability via the following deterministic
argument; Suppose there was a site with a ``$-$'' configuration,
then we may look at the maximal connected component containing this
site with all ``$-$'' configurations. As we show in 2., with a
high probability there are no simply connected constant sign maximal
components. In particular, our ``$-$'' component cannot be simply
connected, so it must contain a ``$+$'' component inside of it.
But that + component also cannot be simply connected, so it must contain
a ``$-$'' component within. We repeat to infinity, and reach an
obvious contradiction as we are in a finite discrete lattice.
\end{enumerate}
To formalize these steps, we first give a few definitions: 
\begin{defn}
\label{def:connected}A subset $A\subseteq\mathbb{Z}^{2}$ is called
\textbf{connected }if it is connected as a sub-graph of the integer
lattice. Furthermore, $A$ is called \textbf{simply connected }if
$A$ is connected and also $\mathbb{Z}^{2}\backslash A$ is connected.
We call $\left|A\right|$ the \textbf{area }of $A$.
\end{defn}

\begin{defn}
\label{def:boundry}The boundary of a subset $A\subseteq\mathbb{Z}^{2}$,
$\partial A$, is the collection of all edges in $\mathbb{Z}^{2}$
with one end in $A$ and the other in $\mathbb{Z}^{2}\backslash A$.
We call $\left|\partial A\right|$ the \textbf{perimeter }of $A$,
or the \textbf{length }of $\partial A$.
\end{defn}

\begin{defn}
\label{def:component}For a given configuration $\sigma:\Lambda(L)\rightarrow\{-1,1\}$,
we say a subset $A\subseteq\Lambda(L)$ is a \textbf{connected component
}of $\sigma$ if $A$ is connected, $\sigma$ is constant on $A$,
and there is no $B\supset A$ such that $B$ is connected and $\sigma$
is constant on $B$ (e.g, there are disagreements on the boundary
of $A$).
\end{defn}

\subsection{A Random Field Argument}

We first rephrase our problem to be one of the random field instead
of the Ising model. Suppose that in a ground state for a given random
field $(h_{i})$ then the origin 0 is contained in a simply connected
component $\Gamma$. Then we should expect the field inside $\Gamma$
to be bigger than the length of the boundary of $\Gamma$ in absolute
value. Indeed, if:
\begin{equation}
\left|\varepsilon\sum_{i\in\Gamma}h_{i}\right|<\frac{J}{2}\left|\partial\Gamma\right|,\label{eq:field is bigger 1}
\end{equation}
then we may reduce the energy of the configuration by changing all
the signs inside $\Gamma$ to match the signs next to the boundary,
a contradiction to $\Gamma$ being in the ground state.

Thus, we deduce that if 0 is contained in such simply connected component
$\Gamma$, it must suffice that:
\begin{equation}
\left|\sum_{i\in\Gamma}h_{i}\right|\geq\frac{J}{2\varepsilon}\left|\partial\Gamma\right|.\label{eq:field is bigger 2}
\end{equation}
So we will prove the following:
\begin{thm}
\label{thm:lowerboundRF}For any $\delta>0$ there exists a constant
$c>0$ such that for any $L<\exp\left(c(\frac{J}{\varepsilon})^{\frac{2}{3}}\right)$,
then:\\
\begin{equation}
\pp(\textup{There exists a simply connected set \ensuremath{\Gamma}\ensuremath{\ensuremath{\subseteq\Lambda}(L)} such that \ensuremath{\left|\sum_{i\in\Gamma}h_{i}\right|\geq\frac{J}{2\varepsilon}\left|\partial\Gamma\right|)<\delta}}.\label{eq:maintheoremRF}
\end{equation}
\end{thm}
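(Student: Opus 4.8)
The plan is to bound the probability in \eqref{eq:maintheoremRF} by a union bound over simply connected sets $\Gamma \subseteq \Lambda(L)$, organizing the sum according to the perimeter $p := |\partial\Gamma|$. The key point is the classical combinatorial fact that a simply connected subset of $\zz^2$ is determined by its boundary contour, and the number of such contours of length $p$ passing near the origin -- or more precisely, the number of simply connected $\Gamma \subseteq \Lambda(L)$ with $|\partial\Gamma| = p$ -- is at most $|\Lambda(L)|\cdot 3^p$ (choose a starting edge, then a self-avoiding-type walk). A simply connected set with perimeter $p$ has area $|\Gamma| \le (p/4)^2 \le p^2$ by the isoperimetric inequality on $\zz^2$. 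For a fixed such $\Gamma$, the random variables $(h_i)_{i\in\Gamma}$ are independent, mean zero, and sub-Gaussian with a uniform parameter $\psi = O(1)$, so Lemma \ref{lem:gaussianhoefsding} gives
\[
\pp\left(\left|\sum_{i\in\Gamma}h_i\right| \ge \frac{J}{2\varepsilon}p\right) \le \exp\left(-\frac{c(J/\varepsilon)p/2}{|\Gamma|\psi^2}\right) \le \exp\left(-\frac{c'(J/\varepsilon)}{p}\right),
\]
using $|\Gamma| \le p^2$ (and $|\partial\Gamma| = p$).

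Combining the counting bound and the probabilistic bound, the quantity in \eqref{eq:maintheoremRF} is at most
\[
\sum_{p \ge 4} |\Lambda(L)|\, 3^p \exp\!\left(-\frac{c'(J/\varepsilon)}{p}\right) = |\Lambda(L)| \sum_{p\ge 4} \exp\!\left(p\log 3 - \frac{c'(J/\varepsilon)}{p}\right).
\]
The summand is small precisely when $p\log 3 \ll c'(J/\varepsilon)/p$, i.e. $p \ll \sqrt{J/\varepsilon}$; the crossover happens around $p_\star \asymp (J/\varepsilon)^{1/2}$, where the exponent is $\asymp (J/\varepsilon)^{1/2}$. Actually the optimization is slightly more delicate: the exponent $p\log 3 - c'(J/\varepsilon)/p$ is maximized (as a function of $p$, the bad regime being large $p$) -- but here both terms we want, so in fact we need to be careful that for large $p$ the first term wins. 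The fix is that for $p$ larger than $\sim L$ there are no such $\Gamma$ at all (perimeter is bounded by $O(L)$), and for $p$ up to $O(L)$ the first term $p \log 3$ is at most $O(L) = O(\exp(c(J/\varepsilon)^{2/3}))$ while we need the negative term to dominate. This forces the constraint $c'(J/\varepsilon)/p \gtrsim L$ for all relevant $p$, i.e. we need $L \lesssim (J/\varepsilon)/p$; the worst case $p \asymp \log L$ (beyond which the exponential weight $3^p$ against a polynomially-growing area forces... ) -- let me re-examine. The honest statement: we split the sum at $p_0 := A(J/\varepsilon)^{2/3}$ for a small constant $A$. For $p \le p_0$, the exponent is $\le p_0 \log 3 - c'(J/\varepsilon)/p$; hmm, this is not uniformly negative either since small $p$ makes the second term huge and negative, which is good, but $p$ near $p_0$ gives exponent $\approx A(J/\varepsilon)^{2/3}\log 3 - c'(J/\varepsilon)^{1/3}$ which is positive.

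I will therefore carry out the optimization directly: the geometric series $\sum_p \exp(p\log 3 - c'(J/\varepsilon)/p)$ is dominated, after noting $\exp(-c'(J/\varepsilon)/p)$ is increasing in $p$ while the terms only make sense for $p \le C L$, by the tail where $p$ is comparable to its maximum allowed value. So the whole sum is at most $CL \cdot \exp(C L \log 3 - c'(J/\varepsilon)/(CL))$ roughly -- and for this to be $< \delta/|\Lambda(L)| \sim \delta L^{-2}$ we need $CL\log 3 \le \frac{c'}{2}\cdot\frac{J/\varepsilon}{CL}$, i.e. $L^2 \lesssim J/\varepsilon$, giving only $\zeta_2 \gtrsim (J/\varepsilon)^{1/2}$ -- not the claimed $\exp(c(J/\varepsilon)^{2/3})$. \emph{This is the main obstacle}, and it signals that the naive union bound over all simply connected $\Gamma$ is too lossy: the paper must exploit the ``coarse graining'' of Fisher--Fr\"ohlich--Spencer, whereby one does not sum over individual contours but over a renormalized/blocked structure, paying a much smaller entropy cost. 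Concretely, the right approach (and what I would develop following \cite{fisher}) is a multiscale decomposition: a large contour of perimeter $p$ can be approximated at scale $\ell_k = 2^k$ by a blocked contour with $O(p/\ell_k)$ blocks, and the field sum over $\Gamma$ is controlled hierarchically, so that the effective entropy is $\exp(O(p^{1/3}))$ rather than $3^p$, which balanced against the Gaussian cost $\exp(-c(J/\varepsilon)p/p^2)$ from area $\lesssim p^2$... no -- the correct bookkeeping yields that contours survive only up to $p \asymp (J/\varepsilon)$, hence $L \asymp$ diameter $\lesssim \exp(c(J/\varepsilon)^{2/3})$ after the multiscale gain. So the real plan is: (1) reduce to \eqref{eq:field is bigger 2} as already done; (2) reduce to the non-origin version (step 2 of the outline) by a further union bound over the location and scale of $\Gamma$; (3) set up the hierarchical coarse-graining of contours, estimating at each scale the probability that the coarse-grained field exceeds the coarse-grained perimeter cost, using Lemma \ref{lem:gaussianhoefsding} at each scale; (4) sum the scale-by-scale estimates to conclude. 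I expect step (3) -- getting the entropy/energy balance at each scale to close, with the area-to-perimeter-squared isoperimetric input feeding the sub-Gaussian tail -- to be where essentially all the work lies, and where the exponent $2/3$ (rather than $1/2$) is produced.
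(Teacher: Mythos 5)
Your proposal does not contain a proof: it carries the naive union bound to the point where it visibly fails, correctly diagnoses that the Fisher--Fr\"ohlich--Spencer coarse graining is needed, and then stops, explicitly deferring ``essentially all the work'' to a multiscale step that is never carried out. That deferred step is exactly the content of the paper's proof. Concretely, what is missing is: (i) the definition of the coarse-grained sets $\Gamma_k$ (unions of admissible $2^k\times 2^k$ blocks) together with the geometric controls $|\partial\Gamma_k|\le 8|\partial\Gamma|$, $|\Gamma_{k+1}\setminus\Gamma_k|,|\Gamma_k\setminus\Gamma_{k+1}|\lesssim 2^k|\partial\Gamma|$ and $|\Gamma_{N_\ell}|\lesssim 2^{N_\ell}\ell+\ell^2$ (Key Lemma 1); (ii) the entropy estimate that the number of possible coarse-grained sets $\Gamma_k$ arising from starting sets with $|\partial\Gamma|=\ell$ is at most $L^2\exp\left(C(\ell k+\ell\log\ell)/2^k\right)$ (Key Lemma 2, whose proof occupies the appendix); and (iii) the telescoping of $\sum_{i\in\Gamma}h_i$ into the top-scale sum over $\Gamma_{N_\ell}$ plus corridor sums over $\Gamma_{k+1}\setminus\Gamma_k$ and $\Gamma_k\setminus\Gamma_{k+1}$, each required to stay below the threshold $c_\ell=J\ell/(8N_\ell\varepsilon)$. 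With these in hand, the cost at scale $k$ and perimeter $\ell$ is roughly $\exp\left(C\ell\log(\ell)/2^k-c(J/\varepsilon)^2\ell/(2^k\log(\ell)^2)\right)$, and the exponent $2/3$ arises from requiring $\log(\ell)^3\lesssim (J/\varepsilon)^2$ for all $\ell\le 4L^2$, i.e. $L\le\exp\left(c(J/\varepsilon)^{2/3}\right)$. Your guesses about where the $2/3$ comes from (``effective entropy $\exp(O(p^{1/3}))$'', ``contours survive only up to $p\asymp J/\varepsilon$'') do not match this bookkeeping.

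A secondary point: you applied Lemma \ref{lem:gaussianhoefsding} literally and obtained a per-contour tail $\exp(-c(J/\varepsilon)/p)$. The lemma as printed has a typo (the exponent should involve $t^2$, not $t$, for a sum of sub-Gaussians); the paper's own computations use the $t^2/n$ form, which yields the per-contour bound $\exp(-c(J/\varepsilon)^2)$ \emph{uniformly in the perimeter} (equation \eqref{eq:probability that the field is bigger}). This does not rescue the naive union bound --- the $3^p$ entropy still overwhelms any $p$-independent probability --- but getting the correct tail is essential for the corridor estimates of Proposition \ref{prop:sigma prob bound} once the coarse graining is in place.
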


From simple geometry we know that $\left|\partial\Gamma\right|\geq\fourth\sqrt{\left|\Gamma\right|}$,
so applying Lemma \ref{lem:gaussianhoefsding} with $\psi=2$ gives
us that for a given simply connected component $\Gamma\ni0$ then:
\begin{equation}
\pp\left(\left|\sum_{i\in\Gamma}h_{i}\right|\geq\frac{J}{2\varepsilon}\left|\partial\Gamma\right|\right)\leq e^{-c\left(\frac{J}{\varepsilon}\right)^{2}},\label{eq:probability that the field is bigger}
\end{equation}
for some universal constant $c$. Unfortunately however, the number
of such components is exponential in the box size $L$, so we cannot
just use a union bound on all such components. To get around this
problem, we will abuse the fact that there are many such components
very similar to each other. That is the motivation behind the so-called
``coarse graining'' presented in \cite{fisher}.
\begin{rem*}
As in the previous sections, we will use $\const$ to denote some
positive universal constant which may increase from line to line.
\end{rem*}

\subsection{The Coarse Graining}
\begin{defn}
\label{def:square cover}We call the tiling of $\mathbb{Z}^{2}$ by
disjoint $2^{k}\times2^{k}$ squares $\mathcal{T}_{k}$ $(k\geq0)$.
That is:

$\mathcal{F}_{k}$=$\left\{ \{m2^{k},m2^{k}+1,..,m2^{k}+2^{k}-1\right\} \times\{n2^{k},n2^{k}+1,..,n2^{k}+2^{k}-1\}:n,m\in\mathbb{Z}\}$.
\end{defn}

\begin{defn}
\label{def:Admissable}For a subset $\Gamma\subseteq\mathbb{Z}^{2}$,
we say that a square $s\in\mathcal{T}_{k}$ is \textbf{admissible
}with respect to $\Gamma$ if $\left|s\cap\Gamma\right|\geq2^{2k-1}$.
That is, the majority of vertices in $s$ are also in $\Gamma$.
\end{defn}

We may now define the coarse gaining with respect to a starting set
$\Gamma$:
\begin{defn}
\label{def:coarse graining}For a (simply connected) set $0\in\Gamma\subseteq\mathbb{\mathbb{Z}}^{2}$,
the \textbf{coarse graining }of $\Gamma$ to be a sequence of sets
$\Gamma_{0},\Gamma_{1},\Gamma_{2},...\subseteq\mathbb{Z}^{2}$ as
follows: For each $k\geq0$, define $\Gamma_{k}$ to be the union
of all admissible $2^{k}\times2^{k}$ squares $c\in\mathcal{T}_{k}$
with respect to $\Gamma$. Note that under this definition, $\Gamma_{0}=\Gamma$.
\begin{figure}
\begin{centering}
\includegraphics[scale=0.55]{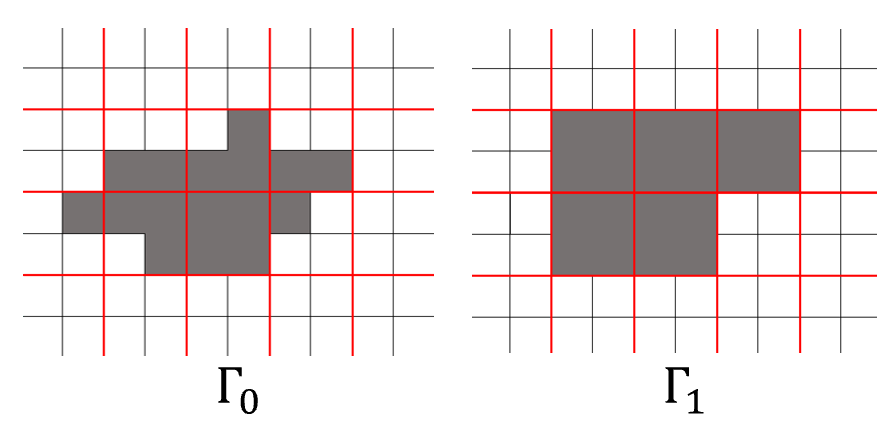}
\par\end{centering}
\caption{An example of one step of the coarse graining. If a square in the
red grid has two or more gray squares, it becomes gray in $\Gamma_{1}$,
otherwise it becomes white.}
\end{figure}
\end{defn}

The coarse graining will be useful as it will allow us to look at
the event that the field is large inside the coarse graining of a
starting set, instead of the original set. This will be very useful
as there will be far fewer coarse grained sets than simply connected
starting sets. For this, we need two key lemmas:
\begin{prop}
\textbf{(Key Lemma 1)\label{lem:lemma 1}} For any starting set $\Gamma$,
we have that for all $k>1$:
\begin{enumerate}
\item $\left|\partial\Gamma_{k}\right|<8\left|\partial\Gamma\right|$.
\item $\left|\Gamma_{k}\backslash\Gamma_{k-1}\right|,\left|\Gamma_{k-1}\backslash\Gamma_{k}\right|<16\cdot2^{k}\left|\partial\Gamma\right|$
\item $\left|\Gamma_{k}\right|<32\cdot2^{k}\left|\partial\Gamma\right|+\left|\Gamma\right|$.
\end{enumerate}
\end{prop}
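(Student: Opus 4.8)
The plan is to derive all three estimates from one observation — a dyadic square at scale $k$ that is ``balanced'' for $\Gamma$ (neither almost contained in $\Gamma$ nor almost disjoint from it) must contain a definite fraction of its side length in edges of $\partial\Gamma$ — and then count. The quantitative input I would use is the elementary isoperimetric fact \textbf{(I)}: \emph{if $R\subseteq\zz^{2}$ is an $a\times b$ box with $a\le b$ and $T\subseteq R$ satisfies $\min(|T|,|R\setminus T|)\ge\lambda|R|$, then the number $E$ of edges of $\zz^{2}$ internal to $R$ with exactly one endpoint in $T$ is at least $\min(a,\lambda b)$.} This can be proved by projecting onto the two coordinate axes: a ``mixed'' column or row contains an edge counted by $E$, so if $E<a$ there is a pure column and a pure row; since a row and a column meet in a single vertex, all pure rows must be of one type, say $T$, and there are at least $b-E$ of them, so $|R\setminus T|\le Ea$, contradicting the hypothesis once $E<\lambda b$. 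Parts 1 and 2 are then independent, and part 3 follows by summing part 2.

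\emph{Part 1.} Since $\Gamma_{k}$ is a union of scale-$k$ squares, $|\partial\Gamma_{k}|=2^{k}M$, where $M$ counts the side-adjacent pairs $\{s,s'\}$ of scale-$k$ squares with $s$ admissible and $s'$ not. For such a pair I would take $R=s\cup s'$, a $2^{k}\times2^{k+1}$ box; admissibility of $s$ gives $|\Gamma\cap R|\ge2^{2k-1}=\frac14|R|$ and non-admissibility of $s'$ gives $|R\setminus\Gamma|\ge|s'\setminus\Gamma|>2^{2k-1}$, so \textbf{(I)} with $\lambda=\frac14$ furnishes at least $2^{k-1}$ edges of $\partial\Gamma$ internal to $R$. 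Each edge of $\partial\Gamma$ is internal to at most four such boxes (those obtained from the scale-$k$ square, or pair of scale-$k$ squares, containing its endpoints), so double counting gives $M\cdot2^{k-1}\le4|\partial\Gamma|$, i.e. $|\partial\Gamma_{k}|=2^{k}M\le8|\partial\Gamma|$; the strict inequality of the statement then follows with a little more care, e.g. from the fact that a non-admissible square is \emph{strictly} less than half full.

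\emph{Part 2, and part 3.} A vertex of $\Gamma_{k}\setminus\Gamma_{k-1}$ lies in a necessarily admissible scale-$k$ square $s$ whose scale-$(k-1)$ quarter containing it fails to be admissible, so $|\Gamma_{k}\setminus\Gamma_{k-1}|=\sum_{s}n_{s}\cdot2^{2k-2}$, the sum over admissible scale-$k$ squares with $n_{s}$ the number of non-admissible quarters of $s$. Since all four quarters non-admissible would give $|\Gamma\cap s|<4\cdot2^{2k-3}=2^{2k-1}$, contradicting admissibility, we have $n_{s}\le3$; and $n_{s}\ge1$ forces a quarter with $|s_{i}\setminus\Gamma|>2^{2k-3}$, so $|s\setminus\Gamma|>\frac18|s|$ while $|\Gamma\cap s|\ge\frac12|s|$, whence \textbf{(I)} for the $2^{k}\times2^{k}$ box $s$ with $\lambda=\frac18$ yields at least $2^{k-3}$ edges of $\partial\Gamma$ internal to $s$. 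Distinct scale-$k$ squares have disjoint interiors, so at most $8|\partial\Gamma|/2^{k}$ of them have $n_{s}\ge1$, and hence $|\Gamma_{k}\setminus\Gamma_{k-1}|\le3\cdot2^{2k-2}\cdot8|\partial\Gamma|/2^{k}=6\cdot2^{k}|\partial\Gamma|<16\cdot2^{k}|\partial\Gamma|$. The bound on $|\Gamma_{k-1}\setminus\Gamma_{k}|$ is the same argument with ``admissible'' and ``non-admissible'' exchanged (now $s$ is a non-admissible scale-$k$ square possessing an admissible quarter, so $|\Gamma\cap s|\ge\frac18|s|$ and $|s\setminus\Gamma|>\frac12|s|$). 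For part 3, any $v\in\Gamma_{k}\setminus\Gamma$ has a least index $1\le j\le k$ with $v\in\Gamma_{j}$, and then $v\in\Gamma_{j}\setminus\Gamma_{j-1}$; thus $\Gamma_{k}\subseteq\Gamma\cup\bigcup_{j=1}^{k}(\Gamma_{j}\setminus\Gamma_{j-1})$ and $|\Gamma_{k}|\le|\Gamma|+\sum_{j=1}^{k}16\cdot2^{j}|\partial\Gamma|<|\Gamma|+32\cdot2^{k}|\partial\Gamma|$.

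\emph{The main obstacle.} Conceptually there is little to overcome; the delicate point is matching the precise constants $8$, $16$, $32$ rather than merely $O(1)$ multiples of them. The orders of magnitude ($|\partial\Gamma|$ and $2^{k}|\partial\Gamma|$) are forced by the scheme above, but to land exactly on the stated constants I would replace the crude estimate \textbf{(I)} by the sharp discrete isoperimetric inequality in a box (where a strip spanning the short side is extremal) and be slightly more economical in the overlap count of Part 1, charging an interior edge of $\partial\Gamma$ only to the counted dominoes through it. I do not anticipate any difficulty beyond this accounting.
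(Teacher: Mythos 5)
Your proposal is correct and follows essentially the same route as the paper: in both cases the heart of the matter is the row/column projection argument showing that a ``mixed'' dyadic box must contain on the order of $2^{k}$ edges of $\partial\Gamma$ (your fact \textbf{(I)} is exactly the paper's projection onto the shared side followed by the full-column/row count), combined with the same double counting in which each boundary edge is charged to at most four adjacent-square pairs. The only divergence is cosmetic: for part 2 you apply \textbf{(I)} directly to each admissible square containing a non-admissible quarter (obtaining the slightly better constant $6\cdot2^{k}$), whereas the paper reduces to the part-1 pair count at scale $k-1$; part 3 is identical.
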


\begin{prop}
\textbf{(Key Lemma 2)}\label{lem:lemma 2} The number of possible
coarse grained sets $\Gamma_{k}$ for a starting simply connected
set $\Gamma\subseteq\Lambda(L)$ is less than:
\begin{equation}
L^{2}\cdot\exp\left(\const\cdot\frac{\left|\partial\Gamma\right|k+\left|\partial\Gamma\right|\log\left|\partial\Gamma\right|}{2^{k}}\right).\label{eq:lemma2bound}
\end{equation}
\end{prop}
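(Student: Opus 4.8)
The plan is to count coarse-grained sets $\Gamma_k$ by exhibiting an efficient ``combinatorial description'' of $\Gamma_k$ whose length we can bound, and then count the number of possible descriptions. Since $\Gamma_k$ is a union of $2^k\times 2^k$ tiles from $\mathcal{T}_k$, specifying $\Gamma_k$ is the same as specifying which tiles are admissible; equivalently, rescaling $\mathcal{T}_k$ to the unit lattice, $\Gamma_k/2^k$ is a subset of $\mathbb{Z}^2$ and we must count the possibilities for this subset. The key point is that $\Gamma_k/2^k$ is \emph{not arbitrary}: its boundary length is controlled. Indeed, each unit boundary edge of $\Gamma_k/2^k$ corresponds to a length-$2^k$ stretch of $\partial\Gamma_k$ in the original lattice, so
\[
|\partial(\Gamma_k/2^k)| \le \frac{|\partial\Gamma_k|}{2^k} < \frac{8|\partial\Gamma|}{2^k},
\]
using part (1) of Key Lemma \ref{lem:lemma 1}. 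So I reduce the problem to: how many subsets $S\subseteq\mathbb{Z}^2$, with $0$ (or rather, a translate of the origin's tile) ``near'' $S$, have perimeter at most $P := 8|\partial\Gamma|/2^k$?

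First I would fix an anchor: the tile of $\mathcal{T}_k$ containing the origin is either admissible or not, but in any case $\Gamma_k$ lies within $\Lambda(L)$, so its rescaled version lies in a box of side $O(L/2^k)$; picking a reference point in $S$ costs a factor of at most $(L/2^k)^2 \le L^2$ — this is the source of the $L^2$ prefactor. (If one prefers, one can anchor at the tile nearest the origin that meets $\Gamma_k$.) Once an anchor point on the boundary of $S$ is fixed, the set $S$ — assuming for now it is connected with connected complement, which should follow from $\Gamma$ being simply connected together with the majority rule, or else can be handled by decomposing $\partial S$ into its connected boundary contours — is determined by tracing its outer boundary contour (and inner contours) as a closed lattice path of total length $|\partial S| \le P$. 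The number of closed lattice paths of length $\le P$ starting at a fixed point is at most $\sum_{j\le P} 4^j \le C \cdot 4^{P}$, and accounting for several contours and their relative positions (each inner contour's starting point chosen among the $O(P^2)$ sites already enclosed) contributes at most $(CP^2)^{(\text{number of contours})} \le \exp(C P \log P)$, since the number of disjoint contours is at most $P$. Multiplying, the count of possible rescaled sets is at most $\exp(C P + C P\log P) = \exp(C P \log P)$ — here I am being slightly wasteful, folding $4^P$ into $\exp(CP\log P)$, but if a cleaner bound is wanted one keeps them separate.

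Now substitute $P = 8|\partial\Gamma|/2^k$. We get
\[
\exp\!\Big(C\,\frac{|\partial\Gamma|}{2^k}\log\frac{8|\partial\Gamma|}{2^k}\Big)
\le \exp\!\Big(C\,\frac{|\partial\Gamma|}{2^k}\big(\log|\partial\Gamma| + C - k\log 2\big)\Big)
\le \exp\!\Big(C\,\frac{|\partial\Gamma|\,k + |\partial\Gamma|\log|\partial\Gamma|}{2^k}\Big),
\]
which together with the $L^2$ anchor factor gives exactly the bound \eqref{eq:lemma2bound}. The main obstacle I anticipate is the topological bookkeeping in the middle step: justifying that $\partial(\Gamma_k/2^k)$ decomposes into a controlled number of closed contours and that $S$ is reconstructible from them (one must be careful that the majority rule can in principle create a rescaled set that is not simply connected even when $\Gamma$ is), and making sure the ``number of contours $\le P$'' and ``$O(P^2)$ choices per contour start'' estimates are clean — but since all of these quantities are bounded by the total rescaled perimeter $P$, none of them can do more damage than the claimed $\exp(CP\log P)$, so the final bound is robust to the precise way this is carried out.
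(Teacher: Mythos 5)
Your proposal follows essentially the same route as the paper's appendix: decompose $\partial\Gamma_{k}$ into its $O(|\partial\Gamma|/2^{k})$ connected contours, anchor each one, count each as a coarse lattice path, and pay $L^{2}$ for the global position — the rescaling by $2^{k}$ versus the paper's ``segments of length $2^{k}$'' is purely cosmetic. The one point you flag yourself is the one that actually needs an argument: since $\Gamma_{k}$ can be disconnected (the paper even draws an example), the inner contours need not be enclosed by a single outer contour, so ``$O(P^{2})$ sites already enclosed'' is not available as stated; if you instead anchor each contour anywhere in $\Lambda(L)$ you get $\exp(O(P\log L))$, which is far too large. The paper's fix, which you should adopt, is the observation that every admissible square must intersect $\Gamma$, so all of $\Gamma_{k}$ (hence every contour anchor) lies in a set of diameter $O(|\partial\Gamma|)$ around the starting set; this restores the $O(P^{2}4^{k})$ anchor count per contour in the original lattice, i.e.\ $O(P^{2})$ in the rescaled one, and yields the factor $\exp\left(\const\cdot\frac{|\partial\Gamma|k+|\partial\Gamma|\log|\partial\Gamma|}{2^{k}}\right)$ exactly as claimed. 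With that containment supplied, your count is correct (indeed slightly sharper than needed).
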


We delay the proof of the second key lemma to the end of the paper.
For now, let us prove the first key lemma:

\subsection*{Proof of Key Lemma 1:}

In this proof, $\const$ will denote some constant independent of
the starting set $\Gamma$ and also independent of $k$.

We emulate the methods in the proof of proposition 1 in \cite{fisher}
for 2D. To prove (1), note that $\left|\partial\Gamma_{k}\right|$
is exactly $2^{k}$ times the number of distinct unordered pairs of
adjacent squares $s_{1},s_{2}\in\mathcal{T}_{k}$ where $s_{1}$ is
admissible and $s_{2}$ is not. That is, $\left|\Gamma\cap s_{1}\right|\geq2^{2k-1}$
and $\left|\Gamma\cap s_{2}\right|<2^{2k-1}$. Denoting $s=s_{1}\cup s_{2}$,
we get that $2^{2k-1}\leq\left|\Gamma\cap s\right|<3\cdot2^{2k-1}$.
We want to give an estimation to the length of $\left|\partial\Gamma\cap\textup{int}(s)\right|$,
where the interior is again in the Euclidean metric. That is, we want
to give an estimation of the number of length 1 edges of $\partial\Gamma$
which are strictly contained inside $s$.

Indeed, project $\Gamma\cap s_{1}$ onto the shared side of $s_{1}$
and $s_{2}$. Let $q$ denote the number of edges projected. Since
$\left|\Gamma\cap s_{1}\right|\geq2^{2k-1}$, we must have $q\geq2^{k-1}$.
Subdivide $s_{2}$ into $2^{k}$ columns of length $2^{k}$, perpendicular
to the shared side of $s_{1}$ and $s_{2}$. For one of the $q$ elements
projected, it will contribute an element of $\partial\Gamma\cap\textup{int}(s)$
if the column touching it has at least one element not in $\Gamma$.
If every column has at least one such element, we get $\left|\partial\Lambda\cap\textup{int}(s_{1}\cup s_{2})\right|\geq2^{k-1}$.
Otherwise, there is a column fully contained in $\Gamma$, denote
it by $\mathcal{R}$. We can again subdivide $s_{2}$ to $2^{k}$
``rows'' of length $2^{k}$, this time perpendicular to $\mathcal{R}$.
Each element in $\mathcal{R}$ will contribute at least one element
to $\partial\Lambda\cap\textup{int}(s_{1}\cup s_{2})$ if its corresponding
row has an element not in $\Gamma$. Since $\left|\Gamma\cap c_{2}\right|<2^{2k-1}$,
we know that there must be at least $2^{k-1}$ such rows. So again
we get $\left|\partial\Lambda\cap\textup{int}(s_{1}\cup s_{2})\right|\geq2^{k-1}$

Finally, go over all such pairs of $s_{1}$ and $s_{2}$ and count
the number of edges in $\left|\partial\Lambda\cap\textup{int}(s_{1}\cup s_{2})\right|$.
As each edge of $\partial\Gamma$ will be counted at most 4 times,
we get:
\begin{equation}
4\left|\partial\Gamma\right|\geq\sum_{s_{1},s_{2}}\left|\partial\Lambda\cap\textup{int}(s_{1}\cup s_{2})\right|\geq2^{k-1}\cdot\left[\#(s_{1},s_{2})\right]=2^{k-1}\cdot\frac{\left|\partial\Gamma_{k}\right|}{2^{k}}=\frac{1}{2}\left|\partial\Gamma_{k}\right|.\label{eq:lemma1proofeq1}
\end{equation}
Where the sum is over unordered pairs of adjacent squares $s_{1},s_{2}$
where one is admissible and the other is not, giving us our desired
result.

Now to prove part 2. It is enough to show that $\left|\Gamma_{k}\setminus\Gamma_{k-1}\right|,\left|\Gamma_{k-1}\setminus\Gamma_{k}\right|<\const\cdot2^{k}\left|\partial\Gamma\right|$.
For the first inequality, suppose $s\in\mathcal{T}_{k}$ is a square
such that $s\cap(\Gamma_{k}\backslash\Gamma_{k-1})\neq\emptyset$.
We can write $s$ as a union of four disjoint squares in $\mathcal{T}_{k-1}$.
Then at least one of these squares is not admissible, else we have
$s\subseteq\Gamma_{k-1}$. But $s$ itself must be admissible, as
$s\cap\Gamma_{k}\neq\emptyset$, so one of these four squares must
be admissible. We conclude that $s$ contains two adjacent squares
$s_{1},s_{2}\in\mathcal{T}_{k-1}$ such that $s_{1}$ is admissible
and $s_{2}$ is not. So, by the proof of part 1 of the lemma, we get
that the number of such squares $s\in\mathcal{T}_{k}$ such that $s\cap(\Gamma_{k}\backslash\Gamma_{k-1})\neq\emptyset$
is at most $\frac{1}{2^{k-1}}\left|\partial\Gamma_{k-1}\right|\leq\frac{16}{2^{k}}\left|\partial\Gamma\right|$.
Finally, we note that:

\begin{multline}
\left|\Gamma_{k}\backslash\Gamma_{k-1}\right|\leq(2^{k})^{2}\cdot(\textup{number of \ensuremath{s\in\eff_{k}}with \ensuremath{s\cap(\Gamma_{k}\backslash\Gamma_{k-1})\neq\emptyset})}<\\
<2^{2k}\cdot16\cdot2^{-k}\left|\partial\Gamma\right|=16\cdot2^{k}\left|\partial\Gamma\right|.\label{eq:lemma1proofeq2}
\end{multline}
As for $\left|\Gamma_{k-1}\backslash\Gamma_{k}\right|$, the proof
is nearly identical, as for any $s\in\mathcal{T}_{k}$ such that $s\cap(\Gamma_{k-1}\backslash\Gamma_{k})\neq\emptyset$
must too contain an admissible square in $\mathcal{T}_{k-1}$ and
an inadmissible one. Then we continue exactly as before to obtain
$\left|\Gamma_{k-1}\backslash\Gamma_{k}\right|<16\cdot2^{k}\left|\partial\Gamma\right|$,
as we wanted.

For the final part, note that $\Gamma_{k}\subseteq\bigcup_{i=2}^{k}\Gamma_{i}\backslash\Gamma_{i-1}\cup\Gamma$,
and so by \ref{eq:lemma1proofeq2}:
\begin{equation}
\left|\Gamma_{k}\right|=\sum_{i=2}^{k}\left|\Gamma_{i}\backslash\Gamma_{i-1}\right|+\left|\Gamma\right|\leq\sum_{i=2}^{k}16\cdot2^{i}\left|\partial\Gamma\right|+\left|\Gamma\right|=32\cdot2^{k}\left|\partial\Gamma\right|+\left|\Gamma\right|.\label{eq:end of lemma}
\end{equation}
\qed
\begin{rem*}
Before we continue, note that the definition of the coarse graining
is for general subsets of $\mathbb{Z}^{2}$, not just subsets of $\Lambda(L)$.
As we will be confined to $\Lambda(L)$, we shall also confine the
coarse graining to remain inside of it by intersecting the coarse
grained sets $\Gamma_{k}$ with $\Lambda(L)$. It is easy to check
that the two key lemmas still hold in this case.
\end{rem*}
\begin{rem*}
Note that the proof did not require the starting set $\Gamma$ itself
to be simply connected. Nevertheless all our applications of the lemma
will involve a simply connected $\Gamma$. In which case, we may 'improve'
the third part of the lemma to $\left|\Gamma_{k}\right|<32\cdot2^{k}\left|\partial\Gamma\right|+\left|\Gamma\right|\leq32\cdot2^{k}\left|\partial\Gamma\right|+\left|\partial\Gamma\right|^{2}$.
This modification will be useful for us later.
\end{rem*}

\subsection{Back to the Random Field}

Returning to the random field $(h_{i})$ in $\Lambda(L)$, we define
the following three events:
\begin{defn}
\label{def:sigmaL}For an integer $0<\ell<4L^{2}$, define the ``low
field in corridor'' event:

$\mathcal{E}_{k}(\ell)=\left\{ \left|\sum_{i\in\Gamma_{k+1}\backslash\Gamma_{k}}h_{i}\right|,\left|\sum_{i\in\Gamma_{k}\backslash\Gamma_{k+1}}h_{i}\right|\leq c_{\ell},\text{for all simply connected \ensuremath{\Gamma} with \ensuremath{\left|\partial\Gamma\right|=\ell}}\right\} .$

Where $1\leq k<N_{\ell}$ is an integer, and $N_{\ell}=\left\lfloor \log_{2}(\ell)\right\rfloor $,
and $c_{\ell}=\frac{J\ell}{8N_{\ell}\varepsilon}.$ In addition define:

$\mathcal{E}_{N_{\ell}}(\ell)=\left\{ \left|\sum_{i\in\Gamma_{N_{\ell}}}h_{i}\right|\leq c_{\ell},\textup{for all starting simply connected sets \ensuremath{\Gamma} with \ensuremath{\left|\partial\Gamma\right|=\ell}}\right\} ,$

$Q=\left\{ \textup{There exists a simply connected set \ensuremath{\Gamma\ni0} such that \ensuremath{\left|\sum_{i\in\Gamma}h_{i}\right|\geq\frac{J}{2\varepsilon}\left|\partial\Lambda\right|}}\right\} .$
\end{defn}

Recalling theorem\ref{thm:lowerboundRF} our goal is to show the probability
of $Q$ is low.\\
It is easy to check that:
\begin{equation}
Q^{c}\supseteq\bigcap_{\ell=1}^{4L^{2}}\bigcap_{k=1}^{N_{\ell}}\mathcal{E}_{k}(\ell),\label{eq:Qinclusion}
\end{equation}
and so:
\begin{equation}
\pp(Q)\leq\sum_{\ell=1}^{4L^{2}}\sum_{k=1}^{N_{\ell}}\pp\left(\mathcal{E}_{k}(\ell)^{c}\right).\label{eq:boundofP(Q)}
\end{equation}
So in order to prove theorem \ref{thm:lowerboundRF} our goal will
be to estimate the probabilities of the low field corridor events
$\mathcal{E}_{k}(\ell)$, and then the sum \ref{eq:boundofP(Q)}.
\begin{prop}
\label{prop:sigma prob bound}For a given simply connected set $\Gamma$,
with $\left|\partial\Gamma\right|=\ell$ then:
\[
\pp\left(\left|\sum_{i\in\Gamma_{k+1}\backslash\Gamma_{k}}h_{i}\right|\geq c_{\ell}\textup{ or }\left|\sum_{i\in\Gamma_{k}\backslash\Gamma_{k+1}}h_{i}\right|\geq c_{\ell}\right)\leq\exp\left(-\left(\frac{J}{\varepsilon}\right)^{2}\frac{\const\cdot\ell}{\log(\ell)^{2}\cdot2^{k}}\right).
\]
In addition:
\[
\pp\left(\left|\sum_{i\in\Gamma_{N_{\ell}}}h_{i}\right|\geq c_{\ell}\right)\leq\exp\left(-\left(\frac{J}{\varepsilon}\right)^{2}\frac{\const\cdot\ell}{2^{N_{\ell}}\log(\ell)^{2}}\right).
\]
\end{prop}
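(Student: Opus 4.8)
The plan is to bound the cardinalities of the ``corridors'' $\Gamma_{k+1}\setminus\Gamma_k$ and $\Gamma_k\setminus\Gamma_{k+1}$, and of the set $\Gamma_{N_\ell}$, using Key Lemma~1 (Proposition~\ref{lem:lemma 1}), and then to feed these bounds into the Hoeffding-type inequality of Lemma~\ref{lem:gaussianhoefsding}. No conceptual difficulty is involved: it is the same mechanism by which \eqref{eq:probability that the field is bigger} was obtained, now applied to a single coarse-grained layer rather than to $\Gamma$ itself. The one concentration input I would record first is that the sub-Gaussian assumption $\pp(|h_i|>t)<e^{-\half t^2}$ forces $\ee[e^{h_i^2/4}]\le 2$ (integrate the tail), so Lemma~\ref{lem:gaussianhoefsding} applies with $\psi=2$: there is a universal $c>0$ such that, for every finite $S\subseteq\zz^2$ and every $t>0$,
\[
\pp\!\left(\Big|\sum_{i\in S}h_i\Big|\ge t\right)\le\exp\!\left(-\frac{c\,t^2}{|S|}\right).
\]

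For the first inequality, fix $k\ge 1$. Part~(2) of Proposition~\ref{lem:lemma 1}, applied at index $k+1$ (permissible since $k+1>1$), gives $|\Gamma_{k+1}\setminus\Gamma_k|,\,|\Gamma_k\setminus\Gamma_{k+1}|<16\cdot 2^{k+1}\ell=32\cdot 2^{k}\ell$. Taking $t=c_\ell=\frac{J\ell}{8N_\ell\varepsilon}$ (so $t^2=\frac{(J/\varepsilon)^2\ell^2}{64\,N_\ell^2}$) and $|S|<32\cdot 2^k\ell$ in the concentration bound estimates each of the two probabilities by $\exp\!\big(-\const\,(J/\varepsilon)^2\ell\,/\,(N_\ell^2\,2^k)\big)$. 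A union bound over the two events, together with $N_\ell=\lfloor\log_2\ell\rfloor\le\log_2\ell$ (hence $N_\ell^2\le\const\log(\ell)^2$), yields the claim, the factor $2$ being absorbed into $\const$.

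For the second inequality I would bound $|\Gamma_{N_\ell}|$ by combining part~(3) of Proposition~\ref{lem:lemma 1}, $|\Gamma_{N_\ell}|<32\cdot 2^{N_\ell}\ell+|\Gamma|$, with the isoperimetric bound $|\Gamma|\le\const\,\ell^2$ (valid since $|\partial\Gamma|\ge\fourth\sqrt{|\Gamma|}$); since $N_\ell=\lfloor\log_2\ell\rfloor$ forces $\ell<2\cdot 2^{N_\ell}$, both terms are $\le\const\,\ell\,2^{N_\ell}$. Inserting $|S|\le\const\,\ell\,2^{N_\ell}$ and $t=c_\ell$ into the concentration bound gives $\exp\!\big(-\const\,(J/\varepsilon)^2\ell\,/\,(N_\ell^2\,2^{N_\ell})\big)\le\exp\!\big(-\const\,(J/\varepsilon)^2\ell\,/\,(\log(\ell)^2\,2^{N_\ell})\big)$, which is exactly the second assertion.

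I do not anticipate a genuine obstacle. The only points needing care are (i) quoting part~(2) of Proposition~\ref{lem:lemma 1} at the shifted index $k+1$, which is precisely why the statement restricts to $k\ge 1$, and (ii) the passage from $N_\ell$ to $\log\ell$ in the exponent together with the bound $|\Gamma_{N_\ell}|\lesssim\ell\,2^{N_\ell}$ (which needs the isoperimetric estimate to absorb the $|\Gamma|$ term from part~(3)). Both are elementary, and all remaining manipulations are the constant-juggling already flagged in the running convention on $\const$.
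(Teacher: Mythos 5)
Your proof is correct and follows essentially the same route as the paper: apply the sub-Gaussian Hoeffding bound with $t=c_\ell$ to the corridors $\Gamma_{k+1}\setminus\Gamma_k$, $\Gamma_k\setminus\Gamma_{k+1}$ (resp.\ to $\Gamma_{N_\ell}$), bound their cardinality by $\const\cdot 2^k\ell$ via part (2) (resp.\ part (3) together with the isoperimetric estimate $|\Gamma|\le\const\ell^2$ and $\ell\le 2\cdot 2^{N_\ell}$) of Key Lemma 1, take a union bound over the two events, and convert $N_\ell$ to $\log\ell$. Your explicit verification that the tail assumption gives $\ee[e^{h_i^2/4}]\le 2$, and your use of the concentration inequality in the form $\exp(-ct^2/|S|)$, are precisely what the paper's computation relies on.
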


\begin{proof}
By \ref{eq:probability that the field is bigger} and the second part
of the first key lemma, when $k<N_{\ell}$:
\begin{multline}
\pp\left(\left|\sum_{i\in\Gamma_{k+1}\backslash\Gamma_{k}}h_{i}\right|\geq c_{\ell}\right)\leq\exp\left(-c_{\ell}^{2}\cdot\frac{\const}{\left|\Gamma_{k+1}\backslash\Gamma_{k}\right|}\right)\leq\exp\left(-c_{\ell}^{2}\cdot\frac{\const}{2^{k}\cdot\ell}\right)\leq\\
\leq\exp\left(-\left(\frac{J}{\varepsilon}\right)^{2}\left(\frac{\ell}{\log(\ell)}\right)^{2}\frac{\const}{2^{k}\cdot\ell}\right)=\exp\left(-\left(\frac{J}{\varepsilon}\right)^{2}\frac{\const\cdot\ell}{\log(\ell)^{2}\cdot2^{k}}\right),\label{eq:sigma k lower than N}
\end{multline}
and the same holds for $\pp\left(\left|\sum_{i\in\Gamma_{k+1}\backslash\Gamma_{k}}h_{i}\right|\geq c_{\ell}\right)$,
hence the union bound gives us the first part of the proposition.

When $k=N_{\ell}$, by the third part of the first key lemma: 
\begin{multline}
\pp\left(\left|\sum_{i\in\Gamma_{N_{\ell}}}h_{i}\right|\geq c_{\ell}\right)\leq\exp\left(-c_{\ell}^{2}\cdot\frac{\const}{\left|\Gamma_{N_{\ell}}\right|}\right)\leq\exp\left(-c_{\ell}^{2}\cdot\frac{\const}{2^{N_{\ell}}\ell+\ell^{2}}\right)\leq\\
\leq\exp\left(-\left(\frac{J}{\varepsilon}\right)^{2}\left(\frac{\ell}{\log(\ell)}\right)^{2}\frac{\const}{2^{N_{\ell}}\ell+\ell^{2}}\right)\leq\exp\left(-\left(\frac{J}{\varepsilon}\right)^{2}\frac{\const\cdot\ell}{2^{N_{\ell}}\log(\ell)^{2}}\right).\label{eq:sigma k on Nl}
\end{multline}
\end{proof}
From here, we can find the probabilities of the $\mathcal{E}_{k}(\ell)$
events:
\begin{prop}
\label{prop:sigma event prob} For all $\ell,k\leq N_{\ell}$, then
\begin{equation}
\pp\left(\sige_{k}(\ell)^{c}\right)\leq L^{2}\exp\left(\frac{\ell\log(\ell)}{2^{k}}-\left(\frac{J}{\varepsilon}\right)^{2}\frac{\const\cdot\ell}{2^{k}\log(\ell)^{2}}\right).\label{eq:sigma k prob bound}
\end{equation}
\end{prop}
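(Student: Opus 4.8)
The plan is to derive Proposition~\ref{prop:sigma event prob} from the single-set estimate of Proposition~\ref{prop:sigma prob bound} together with the counting bound of Key Lemma~2 (Proposition~\ref{lem:lemma 2}), via a union bound. The crucial observation is that although the event $\sige_{k}(\ell)^{c}$ quantifies over the exponentially-many simply connected $\Gamma\subseteq\Lambda(L)$ with $|\partial\Gamma|=\ell$, the two field sums appearing in the definition of $\sige_{k}(\ell)$ depend on $\Gamma$ only through the pair of coarse-grained sets $(\Gamma_{k},\Gamma_{k+1})$ (and, in the boundary case $k=N_{\ell}$, only through $\Gamma_{N_{\ell}}$). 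Hence it suffices to union-bound over the far smaller family of possible such pairs.

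First I would treat the generic range $1\le k<N_{\ell}$. The ``corridor'' regions $\Gamma_{k+1}\setminus\Gamma_{k}$ and $\Gamma_{k}\setminus\Gamma_{k+1}$ are determined by the pair $(\Gamma_{k},\Gamma_{k+1})$. By Proposition~\ref{lem:lemma 2}, the number of sets $\Gamma_{k}$ that can arise from a simply connected $\Gamma\subseteq\Lambda(L)$ with $|\partial\Gamma|=\ell$ is at most $L^{2}\exp(\const\cdot(\ell k+\ell\log\ell)/2^{k})$, and similarly for $\Gamma_{k+1}$; since both are coarse grainings of the \emph{same} $\Gamma$, their common location contributes only a single factor $L^{2}$, so the number of realizable pairs --- hence of distinct corridor regions --- is at most $L^{2}\exp(\const\cdot(\ell k+\ell\log\ell)/2^{k})$ after adjusting $\const$ and using $k\ge1$. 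For each fixed corridor, Proposition~\ref{prop:sigma prob bound} bounds the probability that either of its two field sums exceeds $c_{\ell}$ by $\exp(-(J/\varepsilon)^{2}\const\cdot\ell/(2^{k}\log(\ell)^{2}))$. A union bound then gives
\[
\pp(\sige_{k}(\ell)^{c})\le L^{2}\exp\!\left(\frac{\const\cdot(\ell k+\ell\log\ell)}{2^{k}}-\left(\frac{J}{\varepsilon}\right)^{2}\frac{\const\cdot\ell}{2^{k}\log(\ell)^{2}}\right),
\]
and since $k\le N_{\ell}\le\log_{2}\ell$ we have $\ell k+\ell\log\ell\le\const\cdot\ell\log\ell$, which yields the stated inequality after absorbing universal constants into the first term of the exponent.

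The boundary case $k=N_{\ell}$ is handled in the same way, only more simply: now $\sige_{N_{\ell}}(\ell)$ depends on $\Gamma$ through the single coarse-grained set $\Gamma_{N_{\ell}}$, whose number of realizations is again controlled by Proposition~\ref{lem:lemma 2} (at level $N_{\ell}$), and the per-set probability is bounded by the second estimate of Proposition~\ref{prop:sigma prob bound}; combining the two gives the claimed bound directly, with no pair to count.

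The main obstacle --- and essentially the only place any care is needed --- is the counting step: one must make sure that replacing the starting set $\Gamma$ by the pair $(\Gamma_{k},\Gamma_{k+1})$ does not quietly reintroduce a factor exponential in $\ell$ (or in $L$). This is precisely what Key Lemma~2 is designed to prevent, and the point is to apply it at both scales $k$ and $k+1$ while charging the positional factor $L^{2}$ only once, using that the two coarse grainings are built from a common $\Gamma$. Everything else --- converting the area bounds of Key Lemma~1 (Proposition~\ref{lem:lemma 1}) into the per-region sub-Gaussian tail, and simplifying $\ell k+\ell\log\ell$ via $k\le N_{\ell}$ --- is routine, and the first of these has already been carried out inside Proposition~\ref{prop:sigma prob bound}.
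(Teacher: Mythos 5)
Your proposal is correct and follows essentially the same route as the paper: a union bound over the possible corridor regions, counted via Key Lemma~2, combined with the per-set tail estimate of Proposition~\ref{prop:sigma prob bound}, then simplifying $\ell k+\ell\log\ell$ using $k\le N_{\ell}$. Your explicit remark that the positional factor $L^{2}$ should be charged only once when counting the pair $(\Gamma_{k},\Gamma_{k+1})$ is a point the paper asserts without comment, so if anything your write-up is slightly more careful on the counting step.
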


\begin{proof}
From the second key lemma, we know that the number of pairs $\Gamma_{k},\Gamma_{k-1}$
when $\Gamma$ is a starting set with $\left|\partial\Gamma\right|=\ell$
is at most $L^{2}\exp\left(\const\cdot\frac{\ell k+\ell\log(\ell)}{2^{k}}\right)$,
so by the union bound and proposition \ref{prop:sigma prob bound}
we get that whenever $k<N_{\ell}$
\begin{align}
\pp\left(\sige_{k}(\ell)^{c}\right) & \leq L^{2}\exp\left(\const\cdot\frac{\ell k+\ell\log(\ell)}{2^{k}}\right)\exp\left(-\left(\frac{J}{\varepsilon}\right)^{2}\frac{\const\cdot\ell}{\log(\ell)^{2}\cdot2^{k}}\right)\leq\label{eq:sigma k prob bound proof}\\
 & \leq L^{2}\exp\left(\const\frac{\ell\log(\ell)}{2^{k}}-\left(\frac{J}{\varepsilon}\right)^{2}\frac{\const\cdot\ell}{2^{k}\log(\ell)^{2}}\right).\nonumber 
\end{align}
For $k=N_{\ell}$, the proof is identical, as the number of possible
sets $\Gamma_{k}$ is still at most $L^{2}\exp\left(\const\cdot\frac{\ell\log(\ell)}{2^{k}}\right)$
.
\end{proof}
We are now ready to prove theorem \ref{thm:lowerboundRF}:

\subsubsection*{Proof of Theorem \ref{thm:lowerboundRF}:}

Reusing the event $Q$ from definition \ref{def:sigmaL}, we deduce
from proposition \nameref{prop:sigma event prob} that:
\begin{multline}
\pp(Q^{c})\leq\sum_{\ell=1}^{4L^{2}}\sum_{k=1}^{N_{\ell}}\pp\left(\mathcal{E}_{k}(\ell)^{c}\right)\leq\\
\leq\sum_{\ell=1}^{4L^{2}}\sum_{k=1}^{N_{\ell}}L^{2}\exp\left(\const\frac{\ell\log(\ell)}{2^{k}}-\left(\frac{J}{\varepsilon}\right)^{2}\frac{\const\cdot\ell}{2^{k}\log(\ell)^{2}}\right)\\
\leq\sum_{\ell=1}^{4L^{2}}\sum_{k=1}^{N_{\ell}}L^{2}\exp\left(\frac{\ell}{2^{k}}\left[\const\log(\ell)-\left(\frac{J}{\varepsilon}\right)^{2}\frac{\const}{\log(\ell)^{2}}\right]\right).\label{eq:prob Q bound 1}
\end{multline}
For appropriate constants, when $L\leq\exp\left(\const\left(\frac{J}{\varepsilon}\right)^{\frac{2}{3}}\right)$,
then $\log(\ell)\leq\const\left(\frac{J}{\varepsilon}\right)^{\frac{2}{3}}$
whenever $\ell\leq4L^{2}$ , and so $\const\log(\ell)-\left(\frac{J}{\varepsilon}\right)^{2}\frac{\const}{\log(\ell)^{2}}<0$.
Therefore:
\begin{multline}
\sum_{\ell=1}^{4L^{2}}\sum_{k=1}^{N_{\ell}}L^{2}\exp\left(\frac{\ell}{2^{k}}\left[\const\log(\ell)-\left(\frac{J}{\varepsilon}\right)^{2}\frac{\const}{\log(\ell)^{2}}\right]\right)\leq\\
\leq\sum_{\ell=1}^{4L^{2}}\sum_{k=1}^{N_{\ell}}L^{2}\exp\left(\left[\const\log(L)-\left(\frac{J}{\varepsilon}\right)^{2}\frac{\const}{\log(L)^{2}}\right]\right)\\
=4L^{4}\log(L)\exp\left(\left[\const\log(L)-\left(\frac{J}{\varepsilon}\right)^{2}\frac{\const}{\log(L)^{2}}\right]\right).\label{eq:prob Q bound 2}
\end{multline}
We conclude that for any $\delta>0$, we may find an appropriate constant
$C=C(\delta)$ such that when $L\leq\exp\left(\const\left(\frac{J}{\varepsilon}\right)^{\frac{2}{3}}\right)$,
then $\pp(Q)<\delta$.\qed

\subsection{Closing Arguments}

We are now ready to prove Theorem \ref{thm:lowerbound}:
\begin{proof}
Let $\delta>0$. From Theorem \ref{thm:lowerboundRF}, we know that
there exists a constant $c=c(\delta)>0$ such that whenever $L<\exp\left(c(\frac{J}{\varepsilon})^{\frac{2}{3}}\right)$,
then:
\begin{equation}
\pp\left(\textup{There exists a simply connected set \ensuremath{\Gamma\subseteq\Lambda(L)} on which \ensuremath{\left|\sum_{i\in\Gamma}h_{i}\right|\geq\frac{J}{2\varepsilon}\left|\partial\Gamma\right|}}\right)<\delta.\label{eq:prob is low for high field set}
\end{equation}
Suppose a configuration $\sigma:\Lambda(L)\rightarrow\{-1,+1\}$ is
not all $+1$. Then there must be a simply connected subset $\Gamma\subset\Lambda(L)$
with a constant sign for $\sigma$ and disagreements on the boundary.
Indeed, start at 0. If the component containing $v_{0}=0$ in $\sigma$
is simply connected, we are done. Otherwise, the component containing
$0$ has a hole. Pick any vertex $v_{1}$ in that hole. If the component
in $\sigma$ containing $v_{1}$ is simply connected, we are done.
Otherwise, that component has a hole and we can pick a vertex $v_{2}$
in it. We repeat until we reach a simply connected component $\Gamma$
with constant signs and disagreements on the boundary. The process
must end, otherwise we would get an infinite sequence of vertices
in the finite graph $\Lambda(L)$.

Finally, should the complement of the event in \ref{eq:prob is low for high field set}
hold, there cannot be a $-1$ in the ground state configuration $\sigma$.
Indeed, if there was a $-1$, by the above argument, there would be
a simply connected set $\Gamma$ with a constant sign and disagreements
on the boundary. Then we can flip the signs on $\Gamma$ to reduce
the energy by $J\left|\partial\Gamma\right|$, but add at most $\varepsilon\left|\sum_{i\in\Gamma}h_{i}\right|$
to the energy. As the complement of the even in \ref{eq:prob is low for high field set}
holds, we know that by doing this we must reduce the energy, as $J\left|\partial\Gamma\right|>\varepsilon\left|\sum_{i\in\Gamma}h_{i}\right|$.
This will give us a new configuration $\sigma'$ with lower energy,
a contradiction to $\sigma$ being the ground state.

In total, for $L<\exp\left(c(\frac{J}{\varepsilon})^{\frac{2}{3}}\right)$
it must hold with probability higher than $1-\delta$ that the ground
state will be the constant state $\sigma=+1$.
\end{proof}

\appendix

\section{\label{sec:Appendix:-Proving-the}Appendix: Proving the Second Key
Lemma}

In this appendix, we adapt the proof given in the Appendix of \cite{fisher}
to the case of two dimensions in order to prove Proposition \ref{lem:lemma 2}.

Let $\Gamma\subseteq\zz^{2}$ be a simply connected set containing
the origin $0$. Note that while the original set $\Gamma$ and its
boundary $\partial\Gamma$ are both connected, the coarse grained
sets $\Gamma_{k}$ and hence their boundaries $\partial\Gamma_{k}$
need not be connected. For example:
\begin{figure}
\centering{}\includegraphics[scale=0.45]{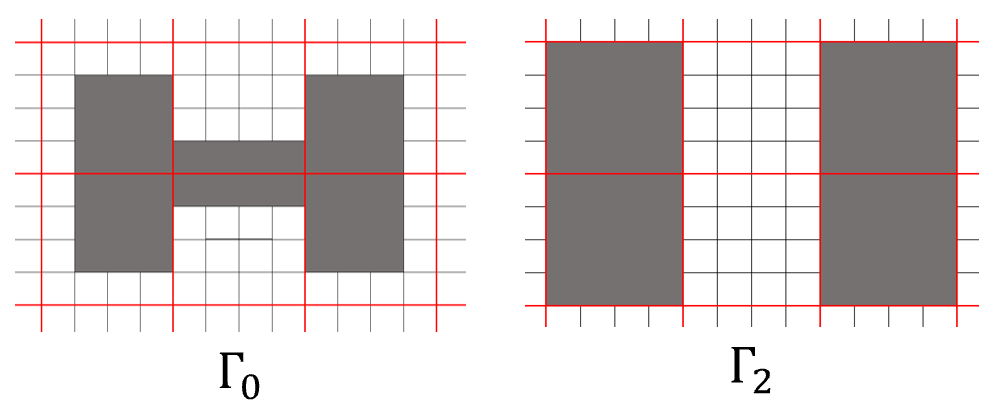}\caption{An example of a starting set $\Gamma_{0}$ for which the coarse grained
set $\Gamma_{2}$ is disconnected.}
\end{figure}

Nevertheless, $\partial\Gamma_{k}$ must have no more than $\frac{8\ell}{2^{k}}$
connected components when $\left|\partial\Gamma\right|=\ell$. Indeed,
each connected component of $\partial\Gamma_{k}$ must contain at
least two lines of length $2^{k}$. But from the first key lemma,
we know that $\left|\partial\Gamma_{k}\right|\leq8\left|\partial\Gamma\right|=8\ell$.
So the number of connected components of $\partial\Gamma_{k}$ must
indeed by no more than $\frac{4\ell}{2^{k}}$. We write $\partial\Gamma_{k}=\biguplus_{i}\gamma_{i}$,
where $\gamma_{i}$ are the connected components of $\partial\Gamma_{k}$.
Additionally denote $\alpha=\alpha(\ell):=\frac{4\ell}{2^{k}}$.

For a fixed set of points $x_{1},x_{2},..,x_{\alpha}\in\zz^{2}$,
and integer lengths $l_{1},..,l_{\alpha}\geq0$, $l_{1}+\cdots+l_{\alpha}\leq\frac{8\ell}{2^{k}}$,
define a function:
\begin{equation}
F_{\ell,k}(\{x_{i},l_{i}\}):=\left(\begin{array}{c}
\textup{Number of initial sets \ensuremath{0\in\Gamma} with \ensuremath{\left|\partial\Gamma\right|=\ell,}and for which \ensuremath{\left|\gamma_{i}\right|=2^{k}l_{i}},}\\
\textup{ and either \ensuremath{x_{i}}\ensuremath{\in\gamma_{i}} or \ensuremath{\gamma_{i}} is empty for each \ensuremath{i}}
\end{array}\right).\label{eq:F definition}
\end{equation}

Then the number of possible $\gamma_{i}$ with length $2^{k}l_{i}$
containing an arbitrary $x_{i}$ is at most $\exp(\const\cdot l_{i})$,
as $\gamma_{i}$ is comprised of $l_{i}$ segments of length $2^{k}$.
So as there are at most $\exp(\const\cdot l_{i})$ ways to pick those
segments. Therefore:
\begin{equation}
F_{\ell,k}(\{x_{i},l_{i}\})\leq\exp\left(\const\cdot\sum l_{i}\right)\leq\exp\left(\const\frac{\ell}{2^{k}}\right).\label{eq:F inequality}
\end{equation}

It now remains to bound the number of possible $l_{i}$'s and $x_{i}$'s.
First, for $l_{i}$, we know from basic combinatorics that the number
of ways to sum $\alpha$ positive integers to get a result less than
$\const\cdot\frac{\ell}{2^{k}}$ is at most $2^{\const\cdot\frac{\ell}{2^{k}}}.$
Now for the $x_{i}$'s. For them, note that from Proposition \ref{lem:lemma 1}
that if we want $F_{\ell,k}(\{x_{i},l_{i}\})\neq0$, we must have
that all the $x_{i}$'s are all contained in a set of diameter $\leq16\cdot\ell$
centered around the origin. Therefore, the number of $x_{i}$'s is
bounded by:
\begin{equation}
\binom{\const\cdot\ell^{2}}{\alpha}\leq\left(\const\cdot\frac{\ell^{2}}{\alpha}\right)^{\alpha}=\left(\const\cdot2^{k}\ell\right)^{\const\cdot\frac{\ell}{2^{k}}}\leq\exp\left(\const\cdot\frac{\ell\log(\ell)+\ell k}{2^{k}}\right).\label{eq:binom inequality}
\end{equation}
So at last, we get that the number of coarse grained sets $\Lambda_{k}$
starting from a set $0\in\Gamma\subseteq\zz^{2}$ with $\left|\partial\Gamma\right|=\ell$,
will be at most (by the above inequalities):
\begin{align}
\left(\textup{number of \ensuremath{l_{i}}'s}\right)\cdot\left(\textup{number of \ensuremath{x_{i}}'s}\right)\cdot\left(\textup{maximal value of \ensuremath{F_{\ell,k}}}\right) & \leq\label{eq:finalinequality}\\
\leq\exp\left(\const\frac{\ell}{2^{k}}\right)\cdot\exp\left(\const\cdot\frac{\ell\log(\ell)+\ell k}{2^{k}}\right).\exp\left(\const\cdot\frac{k\ell}{2^{k}}\right)\leq\exp\left(\const\cdot\frac{k\ell+\ell\log(\ell)}{2^{k}}\right).\nonumber 
\end{align}

To conclude the proof, we note that this bounds works for any starting
point in a set $x_{0}\in\Gamma$. So by going over all the $L^{2}$
in the $L\times L$ square, we get (\ref{eq:lemma2bound}).\qed

\bibliographystyle{plain}
\nocite{*}
\bibliography{ising}

\begin{thebibliography}{10}

\bibitem{aizemanburchard}
Michael Aizenman and Almut Burchard.
\newblock H\"older regularity and dimension bounds for random curves.
\newblock {\em Duke Math. J. v. 99, 419 (1999)}, January 1998.

\bibitem{expdecay}
Michael Aizenman, Matan Harel, and Ron Peled.
\newblock Exponential decay of correlations in the 2{D} random field {I}sing
  model.
\newblock {\em Journal of Statistical Physics}, pages 1--28, 2019.

\bibitem{aizenman2019power}
Michael Aizenman and Ron Peled.
\newblock A power-law upper bound on the correlations in the 2{D} random field
  {I}sing model.
\newblock {\em Communications in Mathematical Physics}, 372(3):865--892, 2019.

\bibitem{aizenman1989roundingPRL}
Michael Aizenman and Jan Wehr.
\newblock Rounding of first-order phase transitions in systems with quenched
  disorder.
\newblock {\em Physical review letters}, 62(21):2503, 1989.

\bibitem{aizenman1990roundingCMP}
Michael Aizenman and Jan Wehr.
\newblock Rounding effects of quenched randomness on first-order phase
  transitions.
\newblock {\em Communications in mathematical physics}, 130(3):489--528, 1990.

\bibitem{berretti1985some}
Alberto Berretti.
\newblock Some properties of random {I}sing models.
\newblock {\em Journal of statistical physics}, 38(3):483--496, 1985.

\bibitem{bricmont1988hierarchical}
Jean Bricmont and Antti Kupiainen.
\newblock The hierarchical random field {I}sing model.
\newblock {\em Journal of Statistical Physics}, 51(5):1021--1032, 1988.

\bibitem{bricmont1988phase}
Jean Bricmont and Antti Kupiainen.
\newblock Phase transition in the 3d random field {I}sing model.
\newblock {\em Communications in mathematical physics}, 116(4):539--572, 1988.

\bibitem{camia2018note}
Federico Camia, Jianping Jiang, and Charles~M Newman.
\newblock A note on exponential decay in the random field {I}sing model.
\newblock {\em Journal of Statistical Physics}, 173(2):268--284, 2018.

\bibitem{chatterjee2018decay}
Sourav Chatterjee.
\newblock On the decay of correlations in the random field {I}sing model.
\newblock {\em Communications in Mathematical Physics}, 362(1):253--267, 2018.

\bibitem{ding2020correlation}
Jian Ding and Mateo Wirth.
\newblock Correlation length of two-dimensional random field {I}sing model via
  greedy lattice animal.
\newblock {\em arXiv preprint arXiv:2011.08768}, 2020.

\bibitem{ding2019exponentialpostemp}
Jian Ding and Jiaming Xia.
\newblock Exponential decay of correlations in the two-dimensional random field
  {I}sing model at positive temperatures.
\newblock {\em arXiv preprint arXiv:1905.05651}, 2019.

\bibitem{ding2019exponentialzerotemp}
Jian Ding and Jiaming Xia.
\newblock Exponential decay of correlations in the two-dimensional random field
  {I}sing model at zero temperature.
\newblock {\em arXiv preprint arXiv:1902.03302}, 2019.

\bibitem{fisher}
Daniel~S Fisher, J{\"u}rg Fr{\"o}hlich, and Thomas Spencer.
\newblock The {I}sing model in a random magnetic field.
\newblock {\em Journal of Statistical Physics}, 34(5):863--870, 1984.

\bibitem{frohlich1984improved}
J{\"u}rg Fr{\"o}hlich and John~Z Imbrie.
\newblock Improved perturbation expansion for disordered systems: beating
  {G}riffiths singularities.
\newblock {\em Communications in mathematical physics}, 96(2):145--180, 1984.

\bibitem{imbrie1985ground}
John~Z Imbrie.
\newblock The ground state of the three-dimensional random-field {I}sing model.
\newblock {\em Communications in mathematical physics}, 98(2):145--176, 1985.

\bibitem{imrymaargument}
Yoseph Imry and Shang-keng Ma.
\newblock Random-field instability of the ordered state of continuous symmetry.
\newblock {\em Physical Review Letters}, 35(21):1399, 1975.

\bibitem{vershynin2018high}
Roman Vershynin.
\newblock {\em High-dimensional probability: An introduction with applications
  in data science}, volume~47.
\newblock Cambridge university press, 2018.

\end{thebibliography}

\end{document}